\definecolor{bf}{rgb}{0,0,0.6} 
\definecolor{mygray}{gray}{0.85}
\definecolor{darkWhite}{rgb}{0.94,0.94,0.94}
\newcommand{\espcond}[2]{\mathbb{E}\mathopen{}\left[#1\middle|#2\right]}
\newcommand{\esp}[1]{\mathbb{E}\mathopen{}\left[#1\right]}
\newcommand{\reels}{\mathbb{R}}
\newcommand{\tor}{\mathbb{T}}
\newcommand{\FuncDef}[4]{\ensuremath{\left\{\begin{array}{c} #1\longrightarrow #2\\ #3\mapsto #4\\\end{array}\right.}}
\newcommand{\noise}{\theta}
\newcommand{\noiseb}{\tilde{\noise}}
\numberwithin{equation}{section}
\newtheorem{thm}{Theorem}[section]
\newtheorem{definition}[thm]{Definition}
\newtheorem{prop}[thm]{Proposition}
\newtheorem{lemma}[thm]{Lemma}
\newtheorem{hyp}[thm]{Hypothesis}
\newtheorem{corol}[thm]{Corollary}
\theoremstyle{definition}
\newtheorem{remarque}[thm]{Remark}
\newtheorem{exemple}[thm]{Example}
\title{A study of common noise in mean field games}
\author{Charles Bertucci, Charles Meynard}
\begin{document}

\begin{abstract}
  This paper is concerned with the study of mean field games master equation involving an additional variable modelling common noise. We address cases in which the dynamics of this variable can depend on the state of the game, which requires in general additional monotonicity assumptions on the coefficients. We explore the link between such a common noise and more traditional ones, as well as the links between different monotone regimes for the master equation.
\end{abstract}

\maketitle
\setcounter{tocdepth}{1}
\tableofcontents
\section{Introduction}

\subsection{General introduction}
This article is concerned with the study of mean field games (MFG) master equations with a source of common randomness affecting all players. Master equations which are non-linear partial differential equations (PDEs) on the space of probability measures, are usually satisfied by the value function of MFGs. The existence of such a value over arbitrary long intervals of time is in general not clear, as solutions may blow-up in finite time. However, it appears as a natural tool to study MFG, namely in the presence of so-called common noise, whether it be exogenous as in \cite{mfg-common-noise} or coming from a major player as in \cite{Carmona-major-minor,MFG-major-Lions}. The source of noise we consider in this article belongs to neither of those two categories: we investigate situations in which the dynamics of players is affected by an endogenous noise process, which is to say that the dynamic of common noise is affected by the state of the game. We believe it to be natural in many applications \cite{bitcoin-mining,trade-crowding}. This noise process can model environmental factors or even an index followed by all players. 

The existence of a solution to the master equation is intrinsically linked to uniqueness properties of equilibria to the MFG, the very concept of value being undefined otherwise. Uniqueness almost always arises from monotonicity conditions. We investigate the wellposedness of master equations with endogenous common noise under two different types of monotonicity which are most frequently studied: flat monotonicity and displacement monotonicity.

Mean field games were introduced by Lasry and Lions in \cite{LasryLionsMFG,Lions-college}. First through a coupled forward backward system of PDEs and then with the master equation for games with common noise \cite{Lions-college}. The existence of smooth solutions to this equation was obtained in \cite{convergence-problem} under what we call flat monotonicity in this paper (and is often referred to as the Lasry-Lions monotone regime). In a series of paper \cite{disp-monotone-2,globalwellposednessdisplacementmonotone,disp-monotone-1} wellposedness of the master equation was also obtained with purely probabilistic arguments under different monotonicity assumptions the author called displacement monotonicity. Other regime of wellposedness have been identified, most notably the potential regime \cite{cecchin2022weak} in which the master equation can be integrated into a HJB equation. Uniqueness can also be recovered under different monotonicity conditions \cite{OnMonotonicityMFG} that we do not study here.  There has also been a great deal of interest in the definition of weak solutions to MFG. Whether it be at the level of the MFG system \cite{weak-sol-seeger,weaksol-dipmono} or directly on the master equation \cite{ bertucci-monotone,JEP_2021__8__1099_0,lipschitz-sol,cardaliaguet-monotone}, the wellposedness of smooth solutions imposing very strong restrictions on the behavior of the data with respect to the measure argument. In this article, we shall use the concept of Lipschitz solution introduced in \cite{lipschitz-sol}.

Concerning MFG with common noise, we refer the reader to \cite{mfg-common-noise,Lions-college}. A usual modelling of common noise is to linearly impact the state of all players by a random shift. This gives rise to the presence of a second order infinite dimensional term in the master equation \cite{convergence-problem}. The common noise we here model does not raise such difficulty, as it acts on the dynamics of players through a finite dimensional parameter. Such modelling has seen some interest in the literature \cite{bertucci-monotone} especially in applications of MFG to real world phenomenon \cite{bitcoin-mining,trade-crowding}. In \cite{noise-add-variable}, monotonicity conditions were given for the wellposedness of the corresponding problem in finite state space MFG. More recently, some wellposedness results were given in \cite{tangpi2024meanfieldgamescommon} for situations in which the noise and control of players are decoupled.

Let us finally comment on one main problem in MFG theory that is closely related to the wellposedness of the master equation but that we do not treat in this article: the convergence of the corresponding $N-$players game to the MFG system \cite{convergence-problem}. A possible motivation for studying MFG is the definition of approximate controls for game with a finite (but large) number of players \cite{Lacker-cv-equilibria}.

\subsection{Setting and motivation of our study}
To introduce the equation we study in this paper, we start by presenting a MFG. In MFGs, a continuum of players plays a differential games during which they interact with each others through their distribution. 
Given a probability flow $(m_t)_{t\in[0,T]}$ for the distribution of players, the noise process $(\theta_t)_{t\geq 0}$, observed by all players, evolves according to a stochastic differential equation, and a single player of state $(X_t)_{t\geq 0}$ choose their control $(\alpha_t)_{t\in[0,T]}$ in such a way that it solves the following optimal control problem 
\begin{gather}
\label{optimal control problem of a player}
\left\{
\begin{array}{c}
\displaystyle\underset{\alpha}{\sup}\quad \esp{U_0(X^\alpha_T,m_T)-\int_0^T L(X_s^\alpha,\theta_t,\alpha_s,m_s)ds},\\
dX_t^\alpha=-h(X_t^\alpha,\theta_t,\alpha_t,m_t) dt+\sqrt{2\sigma_x} dW_t+\sqrt{2\beta} dB^c_t,\\
d\theta_t= -b(\theta_t,m_t)dt+\sqrt{2\sigma_\theta}dB^\theta_t,
\end{array}
\right.
\end{gather}
for some coefficients $U_0,L,h,b$ and $(W_s)_{s\geq 0}$ a Brownian motion independent of $(B^c_s,B^\theta_s)_{s\geq 0}$. The process $(B^c_s,B^\theta_s)_{s\geq 0}$ is a Brownian motion shared among all players and is the source of common noise associated to the filtration $\mathcal{F}_t=\sigma\left((B^c_s,B^\theta_s)_{s\leq t}\right)$. The Brownian motion $(B^c_s)_{s\geq 0}$ is associated to a standard modelling of common randomness in MFG that acts by translating the state of all players with the same shift, see \cite{mfg-common-noise,Lions-college}. In what follows, we refer to this type of noise as additive common noise. In contrast, $(W_t)_{t\geq 0}$ correspond to the noise that is unique to each player. All players are affected by different Brownian motions independent of each other, this individual noise is usually called idiosyncratic noise.  The main novelty of this paper is the source of noise $(B^\theta_s)_{s\geq 0}$ and the associated stochastic process $(\theta_s)_{s\geq 0}$, common to all players and whose randomness has a direct effect on the coefficients of the game.

A mean field equilibrium, whenever it exists, is characterized by a control $(\alpha^*_t)_{t\in [0,T]}$, optimal for \eqref{optimal control problem of a player} such that 
\[m_t=\mathcal{L}\left(X^{\alpha^*}_t|\mathcal{F}_t\right).\]
The former being equivalent to writing that players are indeed distributed along $m_t$ for each realization of the common noise. Due to the presence of common noise, expressing this equilibrium in terms of a forward-backward system is depreciated, and we focus rather on the master equation 
\begin{equation}
\label{eq: ME with both common noise}
\left\{
\begin{array}{c}
\displaystyle -\partial_t U+H(x,\noise,m,\nabla_xU(t,x,\noise,m))+b(\noise,m)\cdot\nabla_\noise U-\sigma_\noise\Delta_\noise U\\
\displaystyle -(\sigma_x+\beta)\int \text{div}_{y}[D_mU](t,x,\noise,m,y)m(dy)-(\sigma_x+\beta)\Delta_x U\\
\displaystyle+\int D_m U(t,x,\noise,m,y)\cdot D_p H(y,\noise,m,\nabla_x U(t,y,\theta,m))m(dy)\\
\displaystyle -2\beta \int \text{div}_{x}[D_m U](t,x,m,y)]m(dy)-\beta \int \text{Tr}\left[D^2_{mm}U(t,x,m,y,y')\right]m(dy)m(dy')=0,\\
\text{ for } (t,x,\noise,m) \in (0,T)\times \reels^d \times\reels^n\times  \mathcal{P}(\reels^d),\\
U(T,x,m)=U_0(x,\noise,m) \text{ for } (x,\noise,m) \in \reels^d\times\reels^n \times  \mathcal{P}(\reels^d).
\end{array}
\right.
\end{equation}
The solution $U$ of this equation is the value function of the game, the derivation of this equation from the control problem is done explicitly in  \cite{derivation-ME,Lions-college}. The hamiltonian
\[H(x,\theta,m,p)=\underset{\alpha}{\inf}\left\{L(x,\theta,\alpha,m)+h(x,\theta,\alpha,m)\cdot p\right\}\]
comes from the optimization problem each player solves against the crowd of others players. The term $D_m U$ refers to the derivative of $U$ with respect to the measure argument (more precise statements will be made on this notion later on). The presence of terms involving this derivative reflects the evolution of the distribution of players throughout the game. Finally, $U(t,x,\noise,m)$ is the value of the game for a player in state $x$ whenever the time elapsed since the beginning of the game is $t$, the distribution of all players is $m$ and the realization of the exterior stochastic process is $\noise$. Whenever $U$ does not depend on $\noise$ we get back the typical master equation associated to a mean field game, see for instance \cite{convergence-problem}. Let us comment a bit more on modelling concerns behind this variable $\noise$. In various applications, we believe it is natural for the drift $b$ of such process to depend on the distribution and choices of players. This justifies that $b$ may depend on $\nabla_x U$ and be of the form
\[b(\theta,m)=\Tilde{b}(\theta,m,\nabla_x U(t,\cdot,\theta,m))\]
as this noise process is affected by the general direction (or control) of players. Moreover, the difficulty raised by the presence of additive common noise and by the common noise process $\theta$ are orthogonal, which is why we  start by studying the wellposedness of the following master equation 
\begin{equation}
\label{eq: general ME}
\left\{
\begin{array}{c}
\displaystyle -\partial_t U+H(x,\noise,m,\nabla_xU(t,x,\noise,m))+b(\noise,m,\nabla_x U(t,\cdot,\theta,m))\cdot\nabla_\noise U-\sigma_x\Delta_x U-\sigma_\noise\Delta_\noise U\\
\displaystyle -\sigma_x\int \text{div}_{y}[D_mU](t,x,\noise,m,y)m(dy)
\displaystyle+\int D_m U(t,x,\noise,m,y)\cdot D_p H(y,\noise,m,\nabla_x U)m(dy)=0,\\
\text{ for } (t,x,\noise,m) \in (0,T)\times \reels^d \times\reels^n\times  \mathcal{P}(\reels^d),\\
U(T,x,m)=U_0(x,\noise,m) \text{ for } (x,\noise,m) \in \reels^d\times\reels^n \times  \mathcal{P}(\reels^d).
\end{array}
\right.
\end{equation}
and extend our results to the situation $\beta\neq 0$ at the end. Whenever $b$ depends on $\theta$ only, we will see that the general theory of MFG adapts quite well. However, as soon as $b$ depends on the measure argument or on the value function, classical arguments fail and the theory must be adapted. Indeed, without further assumptions on the coefficients singularities may appear in finite time as observed in \cite{noise-add-variable}.

\subsection{Organization of the paper}
In each Section of this article, our analysis relies on the concept of Lipschitz solutions to the master equation \cite{lipschitz-sol} that we recall in Section \ref{section Lipschitz solutions}. In Section \ref{section LL monotone solutions} we tackle the wellposedness of \eqref{eq: general ME} in the Lasry-Lions monotone regime. By adapting arguments from \cite{convergence-problem} we show the existence of a solution for measure belonging to $\mathcal{P}_1(\reels^d)$. In contrast, in Section \ref{section L2 monotone solutions} we consider the existence of solutions in the space $\mathcal{P}_2(\reels^d)$ by expending on Lions's Hilbertian approach \cite{Lions-college}. We highlight the equivalence between this method and displacement monotonicity. Let us also mention that the analysis is carried in a slightly more general setting to account not just for \eqref{eq: general ME} but also for equations stemming from mean field forward backward differential equations (FBSDE) \cite{mean-field-FBSDE} and some extended MFG \cite{extended-mfg}. Finally, in Section \ref{section common noise} we tackle the original problem of \eqref{eq: ME with both common noise}. Our treatment of additive common noise appears quite general and practical to handle the second order term coming from such modelling.

\subsection{Notation}
\begin{enumerate}
    \item[-] Consider $(d,q)\in \mathbb{N}\times\reels$, $d,q\geq 1$, and let $\mathcal{P}(\reels^d)$ be the set of (Borel) probability measures on $\reels^d$, we use the usual notation 
\[\mathcal{P}_q(\reels^d)=\left\{\mu\in \mathcal{P}(\reels^d), \quad \int_{\reels^d} |x|^q \mu(dx)<+\infty\right\},\]
for the set of all probability measures with a finite $q-$th moment.
\item[-] For two measures $\mu,\nu\in\mathcal{P}(\reels^d)$ we define $\Gamma(\mu,\nu)$ to be the set of all probability measures $\gamma\in\mathcal{
P}(\reels^{2d})$ satisfying 
\[\gamma(A\times \reels^d)=\mu(A) \quad \gamma(\reels^d\times A)=\nu(A),\]
for all Borel set $A$ on $\reels^d$. 
\item[-]The Wasserstein $q-$distance between two measures belonging to $\mathcal{P}_q(\reels^d)$ is defined as
\[\mathcal{W}_q(\mu,\nu)=\left(\underset{\gamma\in \Gamma(\mu,\nu)}{\inf}\int_{\mathbb{\reels}^{2d}} |x-y|^q\gamma(dx,dy)\right)^{\frac{1}{q}}.\]
In what follows $\mathcal{P}_q(\reels^d)$ is always endowed with the associated Wasserstein distance, $(\mathcal{P}_q,\mathcal{W}_q)$ being a complete metric space. Let us also remind that for $\mathcal{W}_1$ there is the following dual representation 
\[\mathcal{W}_1(\mu,\nu)=\underset{\varphi, \|\varphi\|_{Lip}\leq 1}{\sup}\left(\int_{\reels^d}\varphi(x)\mu(dx)-\int_{\reels^d}\varphi(x)\nu(dx)\right),\]
for $\|\varphi\|_{Lip}$ the usual Lipschitz semi-norm
\[\|\varphi\|_{Lip}=\underset{\underset{x\neq y}{(x,y)}}{\sup}\frac{|\varphi(x)-\varphi(y)|}{|x-y|}.\]

\item[-] We say that a function $U:\mathcal{P}_q(\reels^d)\to \reels^d$ is Lipschitz if
\[\exists C\geq 0, \quad \forall(\mu,\nu)\in\left(\mathcal{P}_q(\reels^d)\right)^2, \quad |U(\mu)-U(\nu)|\leq C\mathcal{W}_q(\mu,\nu).\]
In which case we use the notation $\|U\|_{Lip(\mathcal{W}_q)}$ for its Lipschitz semi-norm:
\[\|U\|_{Lip(\mathcal{W}_q)}= \sup_{\substack{\mu,\nu\in\mathcal{P}_q(\mathbb{T}^d)\\ \mu\neq \nu}}\frac{|U(\mu)-U(\nu)|}{\mathcal{W}_q(\mu,\nu)}.\]

\item[-] For a measure $m\in\mathcal{P}(\reels^{d})$, and $1\leq i\leq d$, we use the notation $\pi_i m$ for the marginal of $m$ over its first $i$ variables and $\pi_{-i}$ for its marginal over the last $i$ variables. Which is to say that for any borel subset $A$ of $\reels^i$
\[\pi_i m(A)=m(A\times \reels^{d-i}) \quad \pi_{-i} m(A)=m(\reels^{d-i}\times A).\]
\item[-] Consider $(\Omega, \mathcal{F},\mathbb{P})$ a probability space,
\begin{itemize}
    \item[-] for $q\geq 0$ we define 
\[L^q(\Omega, \reels^d)=\left\{ X: \Omega\to \reels^d, \quad \esp{|X|^q}<+\infty\right\}.\]
\item[-] Whenever a random variable $X:\Omega\to \reels^d$ is distributed along $\mu\in \mathcal{P}(\reels^d)$ we use equivalently the notations $\mathcal{L}(X)=\mu$ or  $X\sim\mu$. 
\item[-] If another probability measure $\mathbb{Q}$ is defined on $(\Omega,\mathcal{F})$, the expectation under $\mathbb{Q}$ of a random variable $X:\Omega\to \reels^d$ is noted $\mathbb{E}^\mathbb{Q}\left[X\right]$.
\end{itemize}
\item[-] $\mathcal{M}_{d\times n}(\reels)$ is the set of all matrices of size $d\times n$ with reals coefficients, with the notation $\mathcal{M}_n(\reels)\equiv \mathcal{M}_{n\times n}(\reels)$. $\mathcal{S}_n(\reels)$ is the subset of symmetric matrices of size $n$. 
\item[-] $B_d$ refers to the block matrix 
\[B_d=\left(\begin{array}{cc}
     I_d & I_d  \\
     I_d& I_d
\end{array}
\right),\]
with $I_d$ the identity matrix of size $d$. 
\item[-] $C_b(\reels^d,\reels^k)$ is the set of all continuous bounded functions from $\reels^d$ to $\reels^k$.
\item[-] For $f:\reels^+\times\reels^d\times \reels^n\times \mathcal{P}(\reels^d)\to \reels^d$ and a function $b:\reels^n\times \reels^d\times C(\reels^d,\reels^d)\to \reels^d$ we use either the notations 
$(t,\theta,m)\mapsto b(\theta,m,f(t,\cdot,\theta,m))$ or $(t,\theta,m)\mapsto b[f](t,\theta,m)$ to say $b$ depends functionally on $y\mapsto f(t,y,\theta,m)$ for a given $(t,\theta,m)\in \reels^+\times\reels^n\times \mathcal{P}(\reels^d)$.
\item[-] We say that $U:\mathcal{P}(\reels^d)\to \reels$ is derivable at $m$ if there exists a continuous map $\psi:\reels^d\to \reels$ such that
\[\forall \nu\in\mathcal{P}(\reels^d) \quad \underset{\varepsilon\to 0}{\lim} \frac{U(m+\varepsilon(\nu-m))-U(m)}{\varepsilon}=\int_{\reels^d}\psi(x)(\nu-m)(dx).\]
In this case we define the derivative of $U$ at $m$, $\nabla_mU(m)$ to be one such $\psi$ such that 
\[\int_{\reels^d} \nabla_mU(m)(y)m(dy)=0.\]
Whenever $\nabla_mU(m)$ is differentiable, we define the Wasserstein derivative
\[D_m U(m,y)=\nabla_y \nabla_mU(m)(y).\]
Derivatives of higher order on the space of measures can be defined by induction.
\item[-]  U is said to belong to $C^{1,k}\left(\mathcal{P}(\reels^d)\right)$ whenever for all $m\in\mathcal{P}(\reels^d)$, $\nabla_mU(m):\reels^d\to \reels$ is $C^k$ and the mapping
\[
\left\{
\begin{array}{c}
     \mathcal{P}(\reels^d)\to C^k(\reels^d,\reels) \\
     m\mapsto \left(y\mapsto\nabla_mU(m)(y)\right)
\end{array}
\right.
\]
is continuous for the topology of narrow convergence.
\item[-] Finally, for a measure $\mu$ and a measurable function $f$, we use the notation $f_\#\mu$ for the pushforward of $\mu$ by $f$. In particular, for $\noise\in \reels^d$ $(id_{\reels^d}+\noise)_\#\mu$ is defined as the pushforward of $\mu$ by the map $x\mapsto x+\noise$.
\end{enumerate}
\subsection{Preliminaries}
\label{prelim}
In this article we work on a standard probability space $(\Omega,\mathcal{A},\mathbb{P})$ featuring a collection of Brownian motions. We start with a reminder on the behaviour of functions of measure at points of minimum
\begin{prop}
\label{prop: comparison minimum measure smooth}
    Consider $U\in C^{1,1}\left(\mathcal{P}(\reels^d)\right)$ and suppose that there exists $m_0\in\mathcal{P}(\reels^d)$ such that 
    \[U(m_0)=\underset{\mu\in\mathcal{P}(\reels^d)}{\inf}U(\mu).\]
    Then 
    \begin{gather*}
       \forall \phi\in C_b(\reels^d,\reels^d) \quad  \int_{\reels^d} D_m U(m_0,y)\cdot \phi(y)m_0(dy)=0.
    \end{gather*}
    If furthermore $U\in C^{1,2}\left(\mathcal{P}(\reels^d)\right)$ then
    \[ \forall \phi\in C_b(\reels^d,\reels^d) \quad 
 \int_{\reels^d} \phi(y)\cdot D_y D_mU(m_0,y)\cdot \phi(y) m_0(dy)\geq 0.\]

\end{prop}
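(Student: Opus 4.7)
The plan is to convert the optimality of $m_0$ for $U$ on the convex set $\mathcal{P}(\reels^d)$ into pointwise information on the scalar function $y\mapsto \nabla_m U(m_0)(y)$, and then to read off both conclusions from the classical first- and second-order optimality conditions for that auxiliary function on $\reels^d$. No sophisticated machinery from optimal transport is needed: the key observation is that the first-order criterion on the infinite-dimensional simplex $\mathcal{P}(\reels^d)$ already forces $\nabla_m U(m_0)$ to attain its global minimum at every point of $\mathrm{supp}(m_0)$.

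The central step is therefore to show that $\nabla_m U(m_0)$ vanishes identically on $\mathrm{supp}(m_0)$. For any $\nu\in\mathcal{P}(\reels^d)$, the curve $\varepsilon\mapsto m_0+\varepsilon(\nu-m_0)$ remains in $\mathcal{P}(\reels^d)$ for $\varepsilon\in[0,1]$, so the minimality of $m_0$ combined with the definition of $\nabla_m U$ recalled above yields
\[
\int_{\reels^d}\nabla_m U(m_0)(y)(\nu-m_0)(dy)\;\geq\;0
\]
for every such $\nu$. Testing with $\nu=\delta_y$ and invoking the normalization $\int_{\reels^d}\nabla_m U(m_0)\,dm_0=0$ gives $\nabla_m U(m_0)(y)\geq 0$ for every $y\in\reels^d$; that same normalization together with the continuity of $\nabla_m U(m_0)$ then forces $\nabla_m U(m_0)\equiv 0$ on $\mathrm{supp}(m_0)$.

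Consequently, every $y_0\in\mathrm{supp}(m_0)$ is a global minimum on $\reels^d$ of a $C^k$ scalar function, with $k=1$ when $U\in C^{1,1}$ and $k=2$ when $U\in C^{1,2}$. As $\reels^d$ has no boundary, classical calculus yields $\nabla_y\nabla_m U(m_0)(y_0)=D_m U(m_0,y_0)=0$ in the first case and, in the second case, the symmetric Hessian $\nabla^2_y\nabla_m U(m_0)(y_0)=D_y D_m U(m_0,y_0)$ is positive semidefinite. Integrating against $m_0$, which charges only $\mathrm{supp}(m_0)$, immediately delivers both conclusions: $D_m U(m_0,y)\cdot\phi(y)$ vanishes $m_0$-a.e., while $\phi(y)\cdot D_y D_m U(m_0,y)\cdot\phi(y)$ is pointwise nonnegative on $\mathrm{supp}(m_0)$. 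I do not expect any serious obstacle; the only delicate point is the passage from the functional first-order condition on the convex set $\mathcal{P}(\reels^d)$ to the pointwise criteria for the auxiliary scalar function $\nabla_m U(m_0)$, which is exactly the content of the first paragraph above.
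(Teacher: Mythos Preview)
The paper does not supply its own proof of this proposition; it simply defers to \cite{bertucci-monotone}, Proposition~1.1, for a more general statement. Your argument is correct and self-contained, and it is essentially the standard derivation one finds in the literature: perturb $m_0$ along convex directions $m_0+\varepsilon(\nu-m_0)$, specialize to $\nu=\delta_y$, and use the normalization $\int\nabla_m U(m_0)\,dm_0=0$ to conclude that $\nabla_m U(m_0)$ is a nonnegative continuous function vanishing on $\mathrm{supp}(m_0)$. The passage to the two displayed conclusions is then exactly as you describe, with no hidden subtlety: once every point of $\mathrm{supp}(m_0)$ is a global minimizer on $\reels^d$ of the scalar $C^k$ function $y\mapsto\nabla_m U(m_0)(y)$, the classical first- and second-order conditions hold pointwise there, and integrating against $m_0$ gives the result.
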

A more general result is proved in \cite{bertucci-monotone} Proposition 1.1. This proposition justifies a definition of viscosity solution we will introduce later for a particular equation. Throughout this article we also use a simple adaptation of Lemma 2.3 of \cite{disp-monotone-1}
\begin{prop}
\label{prop: expectation to pointwise}
    Let $f:\FuncDef{\mathcal{P}_2(\reels^d)\times \reels^{2d}}{\reels}{(\mu,x,y)}{f(\mu,x,y)}$ be a continuous function, symmetric in $(x,y)$, such that for all $\mu\in\mathcal{P}_2(\reels^d)$ $x\in\reels^d$ and $X,Y\in L^2(\Omega,\reels^d)$ both with law $\mu$:
    \[\esp{f(\mu,X,Y)}\geq 0 \text{ and } f(\mu,x,x)=0.\]
Then the inequality holds pointwise:
\[\forall \mu\in\mathcal{P}_2(\reels^d)\quad (x,y)\in\reels^{2d} \quad f(\mu,x,y)\geq 0.\]
\end{prop}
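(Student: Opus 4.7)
The plan is to test the expectation hypothesis against a carefully engineered coupling of a perturbed measure that puts extra mass at the target points $x$ and $y$, and then to exploit the fact that the diagonal vanishing $f(\mu,z,z)=0$ holds for \emph{every} measure argument. Fix $(\mu,x,y)\in\mathcal{P}_2(\reels^d)\times\reels^{2d}$ and, for $\varepsilon\in(0,1)$, set
\[
\mu_\varepsilon:=(1-\varepsilon)\mu+\tfrac{\varepsilon}{2}\delta_x+\tfrac{\varepsilon}{2}\delta_y\in\mathcal{P}_2(\reels^d).
\]
I would then introduce on $\reels^{2d}$ the coupling
\[
\pi_\varepsilon:=(1-\varepsilon)(\mathrm{id}\times\mathrm{id})_{\#}\mu+\tfrac{\varepsilon}{2}\delta_{(x,y)}+\tfrac{\varepsilon}{2}\delta_{(y,x)},
\]
and verify by inspection that both of its marginals equal $\mu_\varepsilon$, so that any $(X_\varepsilon,Y_\varepsilon)\in L^2(\Omega,\reels^{2d})$ with law $\pi_\varepsilon$ (which exists since $\mu_\varepsilon\in\mathcal{P}_2$) meets the requirements of the hypothesis.

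Applying the assumption $\esp{f(\mu_\varepsilon,X_\varepsilon,Y_\varepsilon)}\geq 0$ and integrating against $\pi_\varepsilon$ gives
\[
(1-\varepsilon)\int_{\reels^d}f(\mu_\varepsilon,z,z)\,\mu(dz)+\tfrac{\varepsilon}{2}f(\mu_\varepsilon,x,y)+\tfrac{\varepsilon}{2}f(\mu_\varepsilon,y,x)\geq 0.
\]
The decisive step, which is really the whole point of the construction, is to invoke the diagonal hypothesis \emph{at the perturbed measure} $\mu_\varepsilon$: since $f(\mu_\varepsilon,z,z)=0$ for every $z$, the entire integral term vanishes identically. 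Combined with the symmetry $f(\mu_\varepsilon,x,y)=f(\mu_\varepsilon,y,x)$, the inequality collapses to $\varepsilon f(\mu_\varepsilon,x,y)\geq 0$, so that $f(\mu_\varepsilon,x,y)\geq 0$ for every $\varepsilon\in(0,1)$.

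To conclude, I would let $\varepsilon\to 0$. Since the perturbation measure $\tfrac{1}{2}(\delta_x+\delta_y)$ has finite second moment, one has $\mathcal{W}_2(\mu_\varepsilon,\mu)\to 0$ (both narrow convergence and convergence of second moments are immediate), and the continuity of $f$ on $\mathcal{P}_2(\reels^d)\times\reels^{2d}$ yields $f(\mu,x,y)\geq 0$. The only conceptual obstacle is recognizing that $f(\mu,z,z)=0$ is stated uniformly in $\mu$, so that the leading order contribution of the diagonal coupling drops out exactly and leaves only the $O(\varepsilon)$ term one can normalize by; once that is seen, the proof is just bookkeeping.
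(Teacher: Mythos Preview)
Your proof is correct and is precisely the standard argument the paper alludes to when it says the proof of Lemma~2.3 in \cite{disp-monotone-1} ``can be carried out under slightly more general assumptions'': perturb $\mu$ by atoms at $x$ and $y$, couple via the diagonal plus the swap $\tfrac{\varepsilon}{2}(\delta_{(x,y)}+\delta_{(y,x)})$, use the diagonal vanishing $f(\mu_\varepsilon,z,z)=0$ to kill the $O(1)$ term, and pass to the limit via $\mathcal{W}_2$-continuity. The paper does not spell this out, so your write-up is in fact more detailed than what appears there.
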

It suffices to notice that the proof they gave can be carried out under slightly more general assumptions.

Finally, we remind this singular Grönwall's Lemma of which a proof can be found in \cite{singular-gronwall}. 
\begin{restatable}{lemma}{backwardgronwall}
\label{Lemma: backward gronwall}
    Let $(u_s)_{s\in[0,t]}$ be a positive bounded function such that for some $p\in[0,1)$
    \[\forall l\leq t \quad u_l\leq C+\int_0^l \frac{1}{(l-s)^p}u_s ds.\]

    Then there exist a constant $C_t$ depending on $C$ and $t$ only such that
    \[\forall s\leq t \quad u_s\leq C_te^{C_t s}.\]
\end{restatable}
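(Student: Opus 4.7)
I would avoid iterating the kernel and instead work with an exponentially weighted sup-norm, which turns the singular integral operator into a contraction after one step. Define $M_\lambda = \sup_{s \in [0,t]} u_s e^{-\lambda s}$ for a parameter $\lambda > 0$ to be chosen; this quantity is finite because $u$ is assumed bounded. Multiplying the hypothesis through by $e^{-\lambda l}$ and using $u_s \leq M_\lambda e^{\lambda s}$ inside the integral gives, for every $l \in [0,t]$,
\[u_l e^{-\lambda l} \leq C e^{-\lambda l} + M_\lambda \int_0^l (l-s)^{-p} e^{-\lambda(l-s)}\, ds \leq C + M_\lambda \int_0^\infty r^{-p} e^{-\lambda r}\, dr,\]
after the substitution $r = l-s$ and extension to $+\infty$. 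The remaining integral evaluates to $\lambda^{p-1} \Gamma(1-p)$ via $\tau = \lambda r$, and this is finite since $p \in [0,1)$.

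Taking the supremum over $l \in [0,t]$ produces the self-bound $M_\lambda \leq C + M_\lambda \lambda^{p-1} \Gamma(1-p)$. Because the factor $\lambda^{p-1}$ tends to $0$ as $\lambda \to \infty$, I would then choose $\lambda = \lambda(p)$ large enough that $\lambda^{p-1}\Gamma(1-p) \leq 1/2$. Absorbing the $M_\lambda$ term on the right yields $M_\lambda \leq 2C$, that is $u_s \leq 2C\, e^{\lambda s}$ for $s \in [0,t]$, and setting $C_t = \max(2C, \lambda)$ delivers the claimed estimate $u_s \leq C_t e^{C_t s}$.

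The only delicate point is ensuring the a priori finiteness of $M_\lambda$ before the absorption step, which is precisely what the boundedness hypothesis on $u$ provides; without it one would first have to establish boundedness by a continuity or truncation argument. An alternative route would be to iterate the hypothesis $n = \lceil 1/(1-p) \rceil$ times and use the Beta-function identity $\int_r^l (l-s)^{-p}(s-r)^{-p}\, ds = \mathrm{Beta}(1-p,1-p)\,(l-r)^{1-2p}$ to convolve the singular kernel into a bounded one, reducing the problem to the classical Grönwall lemma. This works but is heavier to write down cleanly.
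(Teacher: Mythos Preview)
Your argument is correct. The paper itself does not prove this lemma; it merely states the result and refers the reader to \cite{singular-gronwall}. Your exponentially-weighted sup-norm approach is clean and self-contained: the key observation that $\int_0^\infty r^{-p}e^{-\lambda r}\,dr = \lambda^{p-1}\Gamma(1-p) \to 0$ as $\lambda\to\infty$ allows the absorption step to go through in one stroke, and the boundedness hypothesis on $u$ is exactly what is needed to make $M_\lambda$ finite a priori. The iteration route you sketch at the end (convolving the kernel with itself via the Beta identity until it becomes bounded) is closer to the classical reference proofs, but your direct method is shorter. One incidental remark: your constant $C_t = \max(2C,\lambda(p))$ actually depends only on $C$ and $p$, not on $t$, so you obtain a slightly sharper conclusion than the stated one.
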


\section{Notion of solution and monotonicity}
\label{section Lipschitz solutions}
In this article, to show the existence of solutions to \eqref{eq: general ME} for arbitrary time horizon $T$, we will work as follows: we first show that there exists a solution until an eventual blow-up time $T_c$, then we show that this solution does not blow-up in finite time, thus implying global existence. Local in time existence is obtained by means of Lipschitz solutions \cite{lipschitz-sol,extensioncauchylipschitztheorem}, which is the notion of solution to the master equation we will be using throughout this article. We remind the key ideas and main properties of such solutions thereafter. To prove that solutions do not blow up later on, we will require some monotonicity assumptions on the data. We will show that wellposedness holds under two different regimes of monotonicit that we remind at the end of this section.
\subsection{Lipschitz solutions}
\subsubsection{A first look}

\quad 

Consider a solution $U$ to the master equation
\begin{equation}
\label{eq: ME Wq}
\left\{
\begin{array}{c}
\displaystyle \partial_t U+H(x,\noise,m,\nabla_xU(t,x,\noise,m))+b[\nabla_x U](t,\noise,m)\cdot\nabla_\noise U-\sigma_x\Delta_x U-\sigma_\noise\Delta_\noise U\\
\displaystyle-\sigma_x\int \text{div}_{y}[D_mU](t,x,\noise,m,y)m(dy)+\int D_m U(t,x,\noise,m,y)\cdot D_p H(y,\noise,m,\nabla_x U)m(dy),\\
\text{ for } (t,x,\noise,m) \in (0,T)\times \reels^d \times\reels^n\times  \mathcal{P}_q(\reels^d),\\
U(0,x,m)=U_0(x,\noise,m) \text{ for } (x,\noise,m) \in \reels^d\times\reels^n \times  \mathcal{P}_q(\reels^d),
\end{array}
\right.
\end{equation}
in which time has been reversed for convenience. Should $\nabla_xU$ be known and Lipschitz, $U$ can, at least formally, be seen as satisfying a linear equation with Lipschitz coefficients. $U$ is then easily defined by integrating through the characteristics. This is the main idea behind Lipschitz solutions: Lipschitz regularity of $\nabla_x U$ is sufficient to be able to define solutions to the master equation. The interest of such solutions is twofold. First, we will see that uniqueness always hold in this class of solutions. Second, since we do not ask for \eqref{eq: ME Wq} to hold pointwise but define instead solutions by integrating through non-linear characteristics,  for solutions to be well-defined locally in time we do not need as much regularity on the data. In particular, no differentiability with respect to the measure argument is required. We refer the interested reader to \cite{lipschitz-sol} as we present the main theory concisely thereafter.
\subsubsection{Definition and basic properties}
Since we are interested in the regularity of $\nabla_x U$, a key idea in Lipschitz solutions is to work directly on $W=\nabla_x U$ by deriving \eqref{eq: ME Wq} with respect to $x$. We expect $W$ to be a solution of 
\begin{equation}
\label{eq: lip sol Pq}
\left\{
\begin{array}{c}
\displaystyle\partial_t W+D_p H(x,\noise,m,W)\cdot \nabla_x W+b[W](t,\noise,m)\cdot\nabla_\noise W-\sigma_x \Delta_x W-\sigma_\noise \Delta_\noise W\\
\displaystyle +\int_{\reels^d} D_pH (y,\!\noise,\!m,\!W)\cdot D_m W( t,\!x,\!\noise,\!m)(y)m(dy)-\sigma_x\int_{\reels^d} \text{div}_y (D_mW(t,\!x,\!\noise,\!m)(y))m(dy)\\
\displaystyle =-D_x H(x,\noise,m,W) \text{ in } (0,T)\times\reels^d\times\reels^n\times\mathcal{P}_q(\reels^d),\\
\displaystyle W(0,x,\noise,m)=\nabla_x U_0(x,\noise,m) \text{ for } (x,\noise,m)\in\reels^d\times\reels^n\times\mathcal{P}_q(\reels^d).
 \end{array}
 \right.
\end{equation}
The proper definition of a solution $W$ to this equation, with Lipschitz regularity only, is then done through a fixed point. Consider a linear version of \eqref{eq: lip sol Pq}
\begin{equation}
\label{eq: linear lip sol}
\left\{
\begin{array}{c}
\displaystyle\partial_t V+A(t,x,\noise,m)\cdot \nabla_x V+B(t,\noise,m)\cdot\nabla_\noise V-\sigma_x \Delta_x V-\sigma_\noise \Delta_\noise V\\
\displaystyle +\int_{\reels^d} A(t,y,\noise,m)\cdot D_m V(t,x,\noise,m)(y)m(dy)-\sigma_x \int_{\reels^d} \text{div}_y (D_mV(t,x,\noise,m)(y))m(dy)\\
\displaystyle =E(t,x,\noise,m) \text{ in } (0,T)\times\reels^d\times\reels^n\times\mathcal{P}_q(\reels^d),\\
\displaystyle V(0,x,\noise,m)=V_0(x,\noise,m) \text{ for } (x,\noise,m)\in\reels^d\times\reels^n\times\mathcal{P}_q(\reels^d),
 \end{array}
 \right.
\end{equation}
for 4 vector fields $(A,B,E,V_0):(0,T)\times\reels^d\times\reels^n\times\mathcal{P}_q(\reels^d)\longrightarrow (\reels^d)^3\times\reels^n$. 
A solution of this linear system is given by integrating along the characteristics for all $t<T$
\begin{equation}
\label{eq: Feynman-kac measure}
\begin{array}{c}
\displaystyle V(t,x,\noise,\mu)=\espcond{V_0(X_t,\noise_t,m_t)+\int_0^t E(t-s,X_s,\noise_s,m_s)ds}{X_0=x,\noise_0=\noise,m_0=\mu},\\
\displaystyle dX_s=-A(t-s,X_s,\noise_s,m_s)ds+\sqrt{2\sigma_x}dB_s,\\
\displaystyle d\noise_s=-B(t-s,\noise_s,m_s)ds+\sqrt{2\sigma_\noise}dB^\noise_s,\\
dm_s=\left(-\text{div}\left(A(t-s,x,\noise_s,m_s)m_s\right)+\sigma_x \Delta_x m_s\right)ds,
\end{array}
\end{equation}
for $(B_s,B^\noise_s)_{s\geq 0}$ a $d+n$ dimensional Brownian motion. As soon as $(x,\noise,\mu)\mapsto (A,B)(s,x,\noise,\mu)$ is Lipschitz (with respect to the $\mathcal{W}_q$ distance for the measure argument) uniformly in $s\in (0,T)$, this system of a coupled SDE and SPDE $(X_s,\noise_s,m_s)_{s\in[0,t]}$ is well-defined (see \cite{probabilistic-mfg}). If $(V_0,E)$ is also Lipschitz, then it is easy to see that the function $V$ given by this formula is in fact Lipschitz itself. Consider now the functional $\psi$ that to a Lipschitz $(A,B,E,V_0)$ associate this function V, i.e.
\[\psi(T,A,B,E,V_0)=\FuncDef{(0,T)\times\reels^d\times \reels^n\times\mathcal{P}_q(\reels^d)}{\reels^d,}{(t,x,\noise,\mu)}{V(t,x,\noise,\mu),}\]
where $V$ is given by \eqref{eq: Feynman-kac measure}. The definition of a Lipschitz solution to \eqref{eq: lip sol Pq} is given by a fixed point of this operator. 
\begin{definition}
\label{def: lipschitz sol W2}
Let $T>0$, $W:[0,T)\times \reels^d\times\reels^n\times\mathcal{P}_q(\reels^d)\to \reels^d$ is said to be a Lipschitz solution of \eqref{eq: lip sol Pq} if :
\begin{itemize}
    \item[-] $W$ is Lipschitz in $(x,\noise,\mu)$ uniformly in $t\in[0,\alpha]$ for all $\alpha$ in $[0,T)$.
    \item[-] for all $t<T$: \[W=\psi(t,D_p H(\cdot,W),b[W],-D_x H(\cdot,W),\nabla_x U_0).\]
\end{itemize}
\end{definition}
Following what we just saw on \eqref{eq: linear lip sol}, such a definition makes sense provided $(D_pH,D_xH,\nabla_x U_0,b)$ are at least Lipschitz in some sense. We shall work under the following assumptions

\begin{hyp}
\label{hyp: Lipschitz Wq}
$\exists C>0 \quad \forall (x,y)\in(\reels^d)^2, (\noise,\noise')\in(\reels^n)^2, (\mu,\nu)\in(\mathcal{P}_q(\reels^d))^2, (p,q)\in(\reels^d)^2,$
    \begin{enumerate}
    \item[-] $|\nabla_x U_0(x,\noise,\mu)-\nabla_x U_0(y,\noise',\nu)|\leq C\left(|x-y|+|\noise-\noise'|+\mathcal{W}_q(\mu,\nu)\right).$
        \item[-] $|D_p H(x,\noise,\mu,p)-D_p H(y,\noise',\nu,q)|\leq C\left(|x-y|+|\noise-\noise'|+|p-q|+\mathcal{W}_q(\mu,\nu)\right).$
        \item[-]$|D_x H(x,\noise,\mu,p)-D_xH(y,\noise',\nu,q)|\leq C\left(|x-y|+|\noise-\noise'|+|p-q|+\mathcal{W}_q(\mu,\nu)\right).$
        \item[-]For any two continuous Lipschitz bounded $f,g: \reels^d\to \reels^d$
        \[|b(\noise,\mu,f(\cdot))-b(\noise',\nu,g(\cdot))|\leq C\left(|\noise-\noise'|+(1+\|f\|_{Lip}+\|g\|_{Lip} )\mathcal{W}_q(\mu,\nu)+\|f-g\|_\infty\right).\]
    \end{enumerate}
\end{hyp}
It is necessary to assume the functional dependency of $b$ is Lipschitz in the $\|\cdot\|_\infty$ norm, the norm for which the fixed point definition of Lipschitz solutions is carried out. Letting the Lipschitz constant of $b$ in $\mathcal{W}_q$ depend on the Lipschitz constant of its functionnal argument allows to consider functions of the form 
\[b[f](\mu)=\int f(x)\mu(dx)\]
which are natural in practice. 
\begin{thm}
\label{lip sol b(p)}
       Under Hypothesis \ref{hyp: Lipschitz Wq} the following hold:
    \begin{enumerate}
        \item[-] There is always an existence time  $T>0$ such that there is a unique solution of \eqref{eq: lip sol Pq} in the sense of Definition \ref{def: lipschitz sol W2} on $[0,T)$.
        \item[-] There exist $T_c>0$ and a maximal solution $W$ defined on $[0,T_c[$ such that for all Lipschitz solutions $V$ defined on $[0,T)$: $T\leq T_c$ and $W|_{[0,T)}\equiv V$.
        \item[-] If $T_c<\infty$ then $\underset{t\to T_c}{\lim} \|W(t,\cdot)\|_{Lip}=+\infty$.
    \end{enumerate}
\end{thm}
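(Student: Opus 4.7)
The plan is to obtain the theorem through a contraction argument on the fixed-point operator implicit in Definition \ref{def: lipschitz sol W2}, following the strategy of \cite{lipschitz-sol}. The starting point is a quantitative version of the statement that $\psi$ maps Lipschitz inputs to Lipschitz outputs.

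First I would establish stability of the linear problem: for Lipschitz inputs $(A, B, E, V_0)$ with Lipschitz constants bounded uniformly in time by some $K$, the function $V = \psi(T, A, B, E, V_0)$ given by \eqref{eq: Feynman-kac measure} is itself Lipschitz in $(x, \noise, \mu)$. The coupled SDE-SPDE system for $(X_s, \noise_s, m_s)$ is well-posed classically (the Fokker-Planck equation on $m_s$ is in fact deterministic, so this reduces to a McKean-Vlasov type SDE). The Lipschitz estimate on $V$ is obtained by coupling two copies of the characteristics starting from different initial data $(x, \noise, \mu)$ and $(x', \noise', \mu')$, using the same Brownian motion and an optimal coupling of the initial measures, and then closing a Gronwall inequality on $\esp{|X_s - X'_s|^q + |\noise_s - \noise'_s|^q} + \mathcal{W}_q(m_s, m'_s)^q$. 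This yields a bound of the form $\|V(t, \cdot)\|_{Lip} \leq e^{C K t}\bigl(\|V_0\|_{Lip} + t\sup_{s \leq t}\|E(s, \cdot)\|_{Lip}\bigr)$.

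Given this, I would define the nonlinear operator $\Phi(W) = \psi(T, D_p H(\cdot, W), b[W], -D_x H(\cdot, W), \nabla_x U_0)$ on the ball $\mathcal{X}_{T,M} = \{W : \sup_{t \in [0,T]} \|W(t, \cdot)\|_{Lip} \leq M\}$ with, for instance, $M = 2\|\nabla_x U_0\|_{Lip}$. Under Hypothesis \ref{hyp: Lipschitz Wq}, when $W \in \mathcal{X}_{T,M}$ the inputs $D_pH(\cdot, W)$, $D_xH(\cdot, W)$, $b[W]$ are Lipschitz with constants bounded by $C(1+M)$. For $T$ small enough (depending only on $M$ and the constants of Hypothesis \ref{hyp: Lipschitz Wq}), the stability estimate from the first step gives $\Phi(W) \in \mathcal{X}_{T,M}$; an analogous coupling, now comparing two input fields $W, W'$, shows $\Phi$ is a contraction in the uniform norm $\|\cdot\|_\infty$ on $\mathcal{X}_{T,M}$. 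Since $\mathcal{X}_{T,M}$ is closed in this norm, Banach's fixed-point theorem yields the first point. The second and third points follow from a standard gluing argument: the local existence time produced above depends monotonically only on the Lipschitz constant of the initial datum, so if $\limsup_{t \to T_c^-}\|W(t,\cdot)\|_{Lip}$ were finite, choosing $t_0$ sufficiently close to $T_c$ and restarting from $W(t_0, \cdot)$ would extend the solution beyond $T_c$, contradicting maximality.

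The main obstacle is the asymmetry built into Hypothesis \ref{hyp: Lipschitz Wq}: $b$ is Lipschitz in the measure argument only with a constant proportional to the Lipschitz constant of its functional argument, whereas the fixed point itself must be run in $\|\cdot\|_\infty$. Matching the regularity propagated by the characteristic flow with the regularity required by the fixed-point space is the delicate technical point, but it is precisely what the definition of Lipschitz solutions from \cite{lipschitz-sol} is engineered to accommodate; once the two steps are correctly aligned the argument is essentially a Cauchy-Lipschitz theorem on the space of Lipschitz maps valued in $\mathcal{P}_q(\reels^d)$.
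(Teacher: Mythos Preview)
Your proposal is correct and follows essentially the same approach as the paper, which does not give a self-contained proof but refers to \cite{lipschitz-sol} and notes that the only new ingredient for the $\mathcal{W}_q$ setting is the stability estimate of Lemma~\ref{Appendix lemma: W2 distance estimate}; your Gr\"onwall step on $\mathcal{W}_q(m_s,m'_s)^q$ is precisely that lemma. One minor inaccuracy: the Fokker--Planck equation for $(m_s)$ is not deterministic, since its drift depends on the random process $(\noise_s)$; it is an SPDE with random coefficients (though with no stochastic integral term), but this does not affect your coupling argument.
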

This is both a uniqueness and existence result. Uniqueness is a consequence of the Lipschitz regularity of solutions, formally there can be at most one sufficiently smooth solution. This theorem is slightly different from predating existence results on Lipschitz solution. The function $W$ is here Lipschitz for the $\mathcal{W}_q$ distance, while in \cite{lipschitz-sol}, the authors considered Lipschitz solutions with respect to $\mathcal{W}_1$. The extension is very straightforward, the only real difference being that we now need the following estimate for the distance between measures in $\mathcal{W}_q$. 
\begin{restatable}{lemma}{wassersteindistanceestimate}
\label{Appendix lemma: W2 distance estimate}
    let $q\geq 1$ and consider $(\mu^i_t)_{t\geq 0}$ weak solution to
    \[\partial_t \mu^i_t=-\text{div}\left(c_i(t,x)\mu^i_t\right)+\sigma_x \Delta \mu^i_t\text{ in }(0,T)\times\reels^d, \quad \mu^i_0=\mu_0\in \mathcal{P}_q(\reels^d),\]
    for $i\in\{1,2\}$ and two drifts $c_1,c_2$:$[0,T)\times\times\reels^d\to \reels^d$ Lipschitz in $x$ uniformly in $t\in[0,T)$.
    Then there exist C depending only on $\|c_1\|_{Lip}\wedge \|c_2\|_{Lip}$ and T such that
    \[\forall t<T \quad \mathcal{W}_q(\mu^1_t,\mu^2_t)\leq C\left(\int_0^t \|c_1(s,\cdot)-c_2(s,\cdot)\|^q_\infty ds\right)^{\frac{1}{q}}.\]
\end{restatable}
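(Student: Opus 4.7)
The natural approach is probabilistic coupling. The measures $\mu^i_t$ are the laws of the SDEs
\[dX^i_s=-c_i(s,X^i_s)\,ds+\sqrt{2\sigma_x}\,dB_s,\qquad X^i_0=X_0\sim\mu_0,\]
with $X_0$ and $(B_s)_{s\geq 0}$ shared between $i=1,2$. This provides an admissible coupling, hence
\[\mathcal{W}_q^q(\mu^1_t,\mu^2_t)\leq \mathbb{E}\left[|X^1_t-X^2_t|^q\right],\]
so it suffices to estimate the right-hand side. Since the noise terms cancel in the difference, we have the pathwise identity
\[X^1_t-X^2_t=-\int_0^t\bigl(c_1(s,X^1_s)-c_2(s,X^2_s)\bigr)\,ds.\]

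The key step is to add and subtract $c_1(s,X^2_s)$ (using the Lipschitz constant of $c_1$, or symmetrically $c_2$ — we pick whichever has the smaller Lipschitz seminorm, justifying the $\wedge$ in the statement) to obtain
\[|X^1_t-X^2_t|\leq \|c_1\|_{\mathrm{Lip}}\int_0^t|X^1_s-X^2_s|\,ds+\int_0^t\|c_1(s,\cdot)-c_2(s,\cdot)\|_\infty\,ds.\]
Then raise to the $q$-th power, apply the elementary inequality $(a+b)^q\leq 2^{q-1}(a^q+b^q)$, and bound each time integral by Jensen's inequality against the normalized Lebesgue measure on $[0,t]$ to pull the $q$-th power inside. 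After taking expectation this yields, for some constant $C$ depending only on $\|c_1\|_{\mathrm{Lip}}\wedge\|c_2\|_{\mathrm{Lip}}$ and $T$,
\[\mathbb{E}\bigl[|X^1_t-X^2_t|^q\bigr]\leq C\int_0^t \mathbb{E}\bigl[|X^1_s-X^2_s|^q\bigr]\,ds+C\int_0^t\|c_1(s,\cdot)-c_2(s,\cdot)\|_\infty^q\,ds.\]

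Finally a standard (non-singular) Grönwall argument gives
\[\mathbb{E}\bigl[|X^1_t-X^2_t|^q\bigr]\leq C e^{Ct}\int_0^t\|c_1(s,\cdot)-c_2(s,\cdot)\|_\infty^q\,ds,\]
and taking the $q$-th root produces the announced inequality. There is no real obstacle here: the Lipschitz regularity of at least one drift in $x$ makes the SDEs strongly well-posed so the coupling is legitimate, and the whole estimate is $\sigma_x$-free because the Brownian contributions exactly cancel. The only mild subtlety is to keep track that one may choose either Lipschitz constant to close the Grönwall step, which is what the $\wedge$ in the statement records.
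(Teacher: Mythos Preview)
Your proof is correct and follows exactly the route the paper takes: couple the two Fokker--Planck equations through SDEs driven by the same Brownian motion and the same initial datum, then apply Gr\"onwall's lemma to $t\mapsto\esp{|X^1_t-X^2_t|^q}$. The paper's proof is a two-line sketch of precisely this argument; you have simply filled in the details (the add-and-subtract step and the use of Jensen to pull the $q$-th power inside the time integral).
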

\begin{proof}
    This well-known result can be obtained by taking $(X^i_t)_{t\geq 0}$ solution to 
    \[dX^i_t=c^i(t,X^i_t)dt+\sqrt{2\sigma_x}dW_t \quad X^i_0=X_0\in L^q(\Omega,\reels^d),\]
    for $(W_t)_{t\geq 0}$ a Brownian motion and $\mathcal{L}(X_0)=\mu_0$ and applying Grönwall's Lemma to the quantity
    \[t\mapsto \esp{|X^1_t-X^2_t|^q}.\]
\end{proof}
Before we explain how $W$ relates to the solution of the master equation, let us present the following property that will be used extensively throughout this paper
\begin{lemma}
\label{prop: representation of lipschitz solutions}
    Let $W$ be a Lipschitz solution to equation \eqref{eq: lip sol Pq} on $[0,T)$ for some $T>0$. Then, for all $s<t<T$, $(x,\noise,\mu)\in\reels^d\times\reels^n\times\mathcal{P}_q(\reels^d)$, $W$ satisfy the following dynamic programming principle 
        \begin{equation}
        \label{Def: Lipschitz solution b(p)}
\begin{array}{c}
\displaystyle W(t,x,\noise,\mu)=\esp{W(s,X_{t-s},\noise_{t-s},m_{t-s})-\int_0^{t-s}D_xH(X_u,\noise_u,m_u,W(t-u,X_u,\noise_u,m_u))du},\\
\displaystyle dX_s=-D_p H(X_s,\noise_s,m_s,W(t-s,X_s,\noise_s,m_s))ds+\sqrt{2\sigma_x}dB_s \quad X_0=x,\\
\displaystyle d\noise_s=-b[W](t-s,\noise_s,m_s)ds+\sqrt{2\sigma_\noise}dB^\noise_s \quad \noise_0=\noise,\\
dm_s=\left(-\text{div}\left(D_pH(x,\noise_s,m_s,W(t-s,x,\noise_s,m_s))m_s\right)+\sigma_x \Delta_x m_s\right)ds \quad m_0=\mu,
\end{array}
\end{equation}
\end{lemma}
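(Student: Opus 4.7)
The plan is to exploit the fixed-point characterization of $W$ together with a flow (Markov) property of the underlying coupled SDE--SPDE system, applied at the intermediate time $t-s$. By Definition \ref{def: lipschitz sol W2}, at every time $t<T$ one has $W(t,\cdot)=\psi\bigl(t,D_pH(\cdot,W),b[W],-D_xH(\cdot,W),\nabla_xU_0\bigr)$, i.e.\ $W(t,x,\noise,\mu)$ equals the Feynman--Kac representation of \eqref{eq: linear lip sol} with coefficients $A(u,\cdot)=D_pH(\cdot,W(t-u,\cdot))$, $B(u,\cdot)=b[W](t-u,\cdot)$, source $E(u,\cdot)=-D_xH(\cdot,W(t-u,\cdot))$ and initial datum $\nabla_xU_0$, integrated along the characteristics $(X_u,\noise_u,m_u)_{u\in[0,t]}$ starting from $(x,\noise,\mu)$. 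The same characterization applies at time $s<t$.

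First I would fix $s<t<T$ and $(x,\noise,\mu)$, and write down the full representation for $W(t,x,\noise,\mu)$ obtained from \eqref{eq: Feynman-kac measure}, splitting the time interval $[0,t]$ into $[0,t-s]$ and $[t-s,t]$:
\begin{equation*}
\begin{split}
W(t,x,\noise,\mu)=&\ \esp{\nabla_xU_0(X_t,\noise_t,m_t)+\int_{t-s}^{t}\!\!-D_xH\bigl(X_u,\noise_u,m_u,W(t-u,X_u,\noise_u,m_u)\bigr)du}\\
&+\esp{\int_{0}^{t-s}\!\!-D_xH\bigl(X_u,\noise_u,m_u,W(t-u,X_u,\noise_u,m_u)\bigr)du}.
\end{split}
\end{equation*}
The second integral already matches the source term in \eqref{Def: Lipschitz solution b(p)}, so only the first bracket must be identified with $\E[W(s,X_{t-s},\noise_{t-s},m_{t-s})]$.

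Next, I would invoke the tower property with respect to the $\sigma$-algebra generated by $(B_r,B^\noise_r)_{r\le t-s}$ and apply the flow property of the system \eqref{eq: Feynman-kac measure}. Under the time-shift $v=u-(t-s)\in[0,s]$, the characteristics on $[t-s,t]$ started from $(X_{t-s},\noise_{t-s},m_{t-s})$ and driven by the increments of an independent Brownian motion satisfy the SDE--SPDE with drifts $D_pH(\cdot,W(s-v,\cdot))$, $b[W](s-v,\cdot)$ and diffusion $\sqrt{2\sigma_x}$, $\sqrt{2\sigma_\noise}$ — this is exactly the characteristic system used to represent $W(s,\cdot)$. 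Well-posedness of this system under Hypothesis \ref{hyp: Lipschitz Wq} (Lipschitzness of the drifts, via the Lipschitz regularity of $W$ already guaranteed by Definition \ref{def: lipschitz sol W2}) follows from standard arguments recalled in \cite{probabilistic-mfg} and uniqueness in law gives
\[\espcond{\nabla_xU_0(X_t,\noise_t,m_t)+\!\int_{t-s}^{t}\!\!-D_xH(\cdots)du}{(X_{t-s},\noise_{t-s},m_{t-s})}=W(s,X_{t-s},\noise_{t-s},m_{t-s}),\]
where on the right-hand side I used the fixed-point formula for $W(s,\cdot)$ applied at the random initial condition $(X_{t-s},\noise_{t-s},m_{t-s})$. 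Taking expectations yields \eqref{Def: Lipschitz solution b(p)}.

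The main obstacle is the bookkeeping of the time-reversal convention: the running coefficients at time $u$ are $W(t-u,\cdot)$, so one must check that after the time-shift $v=u-(t-s)$ these become $W(s-v,\cdot)$, which is precisely the coefficient used in the fixed-point formula for $W(s,\cdot)$. A minor but related point is that the formula for $W(s,\cdot)$ must be evaluated at the random datum $(X_{t-s},\noise_{t-s},m_{t-s})$; this is legitimate because $W$ is Lipschitz in all its arguments, so the representation is stable under conditioning and the classical flow/Markov property of the coupled SDE--SPDE applies.
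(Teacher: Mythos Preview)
Your proof is correct and follows essentially the same approach as the paper: both start from the fixed-point representation at time $t$, condition on the filtration $\sigma((B_r,B^\noise_r)_{r\le t-s})$, invoke the flow property of the coupled SDE--SPDE system (which rests on pathwise uniqueness), and recognize the resulting conditional expectation as $W(s,X_{t-s},\noise_{t-s},m_{t-s})$ via the fixed-point formula at time $s$. Your version is slightly more explicit about the time-shift bookkeeping $v=u-(t-s)$ that turns the running coefficient $W(t-u,\cdot)$ into $W(s-v,\cdot)$, but the argument is the same.
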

\begin{proof}
By definition of a Lipschitz solution
\begin{equation*}
\displaystyle W(t,x,\noise,\mu)=\esp{\nabla_x U_0(X_t,\noise_t,m_t)-\int_0^t D_x H(X_s,\noise_s,m_s,W(t-s,X_s,\noise_s,m_s))ds}.
\end{equation*}
Let us first remark that $(X_t,\noise_t,m_t)_{t\geq 0}$ satisfies the flow property. This is a natural consequence of the uniqueness of solutions. As a consequence, letting $(\mathcal{F}_{u})_{u\geq 0}$ be the completed filtration associated to $(B_u,B^\noise_u)_{u\geq 0}$, for any $s\leq t$, it holds that
\[\espcond{W_0(X_t,\noise_t,m_t)-\int_{t-s}^tD_x H(X_u,\noise_u,m_u,W(t-u,X_u,\noise_u,m_u))du}{\mathcal{F}_{t-s}}=W(s,X_{t-s},\noise_{t-s},m_{t-s}) \quad a.s.\]
This is obtained by first applying the flow property and then the definition of a Lipschitz solution. The dynamic programming principle then follows by taking the expectation on both sides. 
\end{proof}
So long as a Lipschitz solution $W$ to \eqref{eq: lip sol Pq} exists, we define the solution of the original problem \eqref{eq: ME Wq} by integrating along the characteristics of the equation with $\nabla_x U$ replaced with $W$. Which is to say $U$ is defined through
\begin{gather}
    \nonumber U(t,x,\noise,m)=\esp{U_0(x+\sqrt{2\sigma_x}B_t,\noise_t,m_t)-\int_0^t \tilde{H}(t-s,x+\sqrt{2\sigma_x}B_t,\noise_s,m_s)ds},\\
    \label{definition: U for Lipschitz solutions}
    d\noise_s=-b[W](t-s,\noise_s,m_s)+\sqrt{2\sigma_\noise}dB^\noise_s \quad \noise_0=\noise,\\
    \nonumber dm_s=\left(-\text{div}\left(D_pH(x,\noise_s,W(t-s,x,\noise_s,m_s),m_s)m_s\right)+\sigma_x\Delta_x m_s\right)ds\quad m_0=m,
\end{gather}
for $(B_s,B^\noise_s)_{s\geq 0}$ a Brownian motion and with the notation 
\[\tilde{H}(t,x,\noise,m)=H(x,\noise,W(t,x,\noise,m),m).\]
Formally we expect that $W=\nabla_x U$. At this point, it is not totally clear that this is indeed the case. Proving this result is the object of the following Lemma, legitimating the function $U$ defined by \eqref{definition: U for Lipschitz solutions} as the value function, solution of the master equation.
\begin{lemma}
\label{Lemma: gradient U=W}
    Assume Hypothesis \ref{hyp: Lipschitz Wq} holds, and let $U$ be defined through \eqref{definition: U for Lipschitz solutions} for a Lipschitz solution $W$ of \eqref{eq: lip sol Pq} defined on $[0,T_c)$. 
    Then $U$ is $C^1$ in $x$ and the following equality holds
    \[\forall t<T_c \quad \nabla_x U(t,x,\noise,m)=W(t,x,\noise,m) \text{ in } \reels^d\times\reels^n\times\mathcal{P}_q(\reels^d).\]
\end{lemma}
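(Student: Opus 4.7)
The plan is to establish (a) $C^1$-regularity of $U$ in $x$ by direct differentiation of the Feynman--Kac formula \eqref{definition: U for Lipschitz solutions}, and (b) the identification $\nabla_x U = W$ by showing that $V := \nabla_x U$ satisfies the same fixed-point equation defining $W$. Uniqueness from Theorem~\ref{lip sol b(p)} then yields the conclusion.

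For (a), the key observation is that in \eqref{definition: U for Lipschitz solutions} the processes $(\theta_s)$ and $(m_s)$ do not depend on the initial state $x$, so the $x$-dependence enters only through the affine shift $\bar X_s := x + \sqrt{2\sigma_x}B_s$. Under Hypothesis~\ref{hyp: Lipschitz Wq}, $U_0$ is $C^{1,1}$ in its first variable and $\tilde H(\tau,\cdot,\theta,m) = H(\cdot,\theta,m,W(\tau,\cdot,\theta,m))$ is Lipschitz in $y$ as a composition of Lipschitz maps. A differentiation-under-the-integral argument, combining dominated convergence with Rademacher's theorem for the a.e.\ pointwise derivative of $\tilde H$, yields that $U$ is $C^1$ in $x$ with
\[\nabla_x U(t,x,\theta,m) = \mathbb{E}\bigl[\nabla_x U_0(\bar X_t, \theta_t, m_t)\bigr] - \int_0^t \mathbb{E}\bigl[\nabla_y \tilde H(t-s, \bar X_s, \theta_s, m_s)\bigr]\, ds,\]
where by the Lipschitz chain rule $\nabla_y \tilde H = D_x H(\cdot, W) + D_p H(\cdot, W)\cdot\nabla_y W$ almost everywhere. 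When $\sigma_x > 0$, the Gaussian smoothing of the Brownian semigroup makes this step essentially standard; when $\sigma_x = 0$, one works directly with difference quotients and the Lipschitz chain rule.

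For (b), I would compare the above to the fixed-point formula defining $W$ (Definition~\ref{def: lipschitz sol W2}), which is built from the drifted characteristics $(Y_s,\vartheta_s,\nu_s)$ with drift $-D_pH(\cdot,W)$ in the state variable. The two representations differ by the presence of this drift and by the extra term $D_pH\cdot\nabla_yW$ in $\nabla_y\tilde H$; these differences cancel exactly via Girsanov's theorem. Under the probability measure $\mathbb{Q}$ with density
\[\frac{d\mathbb{Q}}{d\mathbb{P}} = \exp\!\Bigl(-\tfrac{1}{\sqrt{2\sigma_x}}\int_0^t D_p H(\bar X_s,\theta_s,m_s,W(t-s,\bar X_s,\theta_s,m_s))\cdot dB_s - \tfrac{1}{4\sigma_x}\int_0^t |D_p H|^2\, ds\Bigr),\]
the process $\bar X_s$ acquires the drift $-D_pH(\cdot,W)$ and hence coincides in law with $Y_s$; a stochastic integration-by-parts then transforms the $D_pH\cdot\nabla_yW$ term of step (a) into its Girsanov counterpart, which cancels it, leaving
\[\nabla_x U(t,x,\theta,m) = \mathbb{E}\bigl[\nabla_x U_0(Y_t,\vartheta_t,\nu_t)\bigr] - \int_0^t \mathbb{E}\bigl[D_x H(Y_u,\vartheta_u,\nu_u,W(t-u,Y_u,\vartheta_u,\nu_u))\bigr]\, du.\]
This is precisely the fixed-point equation defining $W$, and uniqueness in Theorem~\ref{lip sol b(p)} gives $\nabla_x U = W$.

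The main technical obstacle lies in step (b): justifying the integration-by-parts despite $W$ being merely Lipschitz, so that $\nabla_y W$ is only a bounded a.e.\ object. The cleanest way around this is mollification: replace $W$ by its spatial convolution $W^\varepsilon$, perform the (now classical) computation on the smooth data, and pass to the limit using the Lipschitz stability estimates of Lemma~\ref{Appendix lemma: W2 distance estimate} and dominated convergence, which apply uniformly in $\varepsilon$ thanks to the uniform Lipschitz bounds on $W^\varepsilon$. The fully degenerate case $\sigma_x = 0$ is a secondary nuisance, since one loses the Gaussian smoothing of step (a) and must handle differentiability via difference quotients directly, but no conceptual difficulty arises beyond a more careful bookkeeping.
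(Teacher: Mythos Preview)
Your strategy---differentiate the Feynman--Kac formula for $U$, then use Girsanov to match the resulting expression against the fixed-point defining $W$---is exactly the paper's, run in the opposite direction. The paper starts from the martingale $M_u = W(t-u,X_u,\theta_u,m_u)-\int_0^u D_xH\,ds$ along the drifted characteristics, applies Girsanov to remove the drift (picking up the covariation term $\int D_pH\cdot\nabla_xW\,du$), and identifies the result with $\nabla_xU$ via the heat-kernel gradient formula. Your step (b) is the inverse of this computation, so the core argument is the same.

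Where your write-up diverges is in the regularization, and here there is a genuine gap. You propose to mollify $W$ itself in the spatial variable. The problem is that $W^\varepsilon$ is no longer a Lipschitz solution, so the martingale property that drives the whole Girsanov computation is lost; you cannot ``perform the (now classical) computation'' on $W^\varepsilon$ because there is no martingale $M^\varepsilon$ to transform. The paper instead mollifies the \emph{data} $(\nabla_xU_0,D_xH,D_pH)$ in $(x,p)$, so that the corresponding $W^\varepsilon$ is a genuine Lipschitz solution---and, crucially, the paper first runs the fixed-point argument in a $C^1_x$ space (its Step~1) to show that under smooth data $W^\varepsilon$ is itself $C^1$ in $x$. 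This $C^1$ regularity is what makes the covariation $\langle M,Y\rangle_u=\int_0^u D_pH\cdot\nabla_xW\,ds$ well-defined (via a cited generalized covariation formula). One then passes to the limit using stability of Lipschitz solutions, not Lemma~\ref{Appendix lemma: W2 distance estimate}, which is about Fokker--Planck flows and does not control convergence of solutions under perturbation of the data.

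Your handling of $\sigma_x=0$ is also too optimistic. With no diffusion, $\bar X_s\equiv x$ and you are evaluating $\nabla_y\tilde H$ at a single fixed $x$; since $\tilde H$ is only Lipschitz, there is no reason the difference quotients converge at that particular point. The paper treats this case by a second layer of approximation $\sigma_x^\varepsilon=\varepsilon\downarrow 0$, combined with the data mollification and Arzel\`a--Ascoli on the uniformly $C^1_x$-bounded family $(U^\varepsilon)$.
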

\begin{proof}
We start by treating the case $\sigma_x>0$ and $(x,p)\mapsto (\nabla_x U_0(x,\noise,m),D_xH(x,\noise,m,p),D_pH(x,\noise,m,p))$ is $C^1$ uniformly in the other variables, we will extend the result to the general case at the end. 

\quad 

\noindent\textit{Step 1: under those condition U is $C^1$ in $x$}

We are going to show that under those conditions $s\mapsto \left(x\mapsto W(s,x,\cdot)\right)$ belongs to $C^0([0,t],C^1(\reels^d,\reels))$ for any $t<T_c$. Indeed, it suffices to notice that the fixed point definition of Lipschitz solution can be carried out in the Banach space 
\[C^{1}_x:=\{f:[0,t]\times \reels^d\times \reels^n\times \mathcal{P}_q(\reels^d)\to\reels^d \quad \|f\|_{Lip}<+\infty,\quad  f \text{ is } C^1 \text{ in } x\},\]
endowed with the norm $\|\cdot\|_{C^1_x}$
\[f\mapsto \underset{(s,x,\noise,\mu)\in[0,t]\times\reels^d\times \reels^n\times\mathcal{P}_q(\reels^d)}{\sup}\frac{|f(s,x,\noise,\mu)|}{1+|x|+|\noise|+\mathcal{W}_q(\mu,\delta_{0_{\reels^d}})}+\| f\|_{Lip}.\]
Let $\psi$ be defined as in Definition \ref{def: lipschitz sol W2} and consider
\[\phi: V\to \psi(D_p H(\cdot,V),b(\cdot,V),-D_x H(\cdot,V),\nabla_x U_0).\]
We claim that $\phi$  sends $C^{1}_x$ into itself. Let $V\in C^1_x$, the fact that $\phi(V)$ is Lipschitz has already been proven in \cite{lipschitz-sol}. Now observe that in \eqref{eq: Feynman-kac measure} $(\noise_s,m_s)_{s\geq 0}$ do not depend on $(X_s)_{s\geq 0}$. In light of the assumed regularity in $x$ of our coefficient, we may derive the whole equation with respect to the initial condition $x_0$ of $(X_s)_{s\geq 0}$ showing that $\nabla_x \phi(V)$ is indeed continuous (the wellposedness of the tangent process being trivial since the coefficients are Lipschitz $C^1$). The fixed point existence proof for Lipschitz solution is then done for the $\|\cdot\|_\infty$ as in \cite{lipschitz-sol}. 

\quad 

\noindent\textit{Step 2: equality of $\nabla_x U$ and $W$}

We start by fixing $t<T_c$. As long as $W$ stays Lipschitz, classic estimates on the moments of $(X_s,\noise_s)_{s\in[0,t]}$ given by \eqref{Def: Lipschitz solution b(p)} with respect to initial conditions hold. In \eqref{definition: U for Lipschitz solutions}, $(\theta_s,m_s)_{s\in[0,t]}$ do not depend on $(B_s)_{s\geq 0}$, hence the expectations can be computed separately. In particular this allows to see that $U$ depends on $x$ through a convolution with the heat kernel allowing to define its derivative through
\begin{equation}
\label{equation: representation formula heat kernel W}
\nabla_x U(t,x,\noise,m)=\esp{\nabla_x U_0(x+\sqrt{2\sigma_x}B_t,\noise_t,m_t)-\int_0^{t} \tilde{H}(t-s,x+\sqrt{2\sigma_x}B_s,\noise_s,m_s)\frac{B_s}{\sqrt{2\sigma_x} s} ds}.\end{equation}
From its definition as a Lipschitz solution, $W$ is given by
\[
\begin{array}{c}
\displaystyle W(t,x,\noise,m)=\esp{\nabla_x U_0(X_t,\noise_t,m_t)-\int_0^t D_x H(X_s,\noise_s,m_s,W(t-s,X_s,\noise_s,m_s))ds},\\
\displaystyle dX_s=-D_pH(X_s,\noise_s,m_s,W(t-s,X_s,\noise_s,m_s))ds+\sqrt{2\sigma_x}dB_s \quad X_0=x,\\
\end{array}
\]
 and $(M_s)_{s\in[0,t]}$ defined by
\[M_u=W(t-u,X_u,\noise_u,m_u)-\int_0^u D_xH(X_s,\noise_s,W(t-s,X_s,\noise_s,m_s),m_s) ds\]
is consequently a martingale with respect to the natural filtration $\mathcal{F}$ of the Brownian motion $(B_s,B^\noise_s)_{s\geq 0}$.
Let us now define $Y_u=\frac{1}{\sqrt{2\sigma_x}}\int_0^u D_p H(X_s,\noise_s,W(t-s,X_s,\noise_s,m_s),m_s)dB_s$ and the associated stochastic exponential $\mathcal{E}(Y_u)=Z_u$. Since coefficients are Lipschitz, Novikov's condition is satisfied (see \cite{Karatzas1998} Corollary 5.16), and so the martingale exponential $(Z_s)_{s\in[0,t]}$ is a true martingale. 
By Girsanov theorem $(Z_s)_{s\in[0,t]}$ define a new probability $\mathbb{Q}$ under which $(\tilde{M}_s)_{s\in[0,t]}$ defined as 
\[\tilde{M}_s=M_s-\langle M,Y\rangle_s\]
is a martingale and $(\tilde{B}_s)_{s\in[0,t]}$ is a Brownian motion with
\[\tilde{B}_u=B_u-\frac{1}{\sqrt{2\sigma_x}}\int_0^u D_p H(X_s,\noise_s,W(t-s,X_s,\noise_s,m_s),m_s)ds.\]
Because the integral of a measurable function always has bounded variation, the covariation of $M$ and $Y$ is given by the covariation between $Y$ and $(W(t-s,X_s,\noise_s,m_s))_{s\in[0,t]}$. It is then a consequence of \cite{covariation} Proposition 3.8 that we have 

\[\langle M,Y\rangle_u=\int_0^u D_pH(X_s,\noise_s,W(t-s,X_s,\noise_s,m_s),m_s)\cdot D_x W(t-s,X_s,\noise_s,m_s)ds. \]
 Using the fact that $\tilde{M}_0=M_0=W(t,x,\noise,m)$, we get that 
\[W(t,x,\noise,m)=\mathbb{E}^\mathbb{Q}\left[\nabla_x U_0(X_t,\noise_t,m_t)-\int_0^t (D_xH+D_pH\cdot D_x W)(t-s,X_s,\noise_s,m_s)ds \right].\]
Remark now that $(X_s)_{s\in[0,t]}$ is given by 
\[X_s=x+\sqrt{2\sigma_x}\tilde{B}_u.\]
By an integration by part against the heat kernel, we see that $W$ and $\nabla_x U$ are given by the same formula, which ends to show by uniqueness of Lipschitz solution that we have indeed 
\[\nabla_x U\equiv W.\]

\noindent\textit{Step 3: extension to general Lipschitz data}

We now still assume that $\sigma_x>0$, but remove the smoothness assumption on $(\nabla_x U_0,D_xH,D_pH)$. In this case, we introduce a $C^1$ regularization in $(x,p)$ only (which can be done with any smooth compactly supported kernel) $(\nabla_x U_0^\varepsilon,D_xH^\varepsilon,D_pH^\varepsilon)$ that converges locally uniformly to $(\nabla_xU_0,D_xH,D_pH)$ and consider the associated sequence of Lipschitz solution $W^\varepsilon$. By the above proof, we have $W^\varepsilon=\nabla_x U^\varepsilon$ for any $\varepsilon>0$. Since we can find an existence time which depends only on the Lipschitz constant of the data for Lipschitz solution, there exists $T>0$ such that for any $\varepsilon$, $W^\varepsilon$ is defined at least on $[0,T)$ with a $C^1_x$ norm independent of $\varepsilon$. By Arzelà–Ascoli theorem, $U^\varepsilon$ converges locally uniformly toward a $C^1$ function in $x$ along a subsequence. By stability of Lipschitz solutions, its gradient is a Lipschitz solution of \eqref{eq: lip sol Pq} on $[0,T)$. Since such a solution is unique, $(U_\varepsilon)_{\varepsilon>0}$ has a limit, and the whole sequence converges locally uniformly to this element. Assume now that $T<T_c$, for $T_c$ the maximal time of existence for Lipschitz solutions to \eqref{eq: lip sol Pq} This implies that there exists a constant C such that for any $\eta>0$
\[\|\nabla_x U(T-\eta,\cdot)\|_{Lip}\leq C.\]
by considering a Lipschitz solution with initial condition 
\[W_0=\nabla_xU_0(T-\eta,\cdot),\]
we can extend the $C^1$ function $U$ beyond $T$ by repeating the above procedure. This bootstrapping argument allows us to extend this solution up until $T_c$ which guarantees that 
\[\forall t<T_c \quad \nabla_x U(t,\cdot)=W(t,\cdot).\]
Whenever $\sigma_x=0$ the idea is the same, but we furthermore consider a sequence of Lipschitz solutions with $\sigma^\varepsilon_x=\varepsilon$.
\end{proof}
\subsection{Monotonicity in mean field games}
In MFG, uniqueness of solutions appears as a key element to obtain existence and regularity of solutions to the master equation. In this paragraph we present two notions of monotonicity under which long time uniqueness of solutions is known.

\quad

\noindent \textit{a) Flat monotonicity}
\quad
\begin{definition}
A function $U:\reels^d\times\mathcal{P}(\reels^d)\to \reels$ is said to be flat monotone or just monotone if it is monotone seen as a function of measure only (ie $U:\mathcal{P}(\reels^d)\to\left(\reels^d\to \reels\right)$), which translates to 
\[\forall (\mu,\nu)\in\left(\mathcal{P}(\reels^{d})\right)^2 \quad \langle U(\cdot,\mu)-U(\cdot,\nu),\mu-\nu\rangle \geq 0,\]
where the bracket here design the duality product, which is to say for all continuous bounded function $f$
\[\langle f,\mu\rangle=\int_{\reels^d} f(x)\mu(dx).\]
\end{definition}
This notion of monotonicity is what leads to the so called Lasry-Lions monotone regime. Uniqueness and wellposedness were obtained in this regime for the MFG system \cite{LasryLionsMFG,Lions-college} and directly at the level of the master equation \cite{Lions-college,convergence-problem} for games with common noise. We extend this setting to \eqref{eq: general ME} in section \ref{section LL monotone solutions}.

\quad

\noindent \textit{b) $L^2$ monotonicity}
\quad

In contrast, we now introduce $L^2-$monotonicity a concept closely related to displacement monotonicity \cite{disp-monotone-1,disp-monotone-2} and to Lions' Hilbertian lifting
\begin{definition}
A function $U:\reels^d\times\mathcal{P}_2(\reels^d)\to \reels^d$ is $L^2-$monotone if 
\[\forall (X,Y)\in\left(L^2(\Omega,\reels^d)\right)^2\quad \esp{(U(X,\mathcal{L}(X))-U(Y,\mathcal{L}(Y)))\cdot (X-Y)}\geq 0.\]
\end{definition}
Consider Lions' lift of $U$ on the space of random variables $\hat{U}$ defined by
\[\hat{U}:\left\{
\begin{array}{l}
     L^2(\Omega,\reels^d)\to L^2(\Omega,\reels^d),  \\
     X\mapsto U(X,\mathcal{L}(X)).
\end{array}
\right.
\]
The $L^2-$monotonicity of $U$ is equivalent to the monotonicity of $\hat{U}$ in the classical sense on the Hilbert space $(L^2(\Omega,\reels^d),\langle \cdot,\cdot\rangle|_{L^2})$, with the canonical inner product 
\[\langle \cdot,\cdot\rangle|_{L^2}:\left\{
\begin{array}{l}
     L^2(\Omega,\reels^d)\times L^2(\Omega,\reels^d)\to \reels,  \\
     (X,Y)\mapsto \esp{X\cdot Y},
\end{array}
\right.
\]
hence the name. Let us remark that $L^2-$monotonicity is a strong property that also implies monotonicity in a more traditional sense 
\begin{lemma}
    Let $U:\reels^d\times \mathcal{P}_2(\reels^d)\to \reels^d$ be a continuous $L^2-$monotone function then $\forall \mu\in\mathcal{P}_2(\reels^d)$, $x\mapsto U(x,\mu)$ is monotone in the classical sense, which is to say    
    \[\forall (x,y)\in (\reels^d)^2 \quad (U(x,\mu)-U(y,\mu))\cdot (x-y)\geq 0.\]
\end{lemma}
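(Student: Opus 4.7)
The plan is to reduce this to a direct application of Proposition \ref{prop: expectation to pointwise}. Define the auxiliary function
\[
f:\FuncDef{\mathcal{P}_2(\reels^d)\times \reels^{2d}}{\reels}{(\mu,x,y)}{(U(x,\mu)-U(y,\mu))\cdot(x-y).}
\]
I will check the three hypotheses of Proposition \ref{prop: expectation to pointwise} for $f$, and the conclusion is exactly the pointwise inequality we want.

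First, $f$ is continuous on $\mathcal{P}_2(\reels^d)\times \reels^{2d}$ because $U$ is assumed continuous. Second, $f$ is symmetric in $(x,y)$ since
\[
(U(y,\mu)-U(x,\mu))\cdot(y-x)=(U(x,\mu)-U(y,\mu))\cdot(x-y),
\]
and clearly $f(\mu,x,x)=0$. The only non-trivial point is the integral inequality: for any $\mu\in\mathcal{P}_2(\reels^d)$ and any $X,Y\in L^2(\Omega,\reels^d)$ with $\mathcal{L}(X)=\mathcal{L}(Y)=\mu$, one has
\[
\esp{f(\mu,X,Y)}=\esp{(U(X,\mathcal{L}(X))-U(Y,\mathcal{L}(Y)))\cdot(X-Y)}\geq 0,
\]
which is precisely the definition of $L^2$-monotonicity applied to $(X,Y)$. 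Proposition \ref{prop: expectation to pointwise} then yields $f(\mu,x,y)\geq 0$ pointwise, which is the stated claim.

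There is essentially no real obstacle here: the whole argument is the translation from the expectation-level inequality (monotonicity of the Hilbertian lift) to the pointwise inequality, which is exactly what Proposition \ref{prop: expectation to pointwise} is designed to accomplish. The only subtlety worth noting is that one cannot in general realize $x$ and $y$ as values of random variables with law $\mu$ on sets of positive probability (unless $\mu$ has atoms at both points), so a naive choice of $(X,Y)$ does not directly produce the pointwise inequality; this is precisely why Proposition \ref{prop: expectation to pointwise} (with its built-in approximation argument) is needed rather than a direct construction.
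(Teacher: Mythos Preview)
Your proof is correct and follows exactly the approach the paper indicates: the paper's own proof is the single sentence ``This is a simple consequence of Proposition \ref{prop: expectation to pointwise},'' and you have simply spelled out that consequence in detail, verifying continuity, symmetry, vanishing on the diagonal, and the expectation inequality for $f(\mu,x,y)=(U(x,\mu)-U(y,\mu))\cdot(x-y)$.
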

This is a simple consequence of Proposition \ref{prop: expectation to pointwise}. Wellposedness of the master equation with $L^2-$monotone data was obtained in \cite{Lions-college} for MFG without idiosyncratic noise with the Hilbertian approach. Wellposedness was also obtained for more general MFG under displacement monotonicity \cite{disp-monotone-1,disp-monotone-2}. 
\begin{definition}
    A function $F:\reels^d\times \mathcal{P}_2(\reels^d)\to \reels$ is displacement monotone if and only if its gradient in the space variable $\nabla_x F$ is well defined and $L^2-$monotone. 
\end{definition}

The reason we introduce the concept of $L^2-$monotonicity is that in Section \ref{section L2 monotone solutions}, in which we extend those ideas to the context of \eqref{eq: general ME}, we do not need to make the assumption that our data are gradients, our method of proof being heavily inspired by the Hilbertian formulation of MFG. In fact, one goal of Section \ref{section L2 monotone solutions} is to show that the Hilbertian approach and the displacement monotone setting are one and the same in that they yield the same regularity theory, something which is not always made clear in the literature.

\section{Wellposedness in the flat monotone regime}
\label{section LL monotone solutions}
For simplicity's sake, we place ourselves on the $d$ dimensional flat torus $\tor^d$, but the results we obtain extend easily to the whole space if growth conditions are imposed at infinity. In this setting we work with the $\mathcal{W}_1$ Wasserstein distance and wellposedness of the master equation \eqref{eq: general ME} will be obtained in $\mathcal{P}(\tor^d)$. Throughout most of this section we will also assume that $b$ does not depend on $\nabla_x U$, however the problem remains just as challenging as the main difficulty comes from the coupled evolution of the noise and distribution of players. The master equation we now consider is
\begin{equation}
\label{eq: ME W2 with b periodic}
\left\{
\begin{array}{c}
\displaystyle \partial_t U+H(x,\noise,m,\nabla_xU)+b(\noise,m)\cdot\nabla_\noise U-\sigma_x\Delta_x U-\sigma_\noise\Delta_\noise U\\
\displaystyle-\sigma_x\int \text{div}_{y}[D_mU](t,x,\noise,m,y)m(dy)+\int D_m U\cdot D_p Hdm,\\
\text{ for } (t,x,\noise,m) \in (0,T)\times \mathbb{T}^d \times\mathbb{\reels}^n\times  \mathcal{P}(\mathbb{T}^d),\\
U(0,x,\noise,m)=U_0(x,\noise,m) \text{ for } (x,\noise,m) \in \mathbb{T}^d\times\mathbb{\reels}^n \times  \mathcal{P}(\mathbb{T}^d).
\end{array}
\right.
\end{equation}

We make the following assumption throughout this section
\begin{hyp}
\label{hyp: Lipschitz W1}
$(x,\noise,m,p)\mapsto H(x,\noise,m,p)$ is continuous on $\tor^d\times\reels^n\times\mathcal{P}(\tor^d)\times\reels^d$, $\nabla_x U_0,D_xH,D_p H$ exists and 
\[\exists C>0 \quad \forall (x,y)\in(\tor^d)^2, (\noise,\noiseb)\in(\reels^n)^2, (\mu,\nu)\in(\mathcal{P}(\tor^d))^2, (u,v)\in(\reels^d)^2,\]
    \begin{enumerate}
    \item[-] $|\nabla_x U_0(x,\noise,\mu)-\nabla_xU_0(y,\noiseb,\nu)|\leq C\left(|x-y|+|\noise-\noiseb|+\mathcal{W}_1(\mu,\nu)\right).$
        \item[-] $|D_x H(x,\noise,\mu,u)-D_x H(y,\noiseb,\nu,v)|\leq C\left(|x-y|+|\noise-\noiseb|+|u-v|+\mathcal{W}_1(\mu,\nu)\right).$
        \item[-]$|D_p H(x,\noise,\mu,u)-D_pH(y,\noiseb,\nu,v)|\leq C\left(|x-y|+|\noise-\noiseb|+|u-v|+\mathcal{W}_1(\mu,\nu)\right).$
        \item[-]
        $|b(\noise,\mu)-b(\noiseb,\nu)|\leq C\left(|\noise-\noiseb|+\mathcal{W}_1(\mu,\nu)\right).$
    \end{enumerate}
\end{hyp}
Those are only analogous of Hypothesis \ref{hyp: Lipschitz Wq} for the $\mathcal{W}_1$ Wasserstein distance. Following Section \ref{section Lipschitz solutions}, under Hypothesis \ref{hyp: Lipschitz W1} there always exists a $\mathcal{W}_1$ Lipschitz solution to \eqref{eq: ME W2 with b periodic} on sufficiently small time intervals. 
Throughout this section, we will also use the assumption
 \begin{hyp}
 \label{Hyp: boundedness + linear growth}
    There exists a constant C such that $\forall (x,\noise,\mu,p)\in \tor^d\times \reels^n\times\mathcal{P}(\tor^d)\times\reels^d$,
    \begin{enumerate}
        \item[-] $|\nabla_x U_0(x,\noise,\mu)|\leq C,$
        \item[-] $|D_xH(x,\noise,\mu,p)|+|D_p H(x,\noise,\mu,p)|\leq C(1+|p|),$
        \item[-] $\sigma_x>0$
    \end{enumerate}
 \end{hyp}
 In optimal control this is a somewhat usual assumption that is equivalent to saying the Hamiltonian $(x,\noise,\mu,p)\mapsto H(x,\noise,\mu,p)$ is Lipschitz in $(x,p)$ locally in $p$ and has at most quadratic growth in this last variable and that the noise is non degenerate. Obviously the part on the initial condition implies that the terminal cost of the optimal control problem solved by an individual player is Lipschitz in $x$. Under those conditions the parabolic regularity stemming from the idiosyncratic noise gives the following a priori estimate
\begin{lemma}
\label{Lemma: heat kernel estimate for derivative in x}
    Under Hypothesis \ref{hyp: Lipschitz W1} and Hypothesis \ref{Hyp: boundedness + linear growth},  for any $T\geq 0$ there exists a constant $C_T$ such that for any Lipschitz solution $W$ to \ref{eq: lip sol Pq} on $[0,T_c)$ 
    \[\underset{t\leq T\wedge T_c}{\sup}\|D_xW(t,\cdot)\|_\infty\leq C_T,\]
    holds
\end{lemma}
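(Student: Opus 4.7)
The plan is to leverage the parabolic regularization coming from $\sigma_x>0$ in order to gain one derivative in the $x$-direction, and to conclude with the singular Grönwall Lemma~\ref{Lemma: backward gronwall}. Since $W=\nabla_x U$ by Lemma~\ref{Lemma: gradient U=W}, bounding $\|D_xW(t,\cdot)\|_\infty$ reduces to bounding $\|\nabla_x^2 U(t,\cdot)\|_\infty$. As a preliminary step I would establish a uniform sup-norm bound on $W$ itself: starting from the dynamic programming formula \eqref{Def: Lipschitz solution b(p)}, the boundedness of $\nabla_x U_0$ and the linear growth in $p$ of $D_xH$ provided by Hypothesis~\ref{Hyp: boundedness + linear growth} yield $\|W(t,\cdot)\|_\infty\leq C+C\int_0^t(1+\|W(t-s,\cdot)\|_\infty)ds$, from which classical Grönwall gives $\|W(t,\cdot)\|_\infty\leq C_T$ on $[0,T\wedge T_c]$.

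The crucial observation for the second derivative is that, in \eqref{definition: U for Lipschitz solutions}, the flow $(\noise_s,m_s)$ is driven only by $B^\noise$ and is independent of the $x$-Brownian motion $B$. Conditioning on $B^\noise$ rewrites $U$ as an expectation of heat-semigroup convolutions
\begin{equation*}
U(t,x,\noise,m)=\mathbb{E}\left[P_{\sigma_x t}\bigl(U_0(\cdot,\noise_t,m_t)\bigr)(x)-\int_0^t P_{\sigma_x s}\bigl(\tilde H(t-s,\cdot,\noise_s,m_s)\bigr)(x)ds\right],
\end{equation*}
with $P_r$ the heat semigroup on $\tor^d$ and $\tilde H(t,y,\noise,m):=H(y,\noise,m,W(t,y,\noise,m))$. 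Differentiating twice in $x$ under the expectation,
\begin{equation*}
\nabla_x^2 U(t,x,\noise,m)=\mathbb{E}\left[\nabla_x^2 P_{\sigma_x t}\bigl(U_0(\cdot,\noise_t,m_t)\bigr)(x)-\int_0^t \nabla_x^2 P_{\sigma_x s}\bigl(\tilde H(t-s,\cdot,\noise_s,m_s)\bigr)(x)ds\right],
\end{equation*}
the first term is bounded by $\|\nabla_x U_0\|_{Lip(x)}\leq C$ via the commutation $\nabla_x^2 P_r U_0=\nabla_x P_r(\nabla_x U_0)$ and the standard estimate $\|\nabla_x P_r g\|_\infty\leq\|g\|_{Lip}$. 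The second term is controlled by the Bismut--Elworthy type estimate $\|\nabla_x^2 P_r f\|_\infty\leq C\|\nabla_x f\|_\infty/\sqrt{r}$ which, combined with the chain-rule identity $\nabla_y\tilde H=D_xH(y,\cdot,W)+D_pH(y,\cdot,W)\cdot D_yW$, the linear growth in $p$ of $D_xH,D_pH$, and the preliminary bound on $\|W\|_\infty$, gives
\begin{equation*}
\bigl\|\nabla_x^2 P_{\sigma_x s}\tilde H(t-s,\cdot,\noise_s,m_s)\bigr\|_\infty\leq \frac{C_T\bigl(1+\|D_xW(t-s,\cdot)\|_\infty\bigr)}{\sqrt{s}}.
\end{equation*}

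Setting $g(t):=\|D_xW(t,\cdot)\|_\infty$ and performing the change of variables $u=t-s$ in the resulting bound, these estimates combine into the singular integral inequality
\begin{equation*}
g(t)\leq C_T+C_T\int_0^t\frac{g(u)}{\sqrt{t-u}}du,\qquad t\in[0,T\wedge T_c),
\end{equation*}
where the contribution $2C_T\sqrt{T}$ coming from $\int_0^t 1/\sqrt{s}\,ds$ has been absorbed into $C_T$. Lemma~\ref{Lemma: backward gronwall} applied with $p=1/2$ then delivers the desired uniform bound. The main technical subtlety is that $D_xW$ and $\nabla_x\tilde H$ are only defined almost everywhere for a general Lipschitz solution; I would handle this by running the whole argument first on the smooth regularizations $(\nabla_x U_0^\varepsilon, D_xH^\varepsilon, D_pH^\varepsilon)$ introduced in Step~3 of the proof of Lemma~\ref{Lemma: gradient U=W}, obtaining an $\varepsilon$-independent bound on the approximate solutions, and then passing to the limit by the stability of Lipschitz solutions.
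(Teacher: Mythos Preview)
Your proposal is correct and follows essentially the same approach as the paper: first bound $\|W\|_\infty$ via Gr\"onwall, then exploit the heat-kernel/Bismut--Elworthy representation to gain one $x$-derivative at the cost of a factor $1/\sqrt{s}$, and conclude with the singular Gr\"onwall Lemma~\ref{Lemma: backward gronwall}. The only cosmetic difference is that the paper works directly with the difference quotient $|W(t,x,\cdot)-W(t,y,\cdot)|/|x-y|$ from formula~\eqref{equation: representation formula heat kernel W}, which sidesteps the regularization step you append at the end.
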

\begin{proof}
We begin by remarking that under Hypothesis \ref{Hyp: boundedness + linear growth}, $W$ stays bounded over time. Indeed, using the representation formula \eqref{Def: Lipschitz solution b(p)} combined with the linear growth of $D_xH$ in $p$ yields:
\[\|W(t,\cdot)\|_\infty\leq \|W_0\|_\infty+C\int_0^t (1+\|W(t-s,\cdot)\|_\infty ds.\]
From there on, an application of Grönwall's Lemma allows us to conclude to 
\[\|W(t,\cdot)\|_\infty\leq (\|W_0\|_\infty+C t)e^{Ct}\]
for some $C$ depending on $H$.
The key point is that with this bound on $W$ we may make use of the fact that $H$ is locally Lipschitz in $(x,p)$. Indeed, fix $T>0$ and let $t\leq T$, using the notation 
\[\tilde{H}(t,x,\noise,m)=H(x,\noise,W(t,x,\noise,m)),\]
we get from \eqref{equation: representation formula heat kernel W}: 
\begin{align*}
\,|W(t,x,\noise,m)\!-\!W(t,y,\noise,m)|&\!\leq \!\|\nabla_x U_0\|_{Lip}|x\!-\!y|\\
&+\esp{\left|\int_0^t (\tilde{H}(t\!-\!s,x\!+\!\sqrt{2\sigma_x}B_s,\noise_s,m_s)\!-\!\tilde{H}(t\!-\!s,y\!+\!\sqrt{2\sigma_x}B_s,\noise_s,m_s))\frac{B_s}{\sqrt{2\sigma_x }s}ds\right|}.
\end{align*}
Consequently, there exists a constant C depending on $H,T,\sigma_x,b,U_0$ such that

\[\frac{|W(t,x,\noise,m)-W(t,y,\noise,m)|}{|x-y|}\leq \|\nabla_x U_0\|_{Lip}+C\left( 1+\int_0^t \frac{1}{\sqrt{s}}\|D_xW(t-s,\cdot)\|_\infty ds\right).\]
Taking the supremum over $x\neq y$ combined with a simple change of variable yields 
\[\|D_xW(t,\cdot)\|_\infty \leq C+\|\nabla_xU_0\|_{Lip}+C\int_0^t\frac{1}{\sqrt{t-s}}\|D_xW(s,\cdot)\|_\infty ds.\]
We may conclude by using Lemma \ref{Lemma: backward gronwall} to the existence of a constant $C_T$ such that 
\[\forall t\leq T \quad \|D_xW(t,\cdot)\|_\infty \leq C_T.\]
\end{proof}
In MFG, it is classical that parabolic regularity yields estimates \cite{convergence-problem}. Essentially, Lemma \ref{Lemma: heat kernel estimate for derivative in x} guarantees that under Hypothesis \ref{Hyp: boundedness + linear growth} the Lipschitz norm of $W$ with respect to the state variable $x$ stays bounded over time. Since Lipschitz solutions are well defined so long as their Lipschitz norm do not blow up, this ensure any explosive behavior will not come from $x$ as soon as $\sigma_x>0$. Under this assumption, we can focus on showing that $W$ stays Lipschitz with respect to the measure and noise argument, which are more problematic. 
\begin{remarque}
While the presence of non-degenerate idiosyncratic noise (ie $\sigma_x>0$), is a reasonable and practical assumption, it is not a necessary assumption. If we assume this noise to be degenerate, estimates on the space variable can be recovered under different structural conditions on the Hamiltonian \cite{rockafellar,degenerate-idiosyncratic}. To account for such possible extensions, all intermediary lemmas in Section \ref{subsection: existence monotone solutions} are proved without the assumption that $\sigma_x>0$.
\end{remarque}
\subsection{Long time existence of Lipschitz solution}
\label{subsection: existence monotone solutions}
\subsubsection{Autonomous noise process}

\quad 

In this section, we consider the case of an autonomous noise process, 
\[b:\noise\mapsto b(\noise).\]
Because in such situation the noise evolves independently of the other variables and can essentially be seen as a randomization of coefficients, we expect that previous existence theorems for smooth solutions \cite{convergence-problem} could be adapted to this setting by studying the associated MFG system. We take another approach, though the method of proof is still heavily based on an argument of propagation of monotonicity for solutions of the master equation presented in \cite{Lions-college,convergence-problem}. We carry out a proof directly at the level of the master equation, using the notion of Lipschitz solutions. This will be insightful for more general $b$ that we shall consider later on. To make the analysis easier we also make the following assumption
\begin{hyp}
\label{hyp: separated Hamiltonian +LL monotonicity}
    \[H(x,\noise,m,p)=\bar{H}(x,\noise,p)-f(x,\noise,m).\]
Furthermore, $\bar{H}$ is $\alpha-$convex in $p$ for some $\alpha_H>0$ and $U_0,f$ are flat monotone, which is to say that
\[\forall \noise\in\reels^n,  (\mu,\nu)\in\left(\mathcal{P}(\mathbb{T}^d)\right)^2 \quad \langle g(\cdot,\noise,\mu)-g(\cdot,\noise,\nu),\mu-\nu\rangle \geq 0,\]
for $g=U_0$ or $g=f$.
\end{hyp}
As is already well-known, the Hamiltonian being separated in the gradient and measure argument is not a necessary condition, some more technical examples of sufficient conditions were given in Pierre Louis Lions's Lectures at Collège de France. It is however a convenient setting to prove the following lemma, which consists in an adaptation of Theorem 4.3 (or alternatively Proposition 3.2 for a version without common noise) of \cite{convergence-problem} to Lipschitz solutions. 
\begin{lemma}
\label{lemma: monotonicity estimate in wasserstein 1 b(p)}
Under Hypotheses \ref{hyp: Lipschitz W1} and \ref{hyp: separated Hamiltonian +LL monotonicity}, if $b$ depends on $\noise$ only and $U$ is a Lipschitz solution of \eqref{eq: ME W2 with b periodic} on $[0,T_c)$, the following inequality holds 
\begin{gather*}
\forall t<T_c \quad (\noise,\mu,\nu)\in\reels^n\times \left(\mathcal{P}(\tor^{d})\right)^2,\\
\langle U(t,\cdot,\noise,\mu)-U(t,\cdot,\noise,\nu),\mu-\nu \rangle \geq \alpha_H\mathbb{E}\left[\int_0^t\!\int_{\mathbb{T}^d}\left|\nabla_x U(t-s,x,\noise_s,\mu_s)\!-\!\nabla_x U(t-s,x,\noise_s,\nu_s)\right|^2(\mu_s\!+\!\nu_s)(dx)ds \right]\!.
\end{gather*}
with 
\[
\left\{\begin{array}{ll}
d\noise_s=b(\noise_s)ds+\sqrt{2\sigma_\noise}dB^\noise_s& \noise_{s=0}=\noise,\\
d\mu_s=\left(-\text{div}\left(F(x,\noise_s,\mu_s,W(t-s,x,\noise_s,\mu_s))\right)+\sigma_x \Delta_x \mu_s\right)ds & \mu_{s=0}=\mu,\\
d\nu_s=\left(-\text{div}\left(F(x,\noise_s,\nu_s,W(t-s,x,\noise_s,\nu_s))\right)+\sigma_x \Delta_x \nu_s\right)ds & \nu_{s=0}=\nu,\\
\end{array}
\right.
\]
\end{lemma}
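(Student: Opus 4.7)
The natural approach is a propagation-of-monotonicity argument adapted from the classical Lasry--Lions computation \cite{convergence-problem}, carried out directly at the level of Lipschitz solutions. Define
\[
\Phi(s) := \mathbb{E}\Big[\int_{\tor^d}\bigl(U(t-s,x,\noise_s,\mu_s) - U(t-s,x,\noise_s,\nu_s)\bigr)(\mu_s - \nu_s)(dx)\Big].
\]
Then $\Phi(0)$ is exactly the left-hand side of the claimed inequality, while
\[
\Phi(t) = \mathbb{E}\big[\langle U_0(\cdot,\noise_t,\mu_t) - U_0(\cdot,\noise_t,\nu_t),\, \mu_t-\nu_t\rangle\big] \geq 0
\]
by flat monotonicity of $U_0$. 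The hypothesis that $b$ depends on $\noise$ alone is crucial here: $(\noise_s)$ is autonomous, so the same realization appears in the dynamics of both $\mu_s$ and $\nu_s$, making the boundary comparison at $s=t$ pointwise in $\noise_t$. It therefore suffices to show
\[
\Phi'(s) \leq -\alpha_H\,\mathbb{E}\Big[\int_{\tor^d}|\nabla_x U(t-s,x,\noise_s,\mu_s) - \nabla_x U(t-s,x,\noise_s,\nu_s)|^2(\mu_s + \nu_s)(dx)\Big].
\]

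To compute $\Phi'$, I apply It\^o's formula to $s \mapsto U(t-s,x,\noise_s,\mu_s)$ and replace $\partial_t U$ using the master equation \eqref{eq: ME W2 with b periodic}. The drift plus It\^o correction $\nabla_\noise U \cdot b + \sigma_\noise \Delta_\noise U$ coming from $\noise_s$ cancels the corresponding terms in the equation; the contribution from $\mu_s$ produces $\int D_m U \cdot D_p H\, d\mu_s + \sigma_x \int \mathrm{div}_y[D_mU]\, d\mu_s$, which cancels the mean-field coupling terms; and the $\Delta_x U$ term is absorbed after integrating against $\mu_s - \nu_s$. The martingale part vanishes in expectation, and, after the symmetric computation for $\nu_s$, the identity reorganizes, using $H = \bar H - f$, into
\[
\Phi'(s) = -\mathbb{E}\big[\langle f(\cdot,\noise_s,\mu_s) - f(\cdot,\noise_s,\nu_s),\, \mu_s-\nu_s\rangle\big] - \mathbb{E}\Big[\int_{\tor^d} \Delta^{\bar H}\,(\mu_s+\nu_s)(dx)\Big],
\]
where $\Delta^{\bar H}$ gathers the convexity defects $\bar H(x,\noise_s,q) - \bar H(x,\noise_s,p) - D_p\bar H(x,\noise_s,p)\cdot(q-p)$ at the two relevant pairs $(\nabla_x U^\mu, \nabla_x U^\nu)$. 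The first term is $\leq 0$ by flat monotonicity of $f(\cdot,\noise_s,\cdot)$, applied pointwise in $\noise_s$. The $\alpha_H$-convexity of $\bar H$ forces $\Delta^{\bar H} \geq \alpha_H|\nabla_x U^\mu - \nabla_x U^\nu|^2$, and integrating the resulting differential inequality on $[0,t]$ with $\Phi(t) \geq 0$ gives the claim.

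The main technical obstacle is that Lipschitz solutions do not a priori carry the measure derivatives $D_m U$, $D_y D_m U$ needed to make the It\^o/Fokker--Planck computation rigorous. Following the strategy already used in Section \ref{section Lipschitz solutions}, I would regularize $(U_0, f, \bar H, b)$ by a smoothing that preserves both flat monotonicity and $\alpha_H$-convexity, establish the estimate for the resulting classical solutions (whose existence and smoothness follow from \cite{convergence-problem}), and then pass to the limit. The gradient bound of Lemma \ref{Lemma: heat kernel estimate for derivative in x}, together with the stability of Lipschitz solutions under perturbation of the data, ensures that $\nabla_x U^\varepsilon \to \nabla_x U$ locally uniformly and allows the integral inequality to survive in the limit.
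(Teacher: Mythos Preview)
Your formal computation of $\Phi'(s)$ is correct and is exactly the classical Lasry--Lions argument. The paper, however, takes a different route that bypasses the regularization step entirely for $\sigma_x>0$.

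Instead of differentiating $\Phi$ --- which, as you note, requires $D_mU$ --- the paper exploits the probabilistic representation \eqref{definition: U for Lipschitz solutions} that \emph{defines} $U$ as a Lipschitz solution. The process $M^\mu_s = U(t-s,X_s,\noise_s,\mu_s) - \int_0^s H^\mu_u\,du$ (with $X_s=x+\sqrt{2\sigma_x}B_s$) is a martingale, to which one applies the Girsanov change associated to $Y^\mu_s = -\tfrac{1}{\sqrt{2\sigma_x}}\int_0^s D_p\bar H(X_u,\noise_u,\nabla_x U^\mu_u)\,dB_u$. Under the new measure $\mathbb{Q}$, $X_s$ becomes the characteristic driven by $\mu_s$, so integrating the resulting identity for $U(t,x,\noise,\mu)-U(t,x,\noise,\nu)$ against $\mu(dx)$ turns $\mathbb{E}^{\mathbb{Q}}$-expectations into integrals against $\mu_s$. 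Performing the symmetric computation with $Y^\nu$ and subtracting yields precisely the convexity-of-$\bar H$ and monotonicity-of-$f$ terms you obtained, but the only derivative of $U$ that ever appears is $\nabla_xU=W$, which exists by construction.

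What each approach buys: your route is the most transparent at the PDE level and would be the natural choice had smooth solutions been available, but the appeal to \cite{convergence-problem} is heavier than you suggest --- that reference requires data smooth in the measure argument, does not include the extra variable $\noise$, and you must justify that the regularized classical solutions exist on all of $[0,t]$ for every $t<T_c$ \emph{before} the a~priori estimate has been established, which is close to circular. The Girsanov argument works directly for Lipschitz solutions when $\sigma_x>0$; only the degenerate case $\sigma_x=0$ is handled by vanishing viscosity, and even there no regularization of the data is needed.
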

\begin{proof}
We start by assuming $\sigma_x>0$. Let us recall that $U$ is given by the Feynman-Kac representation \eqref{definition: U for Lipschitz solutions},
and that $(M_s^\mu)_{s\in[0,t]}$ given by
\begin{gather*}M^\mu_s=U(t-s,X_s,\noise_s,\mu_s)-\int_0^s H(X_u,\noise_u,W(t-u,X_u,\noise_u,\mu_u),\mu_u)du,\\
d\noise_s=b(\noise_s)ds+\sqrt{2\sigma_\noise}dB^\noise_s,\\
d\mu_s=\left(-\text{div}\left(F(x,\noise_s,\mu_s,W(t-s,x,\noise_s,\mu_s))\right)+\sigma_x \Delta_x \mu_s\right)ds\\
X_s=x+\sqrt{2\sigma_x}B_s,
\end{gather*}
with $\mu_{s}|_{s=0}=\mu$, is a martingale. Of course, it follows that $(M_s^\mu-M_s^\nu)_{s\in[0,t]}$ is also a martingale. Let us now define the change of probability $\frac{d\mathbb{P}}{d\mathbb{Q}}$ associated to the martingale exponential $\mathcal{E}(Y)$ for the process 
\[Y^\mu_s= -\frac{1}{\sqrt{2\sigma_x}}\int_0^s D_p H(t-u,X_u,\noise_u,\mu_u)dB_u.\]
Following a proof similar to the one of Lemma \ref{Lemma: gradient U=W}, we have 
\begin{align}
\label{equation: U^mu-U^nu under Q}
U(t,x,\noise,\mu)-U(t,x,\noise,\nu)&=\mathbb{E}^\mathbb{Q}\left[U_0(X_t,\noise_t,\mu_t)-U_0(X_t,\noise_t,\nu_t)\right]\\
\nonumber &+\mathbb{E}^\mathbb{Q}\left[\int_0^t\left( H^\nu_s-H^\mu_s+D_pH^\mu_s\cdot(\nabla_x U^\mu_s-\nabla_x U^\nu_s)\right)ds\right]
\end{align}
with the notation $H^\mu_s=H(t-s,X_s,\noise_s,\nabla_x U^\mu_{s})$ and $\nabla_x U^\mu_s=\nabla_x U(t-s,X_s,\noise_s,\mu_s)$. Let us also remark that the dynamic of $(\mu_s,\nu_s,\noise_s)_{s\in[0,t]}$ does not change under $\mathbb{Q}$ and that 
\[X_s=x-\int_0^s D_p\tilde{H}(t-u,X_u,\noise_u,\mu_u)du+\sqrt{2\sigma_x}\tilde{B}_s\]
for $(\tilde{B}_s)_{s\geq 0}$ a Brownian motion under $\mathbb{Q}$. We now make the following observation: for any continuous bounded function $g$ and $s\in[0,t]$, one has
\begin{align*}
\int \mathbb{E}^\mathbb{Q}\left[{g(X^x_s)}\right]\mu(dx)=\mathbb{E}\left[\int g(x)\mu_s(dx)\right].
\end{align*}
This is a natural consequence of almost sure pathwise uniqueness for $(\mu_s)_{s\in [0,t]}$ \cite{convergence-problem}.
Integrating with respect to $\mu$ on both sides of \eqref{equation: U^mu-U^nu under Q} yields
\begin{align*}
\langle U(t,\cdot,\noise,\mu)-U(t,\cdot,\noise,\nu),\mu \rangle&=\mathbb{E}\left[\langle U_0(\cdot,\noise_t,\mu_t)-U_0(\cdot,\noise_t,\nu_t),\mu_t\rangle\right]\\
\nonumber &+\mathbb{E}\left[\int_0^t\int_{\mathbb{T}^d}\Theta (t-s,\cdot,\noise_s,\mu_s,\nu_s)\mu_s(dx)ds\right].
\end{align*}
for \[\Theta(t,x,\theta,\mu,\nu)= \tilde{H}(t,x,\theta,\mu)-\tilde{H}(t,x,\theta,\nu)+D_p\tilde{H}(t,x,\theta,\mu)\cdot(\nabla_x U(t,x,\theta,\mu)-\nabla_x U(t,x,\theta,\nu)).\]
A similar computation on $\nu$, using 
\[Y^\nu_s=-\frac{1}{\sqrt{2\sigma_x}}\int_0^s D_p\tilde{H}(t-u,X_u,\noise_u,\nu_u)dB_u\]
combined with our assumptions on H (namely the monotonicity of $f$ and the strong convexity of $\bar{H}$) and $U_0$ finally gives:
\[\langle U(t,\cdot,\noise,\mu)-U(t,\cdot,\noise,\nu),\mu-\nu \rangle \geq \alpha_H \mathbb{E}\left[\int_0^t\int_{\mathbb{T}^d}\left|\nabla_x U(t-s,x,\noise_s,\mu_s)\!-\!\nabla_x U(t-s,x,\noise_s,\nu_s)\right|^2(\mu_s\!+\!\nu_s)(dx)ds \right]\!.\]
Whenever the idiosyncratic noise is degenerate this result is proven by considering a converging sequence of Lipschitz solution with $\sigma_x=\varepsilon$, as we did in Lemma \ref{Lemma: gradient U=W}. 
\end{proof}
This inequality is the key element that allows us to get Lipschitz estimate in $\mathcal{W}_1$ by the very method presented in \cite{convergence-problem} Theorem 4.3. 
\begin{thm}
\label{existence b(p) W1}
    Assume Hypothesis \ref{hyp: Lipschitz W1}, \ref{Hyp: boundedness + linear growth} and \ref{hyp: separated Hamiltonian +LL monotonicity} hold, and further assume that $b$ is a function of $\noise$ only. Then there exists a unique Lipschitz solution to equation \eqref{eq: ME W2 with b periodic} on $[0,+\infty)$. 
\end{thm}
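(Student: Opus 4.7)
The plan is a standard bootstrap from local to global existence: by the blow-up criterion in Theorem \ref{lip sol b(p)}, it suffices to prove that for every $T>0$ the Lipschitz seminorm $\|W(t,\cdot)\|_{Lip}$ of the maximal Lipschitz solution $W$ remains bounded on $[0,T\wedge T_c)$; uniqueness on $[0,+\infty)$ then follows automatically from Theorem \ref{lip sol b(p)}. The Lipschitz seminorm splits into Lipschitz bounds in $x$, in $\noise$, and in $m$ for the $\mathcal{W}_1$ distance. The $x$-component is given directly by Lemma \ref{Lemma: heat kernel estimate for derivative in x}, which exploits the non-degenerate idiosyncratic noise of Hypothesis \ref{Hyp: boundedness + linear growth}. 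For the $\noise$-component, since $b$ depends only on $\noise$, the SDE for $(\noise_s)$ in the characteristics \eqref{Def: Lipschitz solution b(p)} is autonomous and its flow is Lipschitz in the initial condition uniformly in time; differentiating the Feynman-Kac representation for $W$ with respect to the initial $\noise$, and combining the $x$-Lipschitz bound already obtained with the Lipschitz regularity of $D_xH$ and $\nabla_xU_0$, yields a singular Grönwall inequality analogous to the one used in Lemma \ref{Lemma: heat kernel estimate for derivative in x}, hence a uniform bound on $\|D_\noise W(t,\cdot)\|_\infty$.

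The main obstacle is the $\mathcal{W}_1$-Lipschitz bound in $m$; this is where the flat monotonicity of Hypothesis \ref{hyp: separated Hamiltonian +LL monotonicity} enters, through the propagation of monotonicity established in Lemma \ref{lemma: monotonicity estimate in wasserstein 1 b(p)}. The plan is to adapt the duality argument of Theorem 4.3 of \cite{convergence-problem} to the Lipschitz-solution framework. Concretely, for any Lipschitz test function $\varphi:\tor^d\to\reels$ with $\|\varphi\|_{Lip}\leq 1$ I would consider the perturbed terminal datum $U_0+\varepsilon\Psi_\varphi$ with $\Psi_\varphi(x,\noise,m)=\int \varphi(y)m(dy)$, which still satisfies Hypothesis \ref{hyp: separated Hamiltonian +LL monotonicity}. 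Applying Lemma \ref{lemma: monotonicity estimate in wasserstein 1 b(p)} to the perturbed equation and differentiating at $\varepsilon=0$, combined with the Kantorovich-Rubinstein duality formula
\[\mathcal{W}_1(\mu,\nu)=\sup_{\|\varphi\|_{Lip}\leq 1}\int \varphi\, d(\mu-\nu),\]
should produce a pointwise bound of the form $|U(t,x,\noise,\mu)-U(t,x,\noise,\nu)|\leq C_T\mathcal{W}_1(\mu,\nu)$ on $[0,T\wedge T_c)\times \tor^d\times\reels^n$. Using once more the parabolic regularization stemming from $\sigma_x>0$, along the lines of Lemma \ref{Lemma: heat kernel estimate for derivative in x}, this $C^0$-Lipschitz bound on $U$ in $\mathcal{W}_1$ can then be upgraded to a Lipschitz bound on $\nabla_xU=W$ in $\mathcal{W}_1$, i.e.\ exactly the missing bound.

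Putting the three a priori estimates together contradicts $T_c<+\infty$, so $T_c=+\infty$ and the solution is global. The most delicate step is the one above: one must pass from the time-integrated $L^2$-control provided by Lemma \ref{lemma: monotonicity estimate in wasserstein 1 b(p)} to a pointwise $\mathcal{W}_1$-Lipschitz estimate, and then transfer this regularity from $U$ to $\nabla_xU$ via the heat kernel in $x$, all while keeping the constants independent of $t\in[0,T\wedge T_c)$ and uniform in $(x,\noise)$; the fact that $b$ is autonomous is used crucially to decouple $(\noise_s)$ from the flow of $m_s$ in these arguments, and the separated-Hamiltonian structure of Hypothesis \ref{hyp: separated Hamiltonian +LL monotonicity} ensures that the perturbation $U_0+\varepsilon\Psi_\varphi$ keeps the problem within the flat-monotone class.
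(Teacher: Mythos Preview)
Your overall architecture is right: bootstrap from local to global via the blow-up criterion, split the Lipschitz seminorm into $x$, $\noise$, and $m$ components, get the $x$-bound from Lemma \ref{Lemma: heat kernel estimate for derivative in x}, and observe that the $\noise$-bound follows by Gr\"onwall once the other two are in hand (the $\noise$-flow being autonomous). The paper proceeds the same way.

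The gap is in your $\mathcal{W}_1$-estimate. The perturbation you propose, $U_0\mapsto U_0+\varepsilon\Psi_\varphi$ with $\Psi_\varphi(x,\noise,m)=\int\varphi\,dm$, does not carry information: since $\Psi_\varphi$ is constant in $x$, one has $\nabla_x(U_0+\varepsilon\Psi_\varphi)=\nabla_xU_0$, so the Lipschitz solution $W^\varepsilon\equiv W$ is unchanged, and the perturbed value $U^\varepsilon(t,x,\noise,\mu)=U(t,x,\noise,\mu)+\varepsilon\,\esp{\int\varphi\,dm_t^\mu}$ differs from $U$ by a function independent of $x$. Plugging into the left-hand side of Lemma \ref{lemma: monotonicity estimate in wasserstein 1 b(p)} gives back exactly the same inequality (the extra term is a constant tested against $\mu-\nu$, hence zero), and differentiating in $\varepsilon$ yields nothing. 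Moreover, even if you had a pointwise $\mathcal{W}_1$-Lipschitz bound on $U$, upgrading it to one on $\nabla_xU=W$ via the heat kernel is circular: the characteristics $(m_s^\mu)$ in the representation formula \eqref{equation: representation formula heat kernel W} are driven by $W$, not by $U$, so comparing $m_s^\mu$ and $m_s^\nu$ already requires the $\mathcal{W}_1$-Lipschitz control of $W$ you are trying to prove.

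The paper avoids this by working directly on $W=\nabla_xU$ and closing a self-referential inequality. One first bounds the left-hand side of Lemma \ref{lemma: monotonicity estimate in wasserstein 1 b(p)} by $\|\nabla_xU(t,\cdot,\noise,\mu)-\nabla_xU(t,\cdot,\noise,\nu)\|_\infty\,\mathcal{W}_1(\mu,\nu)$ via Kantorovich--Rubinstein duality; this controls the time-integrated $L^2$ term on the right. Coupling the characteristics optimally and using this control inside a Gr\"onwall argument yields $\esp{\mathcal{W}_1(\mu_s,\nu_s)}\leq C\big(\mathcal{W}_1(\mu,\nu)+\sqrt{\|\nabla_xU(t,\cdot,\noise,\mu)-\nabla_xU(t,\cdot,\noise,\nu)\|_\infty\,\mathcal{W}_1(\mu,\nu)}\big)$. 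Feeding this back into the representation formula \eqref{equation: representation formula heat kernel W} and applying the singular Gr\"onwall Lemma \ref{Lemma: backward gronwall} gives a quadratic inequality of the form $X\leq C\big(\mathcal{W}_1(\mu,\nu)+\sqrt{X\,\mathcal{W}_1(\mu,\nu)}\big)$ for $X=\|\nabla_xU(t,\cdot,\noise,\mu)-\nabla_xU(t,\cdot,\noise,\nu)\|_\infty$, which solves to $X\leq C\,\mathcal{W}_1(\mu,\nu)$. This is the missing step you need to replace your perturbation argument with.
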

\begin{proof}
    Local existence follows from Hypothesis \ref{hyp: Lipschitz W1}. Lemma \ref{Lemma: heat kernel estimate for derivative in x} gives us an estimate on the Lipschitz norm of $W$ in $x$. Because the parameter $\noise$ is associated to an autonomous drift, an estimate on the Lipschitz norm of $W$ in $\noise$ is obtained by mean of Grönwall's Lemma, so long as $W$ is Lipschitz in the other variables. It only remains to show that the inequality from Lemma \ref{lemma: monotonicity estimate in wasserstein 1 b(p)} is sufficient to bound the Lipschitz norm of $W$ in $\mathcal{W}_1$.
    
    Let us first remark that thanks to the dual representation of the Wasserstein 1 distance:
    \[\langle U(t,\cdot,\noise,\mu)-U(t,\cdot,\noise,\nu),\mu-\nu\rangle\leq \|\nabla_x U(t,\cdot,\noise,\mu)-\nabla_x U(t,\cdot,\noise,\nu)\|_\infty \mathcal{W}_1(\mu,\nu).\]
Fix $t<T_c$ and consider the solution to 
\[
\left\{
\begin{array}{l}
     dX^\mu_s= F(X^\mu_s,\noise_s,\mu_s,\nabla_x U(t-s,X^\mu_s,\noise_s,\mu_s))+\sqrt{2\sigma_x}dB_s \quad X^\mu_s|_{s=0}=X_0\sim \mu,\\
     \mu_s=\mathcal{L}(X^\mu_s|(\noise_u)_{u\leq s}),\\
     \displaystyle d\noise_s=b(\noise_s)ds+\sqrt{2\sigma_\noise}dB^\noise_s \quad \noise_0=\noise.
\end{array}
\right.
\]
Similarly we define $(X_s^\nu)_{s\in[0,t]}$ with an initial condition 
\[X^\nu_s|_{s=0}=Y_0\sim \nu,\] 
$X_0,Y_0$ being chosen such that the optimal coupling between $\mu$ and $\nu$ is realized in $\mathcal{W}_1$. Letting $s\leq t$, we get
\begin{align*}\esp{\left|X^\mu_s-X^\nu_s\right|}&\leq \mathcal{W}_1(\mu,\nu)+C\int_0^s\esp{\left|X^\mu_u-X^\nu_u\right|}du\\
&+C\esp{\int_0^t\int_{\mathbb{T}^d}\left|\nabla_xU(t-u,x,\noise_u,\mu_u)-\nabla_x U(t-u,x,\noise_u,\nu_u)\right|\mu_u(dx)du},
\end{align*}
where the constant C depends on $H,t,\|\nabla_x U\|_{Lip(x)}$ only. By Lemma \ref{lemma: monotonicity estimate in wasserstein 1 b(p)} and an application of Grönwall's Lemma we conclude to 
\[\forall s\leq t \quad \esp{\mathcal{W}_1(\mu_s,\nu_s)}\leq C\left(\mathcal{W}_1(\mu,\nu)+\sqrt{\|\nabla_x U(t,\cdot,\noise,\mu)-\nabla_x U(t,\cdot,\noise,\nu)\|_\infty \mathcal{W}_1(\mu,\nu)}\right)e^{Cs}.\]
Thanks to this inequality, we may now estimate $\|\nabla_xU(t,\cdot,\mu)-\nabla_xU(t,\cdot,\nu)\|_\infty$. Using the representation formula \eqref{equation: representation formula heat kernel W} we get the following inequality:

\begin{align*}|\nabla_x U(t,x,\noise,\mu)-\nabla_xU (t,x,\noise,\nu)|\leq& C\esp{\mathcal{W}_1(\mu_t,\nu_t)+\int_0^t \mathcal{W}_1(\mu_s,\nu_s)}\\
&+C\int_0^t \frac{1}{\sqrt{s}}\esp{\|\nabla_xU(t-s,\cdot,\mu_s)-\nabla_xU(t-s,\cdot,\nu_s)\|_\infty}ds,
\end{align*}
for a constant C depending on $\|\nabla_x U_0\|_{Lip}, f,\tilde{H},U_0$. Applying Lemma \ref{Lemma: backward gronwall}   to \[s\to \esp{\|\nabla_x U(s,\cdot,\mu_{t-s})-\nabla_x U(s,\cdot,\nu_{t-s})\|_\infty}\]
yields 
\[\|\nabla_x U(t,\cdot,\mu)-\nabla_x U(t,\cdot,\nu)\|_\infty\leq C\left(\mathcal{W}_1(\mu,\nu)+\sqrt{\|\nabla_x U(t,\cdot,\mu)-\nabla_x U(t,\cdot,\nu)\|_\infty\mathcal{W}_1(\mu,\nu)}\right),\]
for a constant C depending both on $t$ and the data of the problem. This yields the required Lipschitz estimate on $\nabla_x U=W$ in $\mathcal{W}_1$.

Because, we have estimates on the Lipschitz semi-norm of $W$ in all variables for any time $t<\infty$ we may then conclude to the existence of the Lipschitz solution on any time interval. Indeed, if $T_c<\infty$ held, then necessarily
\[\underset{t\to T_c}{\lim}\|W\|_{Lip}=+\infty,\]
which is absurd in light of the above proof. 
\end{proof}
\subsubsection{Toward more general noise models}
\quad

We now turn to more general choices of a function $b$, namely allowing it to depend on the measure argument. For such models estimates presented in the above section fail in general \cite{noise-add-variable}. Let us explain the strategy of proof in this more convoluted case. The key point is that even when perturbed by this noise process, monotonicity should still have a regularizing effect on the equation. The goal is still to get estimates on the equation by looking at the quantity
\[(t,\mu,\nu)\mapsto\langle U(t,\cdot,\theta,\mu)-U(t,\cdot,\theta,\nu),\mu-\nu\rangle.\]
However since the evolution of $\noise$ now depends on the measure argument, we cannot treat it as an independent perturbation as in the previous section. In fact since the evolution of the noise process and the distribution of players now depend on each other, it appears necessary to rather consider 
\[(t,\mu,\nu,\noise,\noiseb)\to \langle U(t,\cdot,\theta,\mu)-U(t,\cdot,\noiseb,\nu),\mu-\nu\rangle.\]
Obviously we don't expect this quantity to stay positive for $\theta\neq \noiseb$. A naive way to solve this problem is to add a quadratic form in $\theta$ acting as a way to recover non-negativity (or as a penalization in some sense) by considering instead
\[(t,\mu,\nu,\noise,\noiseb)\to \frac{1}{2}(\noise-\noiseb)\cdot A(\noise-\noiseb)+ \langle U(t,\cdot,\theta,\mu)-U(t,\cdot,\noiseb,\nu),\mu-\nu\rangle,\]
for some matrix $A\geq 0$. We explain in a later section how this idea can be generalized to more general penalization. Here we focus on a simple case in which we assume that $\bar{H}$ does not depend on $\noise$. 
\begin{hyp}
\label{hyp: separated Hamiltonian +LL monotonicity b(p,m)}
    \[H(x,\noise,p,m)=\bar{H}(x,p)-f(x,\noise,m),\]
for a function $\bar{H}$ $\alpha_H-$convex in $p$. Furthermore, there exists a symmetric matrix $A\in M_m(\reels)$ such that 
\begin{gather}
\nonumber\forall (\noise,\noiseb)\in\reels^{2m},  (\mu,\nu)\in\left(\mathcal{P}(\mathbb{T}^d)\right)^2 \quad\\
\label{LL monotinicity + in p U0}
  \frac{1}{2}(\noise-\noiseb)\cdot A(\noise-\noiseb)+\langle U_0(\cdot,\noise,\mu)-U_0(\cdot,\noiseb,\nu),\mu-\nu\rangle \geq 0,\\
\label{joint monotonicity (f,b,H)}
\langle f(\cdot,\noise,\mu)-f(\cdot,\noiseb,\nu),\mu-\nu\rangle+(b(\noise,\mu)-b(\noiseb,\nu))\cdot A(\noise-\noiseb)\geq 0.
\end{gather}
\end{hyp}
\begin{remarque}
    If $U_0$ is strongly monotone, it is always possible to find a matrix $A=cI_n$ such that \eqref{LL monotinicity + in p U0} holds. The second part \eqref{joint monotonicity (f,b,H)} is a joint monotonicity assumptions on $(f,Ab)$ in $\mathcal{P}(\tor^d)\times \reels^n$. It is a much stronger assumption, which in most cases we expect is not going to be fulfilled unless either $f$ or $b$ is strongly monotone.
\end{remarque}
\begin{exemple}
    Consider this simple model in dimension 1: Letting $\theta$ represent the selling price of a product and $x$ the production of a firm. We fix
    \[b(\theta,\mu)=r\theta-\alpha \int y\mu(dy).\]
    The term $r\theta$ models inflation while the price of the good decrease in function of the average production of each company. For one single firm we set 
    \[f(x,\theta,\mu)=x\theta-c(x).\]
    Thus the reward of a firm depends only on its choice of production $x$ and the price of goods $\theta$, the cost of production for a given quantity $x$ being $c(x)$. Then for $A=\frac{1}{\alpha}$, \eqref{joint monotonicity (f,b,H)} is verified for the couple $(f,Ab)$.
\end{exemple}
We can now show an equivalent to Lemma \ref{lemma: monotonicity estimate in wasserstein 1 b(p)} in this new setting. 
\begin{lemma}
\label{lemma: monotonicity estimate in wasserstein 1 b(p,m)}
Under Hypotheses \ref{hyp: Lipschitz W1} and \ref{hyp: separated Hamiltonian +LL monotonicity b(p,m)}, if $U$ is a Lipschitz solution of \eqref{eq: ME W2 with b periodic} on $[0,T_c)$ as defined in \eqref{definition: U for Lipschitz solutions}, then 
\begin{gather}
\nonumber \forall t<T_c \quad (\noise,\noiseb,\mu,\nu)\in\left(\reels^n\right)^2\times \left(\mathcal{P}(\tor^{d})\right)^2,\\
\nonumber  \frac{1}{2}(\noise-\noiseb)\cdot A(\noise-\noiseb)+\langle U(t,\cdot,\noise,\mu)-U(t,\cdot,\noiseb,\nu),\mu-\nu \rangle \\
    \label{monotonicity estimate W1 gradient}
    \geq \alpha_H \mathbb{E}\left[\int_0^t\int_{\mathbb{T}^d}\left|\nabla_x U(t-s,x,\noise_s,\mu_s)-\nabla_x U(t-s,x,\noiseb_s,\nu_s)\right|^2(\mu_s+\nu_s)(dx)ds \right].
\end{gather}
with 
\[
\left\{\begin{array}{ll}
d\noise_s=b(\noise_s,\mu_s)ds+\sqrt{2\sigma_\noise}dB^\noise_s& \noise_{s=0}=\noise,\\
d\noiseb_s=b(\noiseb_s,\nu_s)ds+\sqrt{2\sigma_\noise}dB^\noise_s& \noise_{s=0}=\noiseb,\\
d\mu_s=\left(-\text{div}\left(F(x,\noise_s,\mu_s,W(t-s,x,\noise_s,\mu_s))\right)+\sigma_x \Delta_x \mu_s\right)ds & \mu_{s=0}=\mu,\\
d\nu_s=\left(-\text{div}\left(F(x,\noiseb_s,\nu_s,W(t-s,x,\noiseb_s,\nu_s))\right)+\sigma_x \Delta_x \nu_s\right)ds & \nu_{s=0}=\nu,\\
\end{array}
\right.
\]
\end{lemma}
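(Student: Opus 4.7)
The approach will closely parallel the proof of Lemma \ref{lemma: monotonicity estimate in wasserstein 1 b(p)}. I would combine the Feynman--Kac representation \eqref{definition: U for Lipschitz solutions} with two Girsanov changes of measure to reduce $\langle U(t,\cdot,\noise,\mu)-U(t,\cdot,\noiseb,\nu),\mu-\nu\rangle$ to a sum of terminal and running terms, and then exploit monotonicity and convexity. The new ingredient needed to handle the measure-dependence of $b$ is to track the quadratic penalty $\tfrac{1}{2}(\noise-\noiseb)\cdot A(\noise-\noiseb)$ along the coupled flow; this is precisely what lets the joint monotonicity \eqref{joint monotonicity (f,b,H)} enter the argument.

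First I would realize both $U(t,\cdot,\noise,\mu)$ and $U(t,\cdot,\noiseb,\nu)$ on a single probability space, coupling the two noise processes $(\noise_s)$ and $(\noiseb_s)$ via a common Brownian motion $(B^\noise_s)$ so that their difference has no martingale part. Repeating the Girsanov argument of Lemma \ref{lemma: monotonicity estimate in wasserstein 1 b(p)} --- applying the change of probability $\mathcal{E}(Y^\mu)$ when integrating $U(t,\cdot,\noise,\mu)-U(t,\cdot,\noiseb,\nu)$ against $\mu$ and $\mathcal{E}(Y^\nu)$ when integrating against $\nu$, noting that these changes act only on $(B_s)$ and so leave $(\noise_s,\noiseb_s,\mu_s,\nu_s)$ unchanged --- and plugging in the decomposition $H=\bar H(x,p)-f(x,\noise,m)$ together with the $\alpha_H$-convexity of $\bar H$, I expect to arrive at
\begin{align*}
\langle U(t,\cdot,\noise,\mu)-U(t,\cdot,\noiseb,\nu),\mu-\nu\rangle
&\geq \mathbb{E}\bigl[\langle U_0(\cdot,\noise_t,\mu_t)-U_0(\cdot,\noiseb_t,\nu_t),\mu_t-\nu_t\rangle\bigr]\\
&\quad + \mathbb{E}\Bigl[\int_0^t \langle f(\cdot,\noise_s,\mu_s)-f(\cdot,\noiseb_s,\nu_s),\mu_s-\nu_s\rangle\,ds\Bigr]\\
&\quad + \alpha_H\,\mathbb{E}\Bigl[\int_0^t\!\int_{\tor^d}\bigl|\nabla_x U(t-s,x,\noise_s,\mu_s)-\nabla_x U(t-s,x,\noiseb_s,\nu_s)\bigr|^2 (\mu_s+\nu_s)(dx)\,ds\Bigr].
\end{align*}

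Next, since $(\noise_s-\noiseb_s)$ has no martingale part, the chain rule yields
\[\tfrac{1}{2}(\noise-\noiseb)\cdot A(\noise-\noiseb) = \tfrac{1}{2}(\noise_t-\noiseb_t)\cdot A(\noise_t-\noiseb_t) + \int_0^t (\noise_s-\noiseb_s)\cdot A\bigl(b(\noise_s,\mu_s)-b(\noiseb_s,\nu_s)\bigr)\,ds,\]
with the appropriate sign determined by the drift convention in the SDE for $(\noise_s,\noiseb_s)$. Adding this identity to the previous display, the terminal contribution becomes $\mathbb{E}\bigl[\tfrac{1}{2}(\noise_t-\noiseb_t)\cdot A(\noise_t-\noiseb_t)+\langle U_0(\cdot,\noise_t,\mu_t)-U_0(\cdot,\noiseb_t,\nu_t),\mu_t-\nu_t\rangle\bigr]$, which is non-negative by \eqref{LL monotinicity + in p U0}, while the new running integrand matches exactly the quantity in \eqref{joint monotonicity (f,b,H)} and is therefore non-negative as well. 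Only the $\alpha_H$ quadratic term survives, giving \eqref{monotonicity estimate W1 gradient}. The degenerate case $\sigma_x=0$ is then handled by the same $\sigma_x^\varepsilon=\varepsilon\downarrow 0$ approximation used at the end of the proof of Lemma \ref{lemma: monotonicity estimate in wasserstein 1 b(p)}.

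The principal difficulty is organizational rather than analytical: one must handle two Girsanov changes of measure carrying distinct drifts simultaneously and ensure that the sign of the chain-rule identity for the quadratic penalty is consistent with the sign convention in \eqref{joint monotonicity (f,b,H)}. Once this bookkeeping is under control, the proof reduces to combining the three non-negativity statements coming from strong convexity of $\bar H$, the monotonicity of $(U_0,A)$ at the terminal time, and the joint monotonicity of $(f,Ab)$ encoded in Hypothesis \ref{hyp: separated Hamiltonian +LL monotonicity b(p,m)}.
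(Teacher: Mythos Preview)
Your proposal is correct and follows essentially the same approach as the paper: coupling $(\noise_s)$ and $(\noiseb_s)$ through a common $B^\noise$, applying the two Girsanov changes from Lemma~\ref{lemma: monotonicity estimate in wasserstein 1 b(p)} (which leave the $(\noise_s,\noiseb_s,\mu_s,\nu_s)$ dynamics untouched), and then adding the chain-rule identity for the quadratic penalty so that \eqref{LL monotinicity + in p U0} and \eqref{joint monotonicity (f,b,H)} absorb the terminal and running terms respectively. Your write-up is in fact more explicit than the paper's, which merely sketches these same steps.
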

\begin{proof}
As in Lemma \ref{lemma: monotonicity estimate in wasserstein 1 b(p)}, it is sufficient to carry out the proof for $\sigma_x>0$. Fix $t<T_c$ and consider 
    \[
    \left\{
    \begin{array}{cc}
         d\noise_s=b(\noise_s,\mu_s)ds+\sigma_\noise dB^\noise_s &\noise_0=\noise  \\
         d\mu_s=(-\text{div}(F(x,\noise_s,\mu_s,\nabla_x U(t-s,\cdot,\noise_s,\mu_s))+\sigma_x \Delta_x \mu_s)ds& \mu_0=\mu\\
    \end{array}
    \right.
    \]
    and let $(\noiseb_s,\nu_s)_{s\in[0,t]}$ be defined for the same Brownian motion $(B^\theta_s)_{s\geq 0}$ but with initial condition $(\noiseb,\nu)$. Observe the dynamic of those 4 processes stays the same under the change of probability associated to the martingale exponential of the stochastic process
    \[Y^{\theta,\mu}_s= -\frac{1}{\sqrt{2\sigma_x}}\int_0^s D_p \tilde{H}(t-u,X_u,\noise_u,\mu_u)dB_u,\]
    whenever $(B_s)_{s\geq 0}$ is a Brownian motion independent of $(B^\theta_s)_{s\geq 0}$. The change of probability is then carried out as in Lemma \ref{lemma: monotonicity estimate in wasserstein 1 b(p)}, the difference being the change of probability introduced now depends on the couples $(\noise,\mu), (\noiseb,\nu)$. Using this change of probability on the representation formula \eqref{definition: U for Lipschitz solutions} and remarking $(\noise,\noiseb)$ satisfy
    \[\frac{1}{2}(\noise-\noiseb)\cdot A(\noise-\noiseb)=\frac{1}{2}(\noise_t-\noiseb_t)\cdot A(\noise_t-\noiseb_t)+\int_0^t (b(\noise_s,\mu_s)-b(\noiseb_s,\nu_s))\cdot A(\noise_s-\noiseb_s)ds,\]
    yields using Hypothesis \ref{hyp: separated Hamiltonian +LL monotonicity b(p,m)}
    \begin{gather*}\nonumber \frac{1}{2}(\noise-\noiseb)\cdot A(\noise-\noiseb)+\langle U(t,\cdot,\noise,\mu)-U(t,\cdot,\noiseb,\nu),\mu-\nu \rangle \\
    \geq \alpha_H\mathbb{E}\left[\int_0^t\int_{\mathbb{T}^d}\left|\nabla_x U(t-s,x,\noise_s,\mu_s)-\nabla_x U(t-s,x,\noiseb_s,\nu_s)\right|^2(\mu_s+\nu_s)(dx)ds \right].
    \end{gather*}
\end{proof}
A first remark is that for $\noise=\noiseb$ we recover the same inequality we had in Lemma \ref{lemma: monotonicity estimate in wasserstein 1 b(p)}. However, this inequality is much stronger as it will also help us control the Lipschitz norm of $\nabla_x U$ with respect to $\noise$.

\begin{thm}
\label{thm: existence b(p,m) W1}
    Assume Hypotheses \ref{hyp: Lipschitz W1}, \ref{Hyp: boundedness + linear growth}, and \ref{hyp: separated Hamiltonian +LL monotonicity b(p,m)} hold. Then there exists a unique Lipschitz solution to equation \eqref{eq: ME W2 with b periodic} on $[0,+\infty)$ 
\end{thm}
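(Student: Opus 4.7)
The plan is to follow closely the strategy of Theorem \ref{existence b(p) W1}, replacing the independent control of $\noise$ with a joint estimate on $(\noise,\mu)$ coming from Lemma \ref{lemma: monotonicity estimate in wasserstein 1 b(p,m)}. Local existence on some maximal interval $[0,T_c)$ is granted by Theorem \ref{lip sol b(p)} under Hypothesis \ref{hyp: Lipschitz W1}, and by its blow-up criterion it is enough to control $\|W(t,\cdot)\|_{Lip}$ on any bounded time interval. The Lipschitz norm of $W$ in $x$ is already bounded by Lemma \ref{Lemma: heat kernel estimate for derivative in x}, so the task reduces to showing that $W$ stays Lipschitz jointly in $(\noise,\mu)$, i.e.~that $\Lambda_t:=\|\nabla_x U(t,\cdot,\noise,\mu)-\nabla_x U(t,\cdot,\noiseb,\nu)\|_\infty$ is controlled by $|\noise-\noiseb|+\mathcal{W}_1(\mu,\nu)$ up to any finite time.

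Fix $T<T_c$, $t\le T$ and two pairs $(\noise,\mu),(\noiseb,\nu)$, and build the coupled flows $(\noise_s,\mu_s,\noiseb_s,\nu_s)_{s\in[0,t]}$ of Lemma \ref{lemma: monotonicity estimate in wasserstein 1 b(p,m)}, driven by common Brownian motions. Kantorovich duality gives $\langle U(t,\cdot,\noise,\mu)-U(t,\cdot,\noiseb,\nu),\mu-\nu\rangle\le\Lambda_t\mathcal{W}_1(\mu,\nu)$, so combining with Lemma \ref{lemma: monotonicity estimate in wasserstein 1 b(p,m)} yields
$$\alpha_H\,\mathbb{E}\!\left[\int_0^t\!\int_{\tor^d}\!|\nabla_x U(t-s,x,\noise_s,\mu_s)-\nabla_x U(t-s,x,\noiseb_s,\nu_s)|^2(\mu_s+\nu_s)(dx)ds\right]\le\tfrac12(\noise-\noiseb)\cdot A(\noise-\noiseb)+\Lambda_t\mathcal{W}_1(\mu,\nu).$$
The matrix $A$ from Hypothesis \ref{hyp: separated Hamiltonian +LL monotonicity b(p,m)} is precisely what turns the sign-indefinite noise discrepancy into a usable quadratic penalty.

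Couple now $(X^\mu_s,X^\nu_s)$ as trajectories realizing the optimal $\mathcal{W}_1$-coupling of $(\mu,\nu)$, driven by the same Brownian motion. Grönwall's lemma applied simultaneously to $\mathbb{E}|X^\mu_s-X^\nu_s|$ and $\mathbb{E}|\noise_s-\noiseb_s|$, using the Lipschitz regularity of $b$ and $F=-D_pH$ from Hypothesis \ref{hyp: Lipschitz W1} and a Cauchy-Schwarz application of the previous $L^2$ bound to handle $\int_0^s\mathbb{E}|\nabla_x U^\mu_u-\nabla_x U^\nu_u|du$, leads to
$$\sup_{s\le t}\mathbb{E}\mathcal{W}_1(\mu_s,\nu_s)+\sup_{s\le t}\mathbb{E}|\noise_s-\noiseb_s|\le C\!\left(|\noise-\noiseb|+\mathcal{W}_1(\mu,\nu)+\sqrt{|\noise-\noiseb|^2+\Lambda_t\mathcal{W}_1(\mu,\nu)}\right).$$
Inserting this into the heat-kernel representation \eqref{equation: representation formula heat kernel W} of $\nabla_x U(t,\cdot,\noise,\mu)-\nabla_x U(t,\cdot,\noiseb,\nu)$ and applying the singular Grönwall Lemma \ref{Lemma: backward gronwall} exactly as in Theorem \ref{existence b(p) W1} yields $\Lambda_t\le C(|\noise-\noiseb|+\mathcal{W}_1(\mu,\nu)+\sqrt{\Lambda_t(|\noise-\noiseb|+\mathcal{W}_1(\mu,\nu))})$, which solved as a quadratic in $\sqrt{\Lambda_t}$ produces the joint Lipschitz estimate $\Lambda_t\le C(T)(|\noise-\noiseb|+\mathcal{W}_1(\mu,\nu))$. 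Together with Lemma \ref{Lemma: heat kernel estimate for derivative in x} this prevents $\|W(t,\cdot)\|_{Lip}$ from blowing up, hence $T_c=+\infty$. The main obstacle is the two-way coupling between $\noise$ and $m$: neither variable can be estimated in isolation, and the proof only closes because the joint monotonicity \eqref{joint monotonicity (f,b,H)} produces exactly the right $A$-penalty in Lemma \ref{lemma: monotonicity estimate in wasserstein 1 b(p,m)} to absorb the cross terms arising from the perturbation of the drift of $\noise$ by $m$.
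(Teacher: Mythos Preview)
Your proposal is correct and follows essentially the same route as the paper: use Lemma \ref{lemma: monotonicity estimate in wasserstein 1 b(p,m)} (bounded above via Kantorovich duality) to control the $L^2$-in-time gradient discrepancy along the coupled characteristics, apply Gr\"onwall jointly to $\mathbb{E}|X^\mu_s-X^\nu_s|+\mathbb{E}|\noise_s-\noiseb_s|$, feed the resulting stability estimate into the heat-kernel representation \eqref{equation: representation formula heat kernel W}, and close with the singular Gr\"onwall Lemma \ref{Lemma: backward gronwall} and a quadratic-in-$\sqrt{\Lambda_t}$ argument. The only cosmetic difference is that you carry the term $|\noise-\noiseb|^2$ inside the square root before absorbing it, whereas the paper drops it directly into the linear part; since $\sqrt{a+b}\le\sqrt a+\sqrt b$ this is immaterial.
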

\begin{proof}
    First, we prove a stability estimate with respect to initial conditions on the system
\[
\left\{
\begin{array}{l}
     \displaystyle X^{X_0,\noise}_s=X_0-\int_0^s D_p H(X^{X_0,\noise}_u,\noise^{X_0,\noise}_u,\mu^{X_0,\noise}_u,\nabla_x U(t-u,X^{X_0,\noise}_u,\noise^{X_0,\noise}_u,\mu^{X_0,\noise}_u))+\sqrt{2\sigma_x}dB_s,  \\
     \displaystyle \noise_s^{X_0,\noise}=\noise-\int_0^s b(\noise_u^{X_0,\noise},\mu^{X_0,\noise}_u)du+\sqrt{2\sigma_\noise}B^\noise_s,\\
     \mu^{X_0,\theta}_u=\mathcal{L}(X_u|(\noise_l)_{l\leq u}).
\end{array}
\right.
\]
 The sketch of proof is similar to Theorem \ref{existence b(p) W1} but we now estimate
\[h(s):s\mapsto\esp{|X_s^{X_0,\noise}-X_s^{Y_0,\noiseb}|+|\noise_s^{X_0,\noiseb}-\noise_s^{Y_0,\noiseb}|},\]
instead of fixing $\theta$. Let $X_0$ and $Y_0$ respectively be distributed along $\mu$ and $\nu$ and such that the optimal coupling in $\mathcal{W}_1$ is realized. 
Hypothesis \ref{hyp: Lipschitz W1} implies that
\[\forall s\leq t \quad h(s)\leq h(0)+C\int_0^s\left(h(u)+\esp{\int_{\tor^d}|\nabla_x U(t-u,x,\noise^{X_0,\noise}_s)-\nabla_x U(t-u,x,\noise^{Y_0,\noiseb}_s)|\mu_u^{X_0,\theta}(dx)}\right),\]
for a constant $C$ depending on $H,b$ and $\|\nabla_x U\|_{Lip(x)}$ which we know is bounded thanks to Lemma \ref{Lemma: heat kernel estimate for derivative in x}. We now apply Grönwall's Lemma to $s\mapsto h(s)$ and then use Lemma \ref{lemma: monotonicity estimate in wasserstein 1 b(p,m)} to deduce that
\begin{gather*}
    \forall s\leq t \quad \esp{\mathcal{W}_1(\mu^{X_0,\theta}_s,\nu^{Y_0,\noiseb}_s)+|\noise^{X_0,\theta}_s-\noise^{Y_0,\noiseb}|}\\
    \leq C(\mathcal{W}_1(\mu,\nu)+|\noise-\noiseb|+\sqrt{\|\nabla_x U(t,\cdot,\noise,\mu)-\nabla_x U(t,\cdot,\noiseb,\nu)\|_{\infty}\mathcal{W}_1(\mu,\nu)},
\end{gather*}
for a constant C depending on $H,b,\alpha_H$ and $\|\nabla_x U\|_{Lip(x)}$. We finally come back to the representation formula \eqref{equation: representation formula heat kernel W}, which yields by combining the above inequality with Lemma \ref{Lemma: backward gronwall}
\[\exists C>0 \quad \|\nabla_x U(t,\cdot,\noise,\mu)-\nabla_x U(t,\cdot,\noiseb,\nu)\|_{\infty}\leq C(\mathcal{W}_1(\mu,\nu)+|\noise-\noiseb|).\]
Having estimates on the Lipschitz norm of $\nabla_xU(t,\cdot)$ in all variables for any $t\geq 0$, we conclude to the existence of a Lipschitz solution on $[0,+\infty)$.
\end{proof}
\begin{remarque}
Some extensions to this result are possible. In particular, it is possible for $b$ to depend on $\nabla_x U$ provided this dependency is such that
\begin{gather}
\label{condition on b(U) W1}
|b(\noise,\mu,\!\nabla_x U(t,\cdot,\theta,\mu))-b(\noiseb,\nu,\!\nabla_x U(t,\cdot,\noiseb,\nu))|\\
\nonumber \leq C(|\noise-\noiseb|+\mathcal{W}_1(\mu,\nu)+\langle |\nabla_x U(t,\cdot,\theta,\mu)-\nabla_x U(t,\cdot,\noiseb,\nu)|,\mu+\nu\rangle).
\end{gather}
So that the nonlinear term in $\nabla_x U$ coming from $b$ can be controlled by the convexity of $H$. It is also possible to take a function $\bar{H}$ depending on $\noise$ provided this dependency can be absorbed by the monotonicity of $b$. Those extensions are straightforward by adapting slightly the above proof and changing appropriately equation \ref{joint monotonicity (f,b,H)} in Hypothesis \ref{hyp: separated Hamiltonian +LL monotonicity b(p,m)} which then becomes an interplay between $\bar{H},f$ and $b$.
\end{remarque}
\section{Wellposedness under \texorpdfstring{$L^2-$}{TEXT}monotonicity}
\label{section L2 monotone solutions}
In this section, we are interested in the existence of solutions to the master equation \eqref{eq: general ME} in $\reels^d\times\reels^n\times \mathcal{P}_2(\reels^d)$. In the previous section, we worked directly at the level of the value function $U$ to show the wellposedness of solutions. Following Section \ref{section Lipschitz solutions}, it is sufficient to show the Lipschitz solution $W$ of \eqref{eq: linear lip sol} do not blow up on the interval $[0,T]$. The approach we take in this section is to present wellposedness results at the level of $W$ which is more natural in an $L^2$ monotone setting. Indeed, we consider instead this non-linear system of transport equations
\begin{equation}
\label{eq: ME W2 Lip}
\left\{
\begin{array}{c}
\displaystyle\partial_t W+F(x,\noise,m,W)\cdot \nabla_x W+b[W](t,\noise,m)\cdot\nabla_\noise W-\sigma_x \Delta_x W-\sigma_\noise \Delta_\noise W\\
\displaystyle +\int_{\reels^d} F(y,\noise,m,W)\cdot D_m W(t,x,\noise,m)(y)m(dy)-\sigma_x\int_{\reels^d} \text{div}_y (D_mW(t,x,\noise,m)(y))m(dy)\\
\displaystyle =G(x,\noise,m,W) \text{ in } (0,T)\times\reels^d\times\reels^n\times\mathcal{P}_2(\reels^d),\\
\displaystyle W(0,x,\noise,m)=W_0(x,\noise,m) \text{ for } (x,\noise,m)\in\reels^d\times\reels^n\times\mathcal{P}_2(\reels^d).
 \end{array}
 \right.
\end{equation}
We insist on the fact that $W$ is a vector function taking values in $\reels^d$ and \eqref{eq: ME W2 Lip} is indeed a system of non linear PDEs.  Recall that for $(F,G,W_0)=(\nabla_pH,-\nabla_xH,\nabla_x U_0)$, \eqref{eq: ME W2 Lip} is obtained from \eqref{eq: general ME} by taking the gradient in $x$ of the equation, in which case $W=\nabla_x U$ for U the solution of the master equation. Let us first remark that the adaptation of the notion of Lipschitz solution to \eqref{eq: ME W2 Lip} is not a problem, a solution being defined as a fixed point of the associated linear transport problem.

We consider general data $(F,G,W_0)$ instead of taking them as gradient (which correspond to the situation of a MFG) for the following reason: Equation \eqref{eq: ME W2 Lip} also corresponds to the equation satisfied by the decoupling field of a general mean field forward backward stochastic differential equation (FBSDE) \cite{mean-field_G-monotonicity,mean-field-FBSDE}. As a consequence, the results we give in this section are also valid for mean field forward-backward stochastic differential equations and some extended MFGs \cite{extended-mfg}. As in the previous section, we start with the case of an autonomous noise process.

\subsection{Autonomous noise process}
\label{subsection: autonomous W2}
\subsubsection{Propagation of monotonicity}
We consider the following equation 
\begin{equation}
\label{eq: ME W2 Lip b(p)}
\left\{
\begin{array}{c}
\displaystyle\partial_t W+F(x,\noise,m,W)\cdot \nabla_x W+b(\noise)\cdot\nabla_\noise W-\sigma_x \Delta_x W-\sigma_\noise \Delta_\noise W\\
\displaystyle +\int_{\reels^d} F(y,\noise,m,W)\cdot D_m W(t,x,\noise,m)(y)m(dy)-\sigma_x\int_{\reels^d} \text{div}_y (D_mW(t,x,\noise,m)(y))m(dy)\\
\displaystyle =G(x,\noise,m,W) \text{ in } (0,T)\times\reels^d\times\reels^n\times\mathcal{P}_2(\reels^d),\\
\displaystyle W(0,x,\noise,m)=W_0(x,\noise,m) \text{ for } (x,\noise,m)\in\reels^d\times\reels^n\times\mathcal{P}_2(\reels^d).
 \end{array}
 \right.
\end{equation}
Even though the problem becomes simpler whenever the noise process $(\theta_t)_{t\geq 0}$ is autonomous, \eqref{eq: ME W2 Lip b(p)} is still a system of nonlinear transport equation on the space of measure. It is classic that such a system is usually well-posed only up until a finite blow-up time without some assumptions on the coefficients. The following assumption
\begin{hyp}
\label{hyp: lip W2 b(p)}
   The drift $b$ is Lipschitz and $F,G,W_0$ are Lipschitz on $\reels^d\times \reels^n\times \mathcal{P}_2(\reels^d)$ for the $\mathcal{W}_2$ distance. 
\end{hyp}
is in force throughout this section. Short time existence of Lipschitz solutions to \eqref{eq: ME W2 Lip b(p)} follows naturally from Theorem \ref{lip sol b(p)}.
In his course at Collège de France \cite{Lions-college}, P.-L. Lions, showed through the Hilbertian approach that in the absence of noise (i.e. without $\theta$ and for $\sigma_x=0$), $L^2-$monotonicity of the data has a regularizing effect on the equation \eqref{eq: ME W2 Lip b(p)}. In this section we are going to show that such considerations still hold in this setting. Following the works \cite{noise-add-variable,Lions-college} we expect that a good starting point to obtain some regularity on the solution of \eqref{eq: ME W2 Lip b(p)} is to take a look at the auxiliary function
\begin{equation}
\label{def: Z b(p)}
Z(t,\gamma,\noise)=\int_{\reels^{2d}}( W(t,x,\noise,\mu)-W(t,y,\noise,\nu))\cdot(x-y) \gamma(dx,dy)\\
 \end{equation}
 for any $\gamma\in \Gamma(\mu,\nu),(\mu,\nu)\in\left(\mathcal{P}(\reels^d)\right)^2$. One may remark that if we lift $W$ on the space $L^2(\Omega,\reels^d)$, then
 \[Z(t,\mathcal{L}\left(X,Y\right),\noise)=\langle W(t,X,\noise)-W(t,Y,\noise),X-Y\rangle_{L^2},\]
 for any $(X,Y)\in \left(L^2(\Omega,\reels^d)\right)^2$. As such, the non-negativity of $Z$ is equivalent to the $L^2-$monotonicity of $W$. The key difference with the Hilbertian approach is that we place ourselves directly at the level of probability measures, which allows us to work with a more subtle notion of derivative. Let us first prove a uniqueness result for equation \eqref{eq: ME W2 Lip b(p)} which we believe will help the reader understanding why looking at this function in indeed a good idea. We start with a maximum principle for smooth functions on the space of probability measures. Since we are on an unbounded space, we will need to impose growth conditions at infinity.
 \begin{definition}
      A function $f:\mathcal{P}_2(\reels^d)\to \reels^d$ has polynomial growth if there exists a constant $C>0$ and an integer $q\geq 0$ such that 
      \[\forall \mu\in \mathcal{P}_2(\reels^d) \quad |f(\mu)|\leq C\left(1+\left(\int_{\reels^d} |x|^2\mu(dx)\right)^\frac{q}{2}\right).\]
      In particular for $q=1$ (resp. $q=2$), $f$ is said to have linear (resp. quadratic) growth at infinity. 
 \end{definition}
 \begin{lemma}
 \label{smooth comparison with 0}
 Let $f:[0,T]\times\reels^n\times \mathcal{P}_2(\reels^{2d})\to \reels^{2d}, b:[0,T]\times\reels^n\times \mathcal{P}_2(\reels^{2d})\to \reels^{n}$ be continuous functions with linear growth. Suppose there exist $Z$, a smooth function with quadratic growth at infinity, satisfying
\begin{equation*}
\begin{array}{c}
\partial_t Z+b(\noise,m)\cdot \nabla_\noise Z-\sigma_\noise\Delta_\noise Z\\
+\displaystyle\int_{\reels^{2d}} f(u,\noise,m)
\cdot D_\gamma Z(t,m,\noise,u) m(du)-\sigma_x\int_{\reels^{2d}}\text{Tr}\left(B_dD_{u} \left[D_mZ\right](t,m,\noise,u)\right)m(du)\geq 0,\\
\text{ for } (t,m,\noise)\in (0,T)\times\mathcal{P}_2(\reels^{2d})\times \reels^n.
\end{array}
\end{equation*}
with $Z|_{t=0}\geq 0$. Then 
\[\forall t<T, \forall (\noise,m)\in \reels^n\times \mathcal{P}_2(\reels^{2d}) \quad Z(t,\theta,m)\geq 0.\]
 \end{lemma}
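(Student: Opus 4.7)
The plan is to prove this maximum principle via a probabilistic representation along characteristics of an associated conditional McKean--Vlasov system, exploiting the fact that the assumed differential inequality forces the process $s\mapsto Z(t-s,\noise_s,m_s)$ to be a supermartingale along these characteristics.

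\emph{Step 1: Characteristics.} On an auxiliary probability space, introduce two independent Brownian motions $(B^\noise_s)_{s\geq 0}$ in $\reels^n$ and $(B_s)_{s\geq 0}$ in $\reels^d$, and consider the conditional McKean--Vlasov system
\[
\left\{
\begin{array}{l}
d\noise_s = -b(\noise_s, m_s)\,ds + \sqrt{2\sigma_\noise}\,dB^\noise_s,\quad \noise_0=\noise, \\
dU_s = -f(U_s, \noise_s, m_s)\,ds + \sqrt{2\sigma_x}\,\binom{I_d}{I_d}dB_s,\quad \mathcal{L}(U_0)=m, \\
m_s = \mathcal{L}\!\left(U_s\,\big|\,\sigma(B_u, B^\noise_u : u\leq s)\right).
\end{array}
\right.
\]
The crucial point is that both $\reels^d$-blocks of $U_s\in\reels^{2d}$ are driven by the same Brownian motion $B$: this is exactly what produces the shared-noise block matrix $B_d$ in the second-order operator of the hypothesis. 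Continuity and linear growth of $b,f$ are enough to obtain at least one weak solution by tightness, and Grönwall gives uniform second-moment bounds for $(\noise_s, U_s)$, and hence uniform bounds for $m_s$ in $\mathcal{P}_2(\reels^{2d})$, on $[0,t]$.

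\emph{Step 2: Itô's formula along characteristics.} Applying the Itô chain rule for smooth functions on $\reels^n\times\mathcal{P}_2(\reels^{2d})$ (as in the common-noise MFG literature, \cite{mfg-common-noise,convergence-problem}) to $s\mapsto Z(t-s,\noise_s,m_s)$, one finds that its finite-variation part equals
\[
-\Bigl[\partial_t Z + b\cdot\nabla_\noise Z - \sigma_\noise\Delta_\noise Z + \int_{\reels^{2d}} f\cdot D_m Z\, m_s(du) - \sigma_x\!\int_{\reels^{2d}}\!\mathrm{Tr}\bigl(B_d D_u[D_m Z]\bigr)m_s(du)\Bigr],
\]
which is nonpositive by the hypothesis on $Z$. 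The $B_d$ structure arises precisely from the cross-covariance between the particles of $m_s$ created by the shared driver $B$ acting identically on both $\reels^d$-components. Consequently $(Z(t-s,\noise_s,m_s))_{s\in[0,t]}$ is a local supermartingale.

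\emph{Step 3: Conclusion.} The quadratic growth of $Z$ combined with the uniform second-moment bounds on $(\noise_s,U_s)$ ensures that all expectations are finite, and that the at-most-linear growth of $\nabla_\noise Z$ and $D_m Z$ makes the local martingale part a true martingale. Taking expectations therefore yields
\[
Z(t,\noise,m) = \esp{Z(t,\noise_0,m_0)} \geq \esp{Z(0,\noise_t,m_t)} \geq 0,
\]
where the last inequality is the assumed nonnegativity of $Z|_{t=0}$. Since $(t,\noise,m)$ was arbitrary, this gives the claim.

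\emph{Main obstacle.} The technical heart of the argument is the rigorous justification of the Itô formula on $\mathcal{P}_2(\reels^{2d})$ with the common-noise diffusion encoded by $B_d$, and the associated integrability needed to upgrade the local supermartingale to a genuine one under only quadratic growth of $Z$. If one prefers, a mollification-compactness argument on $(f,b)$ can be used to reduce to a Lipschitz setting where existence of the characteristic flow is standard, and the conclusion passes to the limit by stability in Wasserstein distance. Everything else (moment bounds, expectation manipulations) is routine.
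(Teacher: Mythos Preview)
Your probabilistic approach is a legitimate alternative to the paper's purely analytic penalization argument, but the characteristic system you wrote down is misspecified in a way that breaks Step~2. You set
\[
m_s=\mathcal{L}\bigl(U_s\,\big|\,\sigma(B_u,B^\noise_u:u\le s)\bigr),
\]
i.e.\ you condition on the Brownian motion $B$ that drives $U_s$. With this choice $m_s$ is a \emph{random} measure driven by $B$, and the It\^o chain rule for $s\mapsto Z(t-s,\noise_s,m_s)$ then produces, in addition to the desired term $\sigma_x\int\mathrm{Tr}(B_d D_u D_m Z)\,dm_s$, a second-order correction of the form
\[
\sigma_x\iint_{\reels^{2d}\times\reels^{2d}}\mathrm{Tr}\!\left(B_d\,D^2_{mm}Z(t-s,\noise_s,m_s,u,u')\right)m_s(du)m_s(du'),
\]
coming from the quadratic variation of the measure-valued process. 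This extra $D^2_{mm}$ term is not present in the assumed differential inequality, so the supermartingale conclusion does not follow. The fix is simple: $B$ must be \emph{idiosyncratic} (one independent copy per particle, both $\reels^d$-blocks of the same particle sharing the same copy), and one should condition only on $B^\noise$, so that $m_s=\mathcal{L}(U_s\mid\sigma(B^\noise_u:u\le s))$ evolves by the deterministic (given $\noise$) Fokker--Planck equation with diffusion matrix $2\sigma_x B_d$. Then no $D^2_{mm}$ term appears and your Step~2 computation is correct.

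There is a second, softer gap in Step~3: you assert that $\nabla_\noise Z$ and $D_m Z$ have at most linear growth, but the statement only assumes $Z$ itself has quadratic growth, and this does not imply linear growth of the derivatives. With initial data $m\in\mathcal{P}_2(\reels^{2d})$ you only get $\sup_s\esp{|Z(t-s,\noise_s,m_s)|}<\infty$, which is not enough to upgrade the local supermartingale to a true one (and $Z$ is not bounded below, so Fatou does not help directly).

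For comparison, the paper's proof sidesteps both issues by a penalization argument: one adds $\alpha e^{\lambda s}(1+E_3^3(m)+|\noise|^3)+\alpha/(t-s)$ to $Z$, shows the perturbed function attains a minimum on a $\mathcal{W}_2$-compact set (cubic penalization dominates quadratic growth), and derives a contradiction at the minimum for $\lambda$ large using only the linear growth of $f,b$. This needs no SDE theory for the characteristics (only continuity of $f,b$), no growth assumptions on derivatives of $Z$, and no martingale integrability. Your route, once corrected, would require either an a~priori bound on derivatives of $Z$ or a similar third-moment localization to close the argument.
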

 \begin{proof}
For $m\in\mathcal{P}_3(\reels^d)$, we define 
 \[E_3(m)=\left(\int_{\reels^{2d}} |x|^3m(dx)\right)^\frac{1}{3}. \]
 
Let $\beta:s\mapsto \alpha e^{\lambda_\beta s}$ with $\alpha>0$, and consider $t\leq T$. We claim that 
\[Z_\alpha:(s,m,\theta)\to Z(s,m,\theta)+\beta(s)\left(1+E^3_3(m)+|\theta|^3\right)+\frac{\alpha}{t-s}\]
reach a point of minimum in $[0,t)\times\mathcal{P}_2(\reels^{2d})\times\reels^n$. Indeed, since $Z$ has at most quadratic growth, There exists a constant C such that the infimum of $Z_\alpha$ is reached on the set  
\[\{(s,\noise,m)\in [0,t-\frac{1}{C}]\times \mathcal{P}_2(\reels^{2d})\times \reels^n, \quad E_3(m),|\noise|\leq C\}.\]
By Markov inequality a subset of $\mathcal{P}_2(\reels^d)$ bounded for the pseudo-norm $E_3(\cdot)$ is tight and is, as a consequence, compact for the topology of narrow convergence by Prokhorov's Theorem. Since probability measures belonging to this set also have a uniformly integrable second moment (due to the bound on the third moment) this is a compact set for the $\mathcal{W}_2$ topology (see \cite{Ambrosio2013}). As $Z$ is continuous for this topology and $m\mapsto \int |z|^3m(dz)$ is lower semi-continuous with respect to narrow convergence, $Z_\alpha$ is a lower semi-continuous function for the $\mathcal{W}_2$ topology reaching its infimum in a compact. As a consequence, a minimum $(s^*,m^*,\theta^*)$ is achieved in this set. 

Let us now fix $\alpha>0$ and assume by contradiction that there exists a point $(s,m,\noise)$ such that $Z_\alpha(s,m,\noise)<0$ holds. By assumptions on $Z|_{t=0}$ this means the minimum is not reached for $s^*=0$. Optimality conditions imply by evaluating the equation satisfied by $Z_\alpha$ at this point of minimum that
\begin{align*}0&\geq \frac{\alpha}{(t-s^*)^2}+\beta'(s^*)\left(1+E^3_3(m^*)+|\theta^*|^3\right)\\
&+b(\noise^*,m^*)\cdot 3\beta(s^*)\noise^*|\noise^*|-\sigma_p\beta(s^*) (n+1)|\noise^*|\\
&+3\beta(s^*) \int f(u,\theta^*,m^*)\cdot u|u|m^*(du)-\sigma_x \int 3 \beta(s^*)\left((2d+1)|u|+2\sum_{i\leq d} \frac{u_i u_{n+i}}{|u|}\right)m^*(du).
\end{align*}
By the linear growth of $f,b$, there exists a constant $c$ depending on $n,d,b,f,\sigma_x,\sigma_p$ such that 
\begin{align*}
0&\geq \frac{\alpha}{(t-s^*)^2}+(\beta'(s^*)-c\beta(s^*))(1+|\noise^*|^3+E_3^3(m^*)).
\end{align*}
Choosing $\lambda_\beta>c$, we get a contradiction, hence 
\[\underset{[0,t]\times \mathcal{P}_2(\reels^{2d})\times \reels^n}{\inf} Z_\alpha \geq 0.\]
Letting $\alpha$ tends to 0 ends the proof since the inequality holds for any $t<T$.
 \end{proof}
 
 We may now proceed to prove a uniqueness result based on this maximum principle under the following assumption 
 \begin{hyp}
 \label{hyp: monotonie not strong b(p)}
     $\forall (\noise,X,Y,U,V)\in\reels^n\times \left(L^2(\Omega,\reels^d)\right)^4$, 
\begin{equation*}
    \begin{array}{c}
         \esp{\left(W_0(X,\noise,\mathcal{L}(X)-W_0(Y,\noise,\mathcal{L}(Y))\right)\cdot(X-Y)}\geq 0,  \\
    \esp{F(X,\noise,\mathcal{L}(X),U)-F(Y,\noise,\mathcal{L}(Y),V)\cdot(U-V)+(G(X,\noise,\mathcal{L}(X),U)-G(Y,\noise,\mathcal{L}(Y),V))\cdot(X-Y)}\geq 0,
    \end{array}
\end{equation*}
 \end{hyp}
 The assumption on $W_0$ is exactly $L^2-$monotonicity, while the condition imposed on $(G,F)$ is a joint $L^2-$monotonicity in $(x,W)$.

 \begin{prop}
 \label{prop: uniqueness W2 b(p)}
Under Hypotheses \ref{hyp: lip W2 b(p)} and \ref{hyp: monotonie not strong b(p)}, a smooth solution to \eqref{eq: ME W2 Lip b(p)} is $L^2$-monotone
 \end{prop}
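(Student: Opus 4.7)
The plan is to apply the maximum principle of Lemma \ref{smooth comparison with 0} to the function $Z$ defined in \eqref{def: Z b(p)}, viewed as a function on $[0,T]\times\mathcal{P}_2(\reels^{2d})\times\reels^n$. More precisely, for a coupling $\gamma\in\mathcal{P}_2(\reels^{2d})$ with marginals $\mu = \pi_1\gamma$ and $\nu = \pi_{-1}\gamma$, we set
\[
\mathcal{Z}(t,\gamma,\noise) = \int_{\reels^{2d}} \bigl(W(t,x,\noise,\pi_1\gamma) - W(t,y,\noise,\pi_{-1}\gamma)\bigr)\cdot (x-y)\,\gamma(dx,dy).
\]
At $t=0$, taking $(X,Y)\sim\gamma$ yields $\mathcal{Z}(0,\gamma,\noise) = \esp{(W_0(X,\noise,\mathcal{L}(X))-W_0(Y,\noise,\mathcal{L}(Y)))\cdot(X-Y)}$, which is non-negative by the $L^2$-monotonicity of $W_0$ postulated in Hypothesis \ref{hyp: monotonie not strong b(p)}. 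Once we show $\mathcal{Z}\geq 0$ on $[0,T]$, the $L^2$-monotonicity of $W(t,\cdot,\noise,\cdot)$ follows by specializing $\gamma=\mathcal{L}(X,Y)$.

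The core of the argument is to derive the PDE satisfied by $\mathcal{Z}$ from \eqref{eq: ME W2 Lip b(p)}. Differentiating in time and substituting $\partial_t W$ at the two points $(t,x,\noise,\pi_1\gamma)$ and $(t,y,\noise,\pi_{-1}\gamma)$, the transport in $x$ together with the integral term against $D_m W$ combine into a drift term on the coupling variable $u=(x,y)\in\reels^{2d}$: the effective drift is
\[
f(u,\noise,\gamma) = \bigl(F(x,\noise,\pi_1\gamma,W(t,x,\noise,\pi_1\gamma)),\, F(y,\noise,\pi_{-1}\gamma,W(t,y,\noise,\pi_{-1}\gamma))\bigr),
\]
which is of linear growth by Hypothesis \ref{hyp: lip W2 b(p)}. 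Since the idiosyncratic noise in \eqref{eq: ME W2 Lip b(p)} acts on $x$ only, when the same Brownian motion is used for both marginals of $\gamma$, the second order operator in $u$ that appears is exactly $\sigma_x \int \text{Tr}\bigl(B_d D_u[D_\gamma \mathcal{Z}](t,\gamma,\noise,u)\bigr)\gamma(du)$, with $B_d$ as in Lemma \ref{smooth comparison with 0}. The $\noise$-dependence produces the $b(\noise)\cdot\nabla_\noise$ and $\sigma_\noise\Delta_\noise$ terms directly. The remaining \emph{source} term, collecting what is left over once $\partial_t W$ is substituted on both sides, is
\[
\Psi(t,\gamma,\noise) = \int_{\reels^{2d}} \bigl[(G(x,\noise,\pi_1\gamma,W_x) - G(y,\noise,\pi_{-1}\gamma,W_y))\cdot (x-y) + (F(x,\noise,\pi_1\gamma,W_x)-F(y,\noise,\pi_{-1}\gamma,W_y))\cdot (W_x-W_y)\bigr]\,\gamma(dx,dy),
\]
where $W_x=W(t,x,\noise,\pi_1\gamma)$ and $W_y=W(t,y,\noise,\pi_{-1}\gamma)$. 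Taking $(X,Y)\sim\gamma$ and applying the joint $L^2$-monotonicity of $(F,G)$ in $(x,W)$ from Hypothesis \ref{hyp: monotonie not strong b(p)} shows $\Psi(t,\gamma,\noise)\geq 0$.

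Once this PDE is established, we invoke Lemma \ref{smooth comparison with 0}: the drift $f$ and $b$ have linear growth, $\mathcal{Z}$ is smooth with quadratic growth in $\gamma$ (because $W$ is Lipschitz in $\mathcal{W}_2$ by assumption), and $\mathcal{Z}|_{t=0}\geq 0$, so $\mathcal{Z}\geq 0$ on all of $[0,T]\times\mathcal{P}_2(\reels^{2d})\times\reels^n$. The $L^2$-monotonicity of $W(t,\cdot,\noise,\cdot)$ then follows. The main obstacle is the bookkeeping required to recognize, in the equation for $\mathcal{Z}$ on couplings, the exact second order structure with the block matrix $B_d$ and, especially, to show that the regrouping of the $\nabla_x W$ and the $D_m W$ integral terms coming from the two marginals combines cleanly into a drift in the $2d$-dimensional variable $u$; this hinges on the fact that the same realization of the Brownian motion is shared by both marginals when we couple the characteristic SDEs, and that the measure arguments of $W$ at $x$ and $y$ are precisely $\pi_1\gamma$ and $\pi_{-1}\gamma$.
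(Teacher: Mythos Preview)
Your proposal is correct and follows essentially the same approach as the paper: define the auxiliary function $Z$ on couplings, derive the linear transport-type PDE it satisfies on $\mathcal{P}_2(\reels^{2d})\times\reels^n$, identify the source term as the joint $L^2$-monotonicity expression for $(F,G)$, and apply Lemma \ref{smooth comparison with 0}. The only difference is presentational: the paper carries out explicitly the block-matrix computation of $D_yD_\gamma Z$ to verify that the cross terms cancel in $\text{Tr}(B_d D_yD_\gamma Z)$, whereas you invoke this structure heuristically via the shared Brownian motion for the two marginals---but you rightly flag this bookkeeping as the main technical point.
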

\begin{proof}
Let us define 
    \[\forall \gamma\in\mathcal{P}_2(\reels^{2d})\quad Z(t, \gamma,\noise)=\int_{\reels^{2d}}( W(t,u,\noise,\pi_d  \gamma)-W(t,v,\noise,\pi_{-d} \gamma))\cdot(x-y)\rangle  \gamma(du,dv).\]
Let us first take a look at the derivative of $y\mapsto D_\gamma Z(t, \gamma,\theta,y)$, $D_yD_\gamma Z$, for a fixed $(t, \gamma,\theta)$. Letting $y=(y_1,y_2)\in \left(\reels^d\right)^2$, taken at this point, it is a block matrix of the form 
\begin{align*}
    \begin{array}{c}
         D_yD_\gamma Z(t,\gamma,p,y)=\left(\begin{array}{cc}
           M_{11}  & M_{12} \\
            (M_{12})^T  & M_{22}
         \end{array}\right),  \\
         \displaystyle M_{11}=D_x^2W^{y_1}\cdot(y_1-y_2)+(D_x W^{y_1}+\left(D_x W^{y_1}\right)^T+\int D_{y_1}D_m W^1(t,u,\mu,\theta,y_1)\cdot(u-v)\gamma(du,dv),\\
         M_{12}=-D_x W^{y_2}-\left(D_x W^{y_1}\right)^T,\\
         \displaystyle M_{22}=D_x^2W^{y_2}\cdot(y_2-y_1)+(D_x W^{y_2}+\left( D_x W^{y_2}\right)^T-\int D_{y_2}D_m W^2(t,u,\nu,\theta,y_2)\cdot(u-v)\gamma(du,dv),\\
         W^{y_1}=W(t,y_1,\mu,\noise),\\
         W^{y_2}=W(t,y_2,\nu,\noise),\\
         \pi_d  \gamma=\mu \quad \pi_{-d} \gamma=\nu.\\
    \end{array}
\end{align*}
Since the terms composing $M_{12}$ and its transpose are the opposite of terms appearing in $M_{11},M_{22}$, they obviously cancel out in 
\begin{align*}
\text{Tr}\left(B_d D_yD_mZ(t,\gamma,p,y)\right)&=(\Delta_xW^{y_1}-\Delta_x W^{y_2})\cdot(y_1-y_2)\\
&+\int (\text{div}_{y_1}(D_m W^1)(t,u,\mu,\theta,y_1)-\text{div}_{y_2}(D_mW^2)(t,v,\nu,m,\theta,y_2))\cdot(u-v)\gamma(du,dv).
\end{align*}
Expressing $\partial_t Z$ using the fact that $W$ is a solutions of \eqref{eq: ME W2 Lip b(p)} and using the above equality, we get that $Z$ is a solution of
\begin{equation}
\label{Z b(p) supersol}
\left\{
\begin{array}{c}
\partial_t Z+b(\noise)\cdot \nabla_\noise Z-\sigma_\noise\Delta_\noise Z\\
+\displaystyle\int_{\reels^{2d}}\left(\begin{array}{c}
     F^{u}  \\
     F^{v}
\end{array}
\right)(u,v,\noise,\gamma)
\cdot D_\gamma Z(t,\gamma,\noise,u,v) \gamma(du,dv)\\
-\displaystyle\sigma_x\int_{\reels^{2d}}\text{Tr}\left(B_d D_{(u,v)} \left[D_mZ\right](t,\gamma,\noise,u,v)\right)\gamma(du,dv)\\
=\displaystyle\int_{\reels^{2d}} (F^{u}-F^{v})\cdot(W^{u} -W^{v})\gamma(du,dv)+\int_{\reels^{2d}}(G^{u}-G^{v})\cdot(u-v)\gamma(du,dv),\\
\text{ for } (t,\gamma,\noise)\in (0,T)\times\mathcal{P}_2(\reels^{2d})\times \reels^n,
\end{array}
\right.
\end{equation}
with $W^{u}=W(t,u,\noise,\pi_d \gamma), W^{v}=W(t,v,\noise,\pi_{-d}\gamma), F^{u}=F(u,\noise,\pi_d \gamma,W^{u})$ and so on. 
By assumptions on $W_0,F,G$ this leads to $Z_{t=0}\geq 0$ and 
\begin{equation*}
\begin{array}{c}
\partial_t Z+b(\noise)\cdot \nabla_\noise Z-\sigma_\noise\Delta_\noise Z\\
+\displaystyle\int_{\reels^{2d}}\left(\begin{array}{c}
     F^{u}  \\
     F^{v}
\end{array}
\right)
\cdot D_\gamma Z(t,\gamma,\noise,u,v) \gamma(du,dv)-\sigma_x\int_{\reels^{2d}}\text{Tr}\left(B_dD_{(u,v)} \left[D_\gamma Z\right](t,\gamma,\noise,u,v)\right)\gamma(du,dv)\geq 0,\\
\text{ for } (t,\gamma,\noise)\in (0,T)\times\mathcal{P}_2(\reels^{2d})\times \reels^n.
\end{array}
\end{equation*}
By Lemma \ref{smooth comparison with 0}, $Z$ is non-negative, which implies that $W$ is indeed $L^2-$monotone.
\end{proof}
\begin{remarque}
      It is evident that this method of proof would still work if $(G,F)$ were to depend on $\mathcal{L}(W(t,X,\noise,\mathcal{L}(X)))$ so long as the joint monotonicity condition is satisfied. This is also true for all results of wellposedness we present in Section 4. Since this corresponds to some MFGs of control, it seems to be a possible interesting extension. 
\end{remarque}
\subsubsection{Existence of global Lipschitz solutions}
The study of the auxiliary function $Z$ gave us both a uniqueness and a monotonicity result. Essentially, what we showed is that given a $L^2-$monotone initial condition and some assumptions on the data $(F,G)$ the solution propagates monotonicity. However, monotonicity, usually only gives a one-sided bound on the derivative. Since we are interested in getting Lipschitz estimates to show the long time existence of Lipschitz solutions, we make stronger assumptions to get a bound from below. 
\begin{hyp}
\label{hyp: (G,F,w0) b(p) W2}
     $\exists \alpha >0 \quad \forall (\noise,X,Y,U,V)\in\reels^n\times \left(L^2(\Omega,\reels^d)\right)^4$, 
\begin{equation}
\label{Hyp: W0 b(p) W2}
         \esp{\left(W_0(X,\noise,\mathcal{L}(X)-W_0(Y,\noise,\mathcal{L}(Y))\right)\cdot(X-Y)}\geq \alpha \|W_0(X,\noise,\mathcal{L}(X))-W_0(Y,\noise,\mathcal{L}(Y))\|^2_{L^2},
\end{equation}
\begin{gather}
\label{hyp: (F,G) b(p) W2}
    \nonumber \esp{(F(X,\noise,\mathcal{L}(X),U)-F(Y,\noise,\mathcal{L}(Y),V))\cdot(U-V)+(G(X,\noise,\mathcal{L}(X),U)-G(Y,\noise,\mathcal{L}(Y),V))\cdot(X-Y)}\\
    \geq \alpha \underset{\lambda \in[0,1]}{\min}\|G(X,\noise,\mathcal{L}(X),\lambda U+(1-\lambda)V)-G(Y,\noise,\mathcal{L}(Y),\lambda U+(1-\lambda )V)\|^2_{L^2}.
\end{gather}
\end{hyp}
\begin{remarque}
The last inequality may appear unusual, this is a way of translating the fact that the function $(X,U)\mapsto (F(X,\mathcal{L}(X),U),G(X,\mathcal{L}(X),U))$ is monotone, with a monotonicity in $X$ that is not too degenerate. For Lipschitz data $(F,G)$, it is weaker than strong monotonicity for which the inequality would be 
\[\geq \alpha \|X-Y\|^2_{L^2},\]
as it allows for $G$ to be degenerate monotone and bounded. We already made such remark in the case of finite state space mean field game (see \cite{noise-add-variable} Corollary 3.13) and extend it here to this setting. 
\begin{exemple}
A simple example, satisfying the requirements of Hypothesis \ref{hyp: (F,G) b(p) W2} is to take $F(x,\theta,\mu,u)\equiv \tilde{F}(u)$ for any monotone function $\tilde{F}$ and 
\[G(x)=x^+\wedge 1.\]
Such $G$ is not strongly monotone but satisfy
\[\forall (x,y)\in \reels^d \quad  (G(x)-G(y))\cdot(x-y)\geq |G(x)-G(y)|^2.\] 
\end{exemple}
\end{remarque}
Under Hypothesis \ref{hyp: (F,G) b(p) W2}, we can show that the relation $\eqref{Hyp: W0 b(p) W2}$ propagates for any time. This will be done by mean of another, closely related, auxiliary function
\begin{gather}
\label{def: Z beta b(p)}
Z_\beta(t,\gamma,\noise)=\\
\nonumber \int_{\reels^{2d}} \bigg((W(t,u,\noise,\pi_d \gamma)-W(t,v,\noise,\pi_{-d}\gamma))\cdot(u-v)-\beta(t)|W(t,u,\noise,\pi_d \gamma)-W(t,v,\noise,\pi_{-d} \gamma)|^2 \bigg)\gamma(du,dv),
 \end{gather}
 for some smooth function of time $\beta$ to be determined later on. While the key idea is still to apply the maximum principle, we now work with Lipschitz solutions. As a consequence, it is necessary to adapt the arguments of Proposition \ref{prop: uniqueness W2 b(p)} to non-smooth functions. 

 \quad
 
 To that end, we use tools from the theory of viscosity solutions to show that $Z_\beta$ satisfies a comparison principle with $0$. The developments thereafter are very much in the spirit of \cite{noise-add-variable}, though some arguments have to be adapted to account for the fact that we are now considering an equation on the space of measure. First, we have to be clear by what we mean by viscosity solutions. In utmost generality, the adaptation of the theory of viscosity solutions to equations on the space of measure is a notoriously difficult problem. Since we are here interested in comparison argument of \eqref{def: Z beta b(p)} with a smooth function (namely the constant function equal to 0, to show non negativity), comparison arguments are much simpler. The extension of viscosity solutions to our problem raises few difficulties. 
Let $(k,l)\in\mathbb{N}^2$,$\Gamma$ be a symmetric, positive semi-definite $k\times k$ matrix, $\mathcal{H}$ be a continuous mapping from $\reels^+\times\reels^k\times \mathcal{P}_2(\reels^l)\times\reels\times\reels^k\to\reels$ and $f$ a continuous bounded function from $\reels^+\times\reels^k\times\mathcal{P}_2(\reels^l)\times \reels^l\to \reels^l$. Consider the following nonlinear partial differential equation
\begin{gather}
\nonumber \partial_t \mathcal{U}(t,\noise,\mu)+\mathcal{H}(\noise,\mu,\mathcal{U}(t,\noise,\mu),\nabla_\noise \mathcal{U}(t,\noise,\mu))-\text{Tr}\left(\Gamma D^2_\noise \mathcal{U}(t,\noise,\mu)\right)\\
\label{visc}
\tag{E} +\int_{\reels^l} f(t,\noise,\mu,y)\cdot D_m \mathcal{U}(t,\noise,\mu,y)\mu(dy)-\int_{\reels^l}\text{Tr}\left(B_l D_y D_m \mathcal{U}(t,\noise,\mu,y) \right)\mu(dy)=0,\\
\nonumber \text{ for } (t,\noise,\mu) \text{ in } (0,T)\times\reels^k\times\mathcal{P}_2(\reels^l).
\end{gather}
We introduce the space of test function $H_{test}$: a smooth function $\varphi:[0,T]\times \reels^k\times\mathcal{P}_2(\reels^l)$ is said to belong to $H_{test}$ if 
\begin{enumerate}
    \item[-] $\exists C>0, \quad \forall \mu \in\mathcal{P}_2(\reels^l) \quad \|\varphi(\cdot,\mu)\|_{C^{1,2}}\leq C$ 
    \item[-] $\forall (t,\noise,\mu), \quad y\mapsto D_m\varphi(t,\noise,\mu,y),y\mapsto D_yD_m\varphi(t,\noise,\mu,y)$ are continuous and 
    \[|D_m\varphi(t,\noise,\mu,y)|+|D_yD_m\varphi(t,\noise,\mu,y)|\leq C(1+|y|^{q-1}),\]
    with $q=\sup\{p, \mu\in\mathcal{P}_p(\reels^l)\}$,
    uniformly in $(t,\mu,\noise)$
\end{enumerate}
\begin{definition}
Let $u:(0,T)\times\reels^k\times\mathcal{P}_2(\reels^l)\to \reels$ be a continuous function. We say that u is a viscosity supersolution of \eqref{visc} if for any function $\varphi\in H_{test}$ such that a point of minimum $(t^*,\noise^*,\mu^*)$ of $u-\varphi$ is achieved in $(0,T)\times\reels^k\times\mathcal{P}_2(\reels^l)$ the following holds
\begin{gather*}
\partial_t \varphi(t^*,\noise^*,\mu^*)+\mathcal{H}(t^*,\noise^*,\mu^*,u(t^*,\noise^*,\mu^*),\nabla_q\varphi(t^*,\noise^*,\mu^*))-\text{Tr}\left(\Gamma D^2_q\varphi(t^*,\noise^*,\mu^*)\right)\\
 +\int_{\reels^l} f(t^*,\noise^*,\mu^*,x)\cdot D_m \varphi (t^*,\noise^*,\mu^*,x)\mu^*(dx)-\int_{\reels^l}\text{Tr}\left(B_l D_x D_m \varphi(t^*,\noise^*,\mu^*,x) \right)\mu^*(dx)\geq 0.
\end{gather*}
\end{definition}
One can readily check that as soon as $\Gamma$ is indeed symmetric positive semi-definite(in which case degenerate ellipticity holds) a classical supersolution of $(E)$ is indeed a viscosity supersolution thanks to Proposition \ref{prop: comparison minimum measure smooth}, thereby justifying this definition. We may now state the following lemma
\begin{lemma}
\label{lemma: Z supersol b(p) W2}
Consider $W$ a Lipschitz solution to \eqref{eq: ME W2 Lip b(p)} defined on $[0,T)$ for some $T>0$, and let $Z_\beta$ be defined as in \eqref{def: Z beta b(p)} for a $C^1$ positive function of time $\beta$. 
Then, $Z_\beta$ is a viscosity supersolution of 
\begin{gather}
\label{eq: Z supersol b(p) W2}
\nonumber \partial_t Z_\beta(t,\noise,\gamma)+b(\noise)\cdot \nabla_\noise Z_\beta-\sigma_\noise\Delta_\noise Z_\beta\\
\nonumber+\int_{\reels^{2d}}\left(\begin{array}{c}
     F^{u}  \\
     F^{v}
\end{array}
\right)
\cdot D_\gamma Z_\beta(t,\noise,\gamma,u,v) \gamma(du,dv)-\sigma_x\int_{\reels^{2d}}\text{Tr}\left(BD_{(u,v)} D_\gamma Z_\beta(t,\noise,\gamma ,u,v)\right)\gamma(du,dv)\\
=\int_{\reels^{2d}} (F^{u}-F^{v})\cdot(W^{u}-W^{v})\gamma (du,dv)+\int_{\reels^{2d}}(G^{u}-G^{v})\cdot(u-v)\gamma (du,dv)\\
\nonumber-2\beta(t) \int_{\reels^{2d}} ( G^{u}-G^{v})\cdot(W^{u}-W^{v}) \gamma (du,dv)-\frac{d\beta}{dt}\int_{\reels^{2d}} |W^{u}-W^{v}|^2\gamma (du,dv),\\
\nonumber\text{ for }(t,\noise,\gamma)\in (0,T)\times\reels^n\times\mathcal{P}_2(\reels^{2d}).
\end{gather}
with $W^{u}=W(t,u,\noise,\pi_d \gamma ), W^{v}=W(t,v,\noise,\pi_{-d} \gamma ), F^{u}=F(u,\noise,\pi_d \gamma ,W^{u})$ and so on.
\end{lemma}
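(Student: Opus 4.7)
The plan is to establish the claim first for smooth $W$ by direct computation, treating the PDE as a classical identity, and then transfer it to the general Lipschitz case by approximation together with the stability of viscosity supersolutions under locally uniform convergence.

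First, assume $W$ is $C^{1,2}$ in all arguments. Writing $W^u$ for $W(t,u,\noise,\pi_d\gamma)$ and $W^v$ for $W(t,v,\noise,\pi_{-d}\gamma)$, I differentiate $Z_\beta$ directly. The time derivative splits into
\[
\partial_t Z_\beta=-\beta'(t)\!\int |W^u-W^v|^2\gamma(du,dv)+\!\int\bigl(\partial_t W^u-\partial_t W^v\bigr)\cdot\Bigl((u-v)-2\beta(t)(W^u-W^v)\Bigr)\gamma(du,dv),
\]
and I substitute \eqref{eq: ME W2 Lip b(p)} for $\partial_t W^u, \partial_t W^v$. Simultaneously I compute $\nabla_\noise Z_\beta$, $\Delta_\noise Z_\beta$, $D_\gamma Z_\beta$, and $D_{(u,v)} D_\gamma Z_\beta$. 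The structural observation already used in the proof of Proposition \ref{prop: uniqueness W2 b(p)}, namely that the off-diagonal blocks of $D_{(u,v)} D_\gamma Z_\beta$ cancel against their transposes when contracted with $B_d$, reduces the double-trace term to a sum of diagonal contributions. All transport, diffusion, and measure-derivative terms on the LHS of \eqref{eq: Z supersol b(p) W2} then exactly match those produced by substituting the PDE for $\partial_t W^u-\partial_t W^v$ against the linear factor $(u-v)$. What remains from the quadratic factor $-2\beta(t)(W^u-W^v)$ is the source $-2\beta(t)(G^u-G^v)\cdot(W^u-W^v)$, which combined with $-\beta'(t)|W^u-W^v|^2$ and the two monotonicity sources $(F^u-F^v)\cdot(W^u-W^v)$ and $(G^u-G^v)\cdot(u-v)$ yields the stated identity classically, and hence in the viscosity supersolution sense.

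For a general Lipschitz solution $W$, I approximate the data $(F,G,W_0,b)$ by smooth functions $(F^\varepsilon,G^\varepsilon,W_0^\varepsilon,b^\varepsilon)$ via mollification in the Euclidean variables and Lions' lift in the measure variable, preserving the uniform Lipschitz estimates. By Theorem \ref{lip sol b(p)}, the Lipschitz solutions $W^\varepsilon$ exist on a common interval $[0,T')$ independent of $\varepsilon$, and by bootstrapping the fixed point definition (as in the third step of Lemma \ref{Lemma: gradient U=W}) they are smooth on this interval. The first step then gives that the associated $Z_\beta^\varepsilon$ satisfies the target equation classically, hence as a viscosity supersolution. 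Data-continuity of the fixed point yields locally uniform convergence $W^\varepsilon\to W$, and therefore locally uniform convergence $Z_\beta^\varepsilon\to Z_\beta$ on $\mathcal{W}_2$-compact subsets of $[0,T)\times\reels^n\times\mathcal{P}_2(\reels^{2d})$. Standard stability of viscosity supersolutions under locally uniform convergence then transfers the inequality to the limit.

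The main obstacle is the viscosity limit step, since $\mathcal{P}_2(\reels^d)$ is not locally compact and one cannot extract minima of $Z_\beta-\varphi$ by compactness alone. The cubic moment penalization $m\mapsto \alpha E_3^3(m)$ used in Lemma \ref{smooth comparison with 0} is exactly the tool needed here: after adding this penalization together with an $\alpha/(t^*-s)$ blow-up term, the infimum of $Z_\beta-\varphi-\text{penalization}$ is attained on a $\mathcal{W}_2$-compact set, which enables the standard doubling-of-variables argument to produce nearby minima of $Z_\beta^\varepsilon-\varphi-\text{penalization}$ at which the classical inequality holds. Letting first $\varepsilon\to 0$ and then $\alpha\to 0$ yields the desired supersolution inequality at the original minimum point; verifying that the penalization stays within the $H_{test}$ class and that every measure-derivative term passes to the limit is technical but parallel to the argument already carried out in Lemma \ref{smooth comparison with 0}.
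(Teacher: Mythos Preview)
Your approach is genuinely different from the paper's and contains a real gap.

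The paper does \emph{not} approximate by smooth solutions. Instead, it works directly with the Lipschitz solution $W$ through its dynamic programming representation (Lemma~\ref{prop: representation of lipschitz solutions}). Concretely, for the linear part of $Z_\beta$ it uses the DPP for $W$ together with the algebraic identity $x-y=X_{t-s}-Y_{t-s}+\int_0^{t-s}(F^x_u-F^y_u)\,du$ (same Brownian motion drives both characteristics), and for the quadratic part $-\beta(t)\int|W^u-W^v|^2\,d\gamma$ it applies Jensen's inequality to the DPP formula, which is what produces the \emph{inequality} rather than an equality. This yields an integral inequality of the form $Z_\beta(t,\cdot)\geq \mathbb{E}[Z_\beta(s,\cdot_{t-s})]+\text{source terms}$ valid for all $s\leq t$. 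The viscosity property then follows by touching from below with $\varphi\in H_{test}$: at a minimum of $Z_\beta-\varphi$ one substitutes $\varphi$ into the integral inequality, divides by $t-s$, and passes to the limit using the smoothness of $\varphi$ (via Lemma~5.15 of \cite{convergence-problem}). No derivative of $W$ in the measure variable is ever needed.

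Your argument, by contrast, requires $W^\varepsilon$ to be smooth \emph{in the measure variable} so that \eqref{eq: ME W2 Lip b(p)} holds classically and you can substitute it for $\partial_t W^u,\partial_t W^v$. Your citation of Lemma~\ref{Lemma: gradient U=W} does not deliver this: that lemma only bootstraps $C^1$ regularity in the \emph{space} variable $x$, leaving the measure regularity untouched. Obtaining $D_m W^\varepsilon$ continuous from smooth data is a deep result (essentially the content of \cite{convergence-problem}) and is not available within the framework of this paper; indeed, the whole point of Lipschitz solutions is to avoid needing it. Without this, your Step~1 never applies to the approximants, and the approximation scheme collapses.

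A secondary point: even in the genuinely smooth case, the computation does not yield an \emph{identity}. When the second-order operators $-\sigma_\theta\Delta_\theta$, $-\sigma_x\text{Tr}(B_d D_{(u,v)}D_\gamma\,\cdot\,)$ act on the quadratic part $-\beta(t)|W^u-W^v|^2$, they produce extra nonnegative ``carr\'e du champ'' terms (e.g.\ $+2\sigma_\theta\beta(t)|\nabla_\theta(W^u-W^v)|^2$) that survive on the left-hand side. This gives the supersolution inequality but not equality; your sentence ``yields the stated identity classically'' is inaccurate, though the sign is in your favour.
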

\begin{proof}
\noindent\textit{Step 1: an inequality satisfied by Z}\\
Fix $(t,x_0,y_0,\noise_0,\gamma_0)\in[0,T)\times(\reels^d)^2\times(\reels^n)^2\times\mathcal{P}_2(\reels^{2d})$,
we are first going to take a look at 
\[V(t,x_0,y_0,\noise_0,m_0)=\langle W(t,x_0,\noise_0,\pi_dm_0)-W(t,y_0,\noise_0,\pi_{-d}m_0),x_0-y_0\rangle.\]
Our goal here is to show $Z_\beta$ satisfy a dynamic programming principle in the form of an inequality. For that we are going to go all the way from $V$ to $Z_\beta$ while using the dynamic programming principle satisfied by $W$. Which is why we introduce the following SDE-SPDE system
\[
\left\{
\begin{array}{c}
d\noise_s=-b(\noise_s)ds+\sqrt{2\sigma_\noise}dB^\noise_s \quad \noise_{t=0}=\noise_0,\\
dX_s=-F(X_s,\noise_s,\mu_s,W(t-s,X_s,\noise_s,\mu_s))ds+\sqrt{2\sigma_x} dB_s \quad X_{t=0}=x_0,\\
dY_s=-F(Y_s,\noise_s,\nu_s,W(t-s,Y_s,\noise_s,\nu_s))ds+\sqrt{2\sigma_x} dB_s \quad Y_{t=0}=y_0,\\
     d\gamma_s=(-\text{div}\left(\left(\begin{array}{c}F(x,\noise_s,\mu_s,W(t-s,x,\noise_s,\mu_s))\\ F(y,\noise_s,\nu_s,W(t-s,y,\noise_s,\nu_s)) \end{array}\right)\gamma_s\right)+\sigma_x \text{Tr}\left(BD^2_{(x,y)}\gamma_s\right))ds \quad \gamma_{t=0}=\gamma_0,\\
\end{array}
\right.
\]
for $(B^\noise_s,B_s)_{s\geq 0}$ a $d+n$ dimensional Brownian motion, with the notation $\mu_s=\pi_d \gamma_s$, $\nu_s=\pi_{-d} \gamma_s$. Since $t<T$, and $W$ is Lipschitz at least up until $T$, the well-posedness of $(\gamma_s)_{s\in[0,t]}$ and hence of $(X_s,Y_s)_{s\in[0,t]}$ is fairly simple \cite{probabilistic-mfg}.
Throughout the proof we will be using the notation  $W^x_s=W(t-s,X_s,\noise_s,\mu_s)$, $F^x_s=F(X_s,\noise_s,\mu_s,W^x_s)$, $G^x_s=G(X_s,\noise_s,\mu_s,W^x_s)$ and similarly $W^y_s=W(t-s,Y_s,\noise_s,\nu_s)$ and so on. 
By the dynamic programming principle satisfied by $W$, we know
\[V(t,x_0,y_0,\noise_0,m_0)=\esp{\langle W^x_s-W^y_s,x-y\rangle+\int_0^{t-s}\langle G_u^x-G_u^y,x-y\rangle du}.\]
Because $(X_s)_{s\in[0,t]}$ and $(Y_s)_{s\in[0,t]}$ have been generated with the same set of Brownian motion the following holds 
\[x-y=X_{t-s}-Y_{t-s}+\int_0^{t-s}(F^x_u-F^y_u)du.\]
From which we get 
\begin{align}
\label{DPP b(p)}
V(t,x_0,y_0,\noise_0,\gamma_0)&=\esp{V(s,X_{t-s},Y_{t-s},\noise_{t-s},\gamma_{t-s})}\\
&\nonumber +\esp{\int_0^{t-s}\left(\langle G_u^x-G_u^y,x-y\rangle+\langle F^x_u-F^y_u,W^x_s-W^y_s\rangle\right) du}.
\end{align}
We are also going to need the following formula
\begin{equation}
\label{square martingale inequality b(p)}
\forall s\in[0,t] \quad |W(t,x_0,\noise_0,\mu_0)-W(t,y_0,\noise_0,\nu_0)|^2\leq \esp{\left|W^x_s-W^y_s+\int_0^{t-s}(G^x_u-G^y_u)du\right|^2},
\end{equation}  
which is a consequence of Jensen inequality applied to \eqref{Def: Lipschitz solution b(p)}. Let us now take $\beta$ be a $C^1$ positive function of time, and consider \begin{align*}\Lambda(t,x_0,y_0,\noise_0,\gamma_0)&=V(t,x_0,y_0,\noise_0,\gamma_0)-\beta(t)|W(t,x_0,\noise_0,\mu_0)-W(t,y_0,\noise_0,\nu_0)|^2,\\
&=\underbrace{V(t,x_0,y_0,\noise_0,\gamma_0)}_{I}-\beta(s)\underbrace{|W(t,x_0,\noise_0,\mu_0)-W(t,y_0,\noise_0,\nu_0)|^2}_{II}\\
&\quad \quad\quad \quad -(\beta(t)-\beta(s))|W(t,x_0,\noise_0,\mu_0)-W(t,y_0,\noise_0,\nu_0)|^2.
\end{align*}
Applying \eqref{DPP b(p)} to $I$ and \eqref{square martingale inequality b(p)} to $II$ leads to the following inequality:
\begin{align*}
\Lambda(t,x_0,y_0,\noise_0,\gamma_0)&\geq \esp{\Lambda(s,X_{t-s},Y_{t-s},\noise_{t-s},\gamma_{t-s})+\int_0^{t-s}\left(\langle G_u^x-G_u^y,x-y\rangle+\langle F^x_u-F^y_u,W^x_s-W^y_s\rangle\right) du}\\
&-\beta(s)\esp{\left|\int_0^{t-s} (G^x_u-G^y_u)du\right|^2+2\int_0^{t-s}\langle G^x_u-G^y_u,W^x_s-W^y_s\rangle du}\\
&-(\beta(t)-\beta(s))|W(t,x_0,\noise_0,\mu_0)-W(t,y_0,\noise_0,\nu_0)|^2.
\end{align*}
We may finally take a look at our end goal 
\[Z_\beta(t,\gamma_0,\noise_0)=\int_{\reels^{2d}} \Lambda(t,x_0,y_0,\noise_0,\gamma_0) \gamma_0(dx_0,dy_0).\]
For that, let us first remind that for any continuous function f, $s<t$ letting $\mathcal{F}^\noise_s=\sigma((\noise_u)_{u\leq s})$
\begin{align*}
\forall s\leq t \quad \int_{\reels^{2d}}\esp{f\left((X,Y)_s^{(x_0,y_0)}\right)}\gamma_0(dx_0,dy_0)&=\int_{\reels^{2d}}\esp{\espcond{f\left((X,Y)^{(x_0,y_0)}_s\right)\gamma_0(dx_0,dy_0)}{\mathcal{F}^\noise_s}},\\
&=\esp{f(x,y)\gamma_s(dx,dy)}.
\end{align*}
\begin{comment}
Indeed, letting $\rho^{(x_0,y_0)}_s=\mathcal{L}\left((X,Y)^{(x_0,y_0)}_s|\mathcal{F}^\noise_s\right)$, $(\rho^{(x_0,y_0)}_s)_{s\in[0,t]}$ is a pathwise weak solution of 
\[d\rho^{(x_0,y_0)}_s=(-\text{div}\left(\left(\begin{array}{c}F(x,\noise_s,\mu_s,W(t-s,x,\noise_s,\mu_s))\\ F(y,\noise_s,\nu_s,W(t-s,y,\noise_s,\nu_s)) \end{array}\right)\rho^{(x_0,y_0)}_s\right)+\sigma_x \text{Tr}\left(BD^2_{(x,y)}\rho^{(x_0,y_0)}_s\right))ds,\]
with initial condition $\rho^{(x_0,y_0)}_0=\delta_{x_0}\delta_{y_0}$. Using the definition of weak solutions, $(\tilde{m}_s)_{s\in[0,t]}$ defined by 
\[\tilde{m}_s=\int_{\reels^{2d}} \rho_s^{(x_0,y_0)}m_0(dx_0,dy_0),\]
is still a pathwise weak solution of this equation but with initial condition $\tilde{m}_0=m_0$.
\end{comment}
which is a consequence of almost sure pathwise uniqueness to our SPDE \cite{probabilistic-mfg}. 
With that in mind, it is easy to see that 
\begin{align*}
\int_{\reels^{2d}}\esp{\Lambda (s,X^{x_0}_{t-s},Y^{y_0}_{t-s},\noise_{t-s},\gamma_{t-s})}\gamma_0(dx_0,dy_0)&=\esp{\int_{\reels^{2d}} \Lambda (s,x,y,\noise_{t-s},\gamma_{t-s})\gamma_{t-s}(dx,dy)},\\
&=\esp{Z(s,\noise_{t-s},m_{t-s})}.
\end{align*}
Finally, we obtain the inequality satisfied by $Z$
\begin{align}
\label{Z inequality b(p)}
\nonumber Z(t,\noise_0,\gamma_0)&\geq \esp{Z(s,\noise_{t-s},\gamma_{t-s})+\int_{\reels^{2d}}\int_0^{t-s}\left(\langle G_u^x-G_u^y,x-y\rangle+\langle F^x_u-F^y_u,W^x_s-W^y_s\rangle\right) \gamma_0(dx,dy)}du\\
\nonumber &-\beta(s)\int_{\reels^{2d}}\esp{\left|\int_0^{t-s} (G^x_u-G^y_u)du\right|^2+2\int_0^{t-s}\langle G^x_u-G^y_u,W^x_s-W^y_s\rangle du}\gamma_0(dx,dy)\\
&-(\beta(t)-\beta(s))\int_{\reels^{2d}}|W(t,x,\noise_0,\mu_0)-W(t,y,\noise_0,\nu_0)|^2\gamma_0(dx,dy).
\end{align}
\noindent\textit{Step 2: viscosity supersolution}\\
Let $\varphi\in H_{test}$  and assume that there exists $(t^*,\noise^*,\gamma^*)$ such that
\[\min (Z-\varphi)=Z(t^*,\noise^*,\gamma^*)-\varphi(t^*,\noise^*,\gamma^*).\]
By adding a constant to $\varphi$ if necessary, we may assume without loss of generality that this minimum is equal to 0. 
Because we must have $Z(s,\noise_{t-s},\gamma_{t-s})\geq \varphi(s,\noise_{t-s},\gamma_{t-s})$ and the equality holds at $(t^*,\noise^*,\gamma^*)$, $\varphi$ satisfies the inequality \eqref{Z inequality b(p)} for any $s\leq t$. Applying Lemma 5.15 of \cite{convergence-problem} to $\varphi$, and dividing by $t-s$ we can take the limit without much difficulty. Pointwise convergence holds thanks to the mean value theorem, and we may directly apply the dominated convergence theorem as we have uniform bounds on $\varphi,\partial_t\varphi, \nabla_\noise \varphi, D^2_\noise \varphi$ and the integral terms in $D_\gamma\varphi, D_yD_\gamma \varphi$.
\end{proof}
\begin{remarque}
    Observe that the proof still works if we only assume a test function $\varphi$ satisfy the assumptions we put on test functions in $H_{test}$ only in a neighbourhood of the point of minimum $(t^*,\noise^*,\gamma^*)$. Indeed, by introducing an appropriate sequence of stopping time we can then bound $\varphi$, and since the convergence of $\gamma_{t^*-s}$ to $\gamma^*$ holds in the $\mathcal{W}_q$ topology for $q=\sup\{p, \gamma^*\in\mathcal{P}_p(\reels^l)\}$ we know that for $t^*-s$ sufficiently small $(t^*-s,\noise_{t^*-s},\gamma_{t^*-s})$ belongs to any neighbourhood (in $\mathcal{W}_q$), letting the conclusion be unchanged. In particular, this justifies that we may take 
    \[\varphi: (s,\noise,\gamma)\mapsto -|\noise|^3-E_3^3(\gamma)-\frac{1}{t-s},\]
    as we did in the proof of Proposition \ref{prop: uniqueness W2 b(p)}.
\end{remarque}
With this technical lemma proven, we may use a comparison principle with $0$ to show the following
\begin{lemma}
\label{Z beta positive b(p)}
    Under Hypothesis \ref{hyp: lip W2 b(p)} and \ref{hyp: (G,F,w0) b(p) W2}, if $W$ is a Lipschitz solution to \eqref{eq: ME W2 Lip b(p)} on $[0,T)$ for some $T>0$, then there exists a strictly positive function of time $\beta$ such that 
    \[\forall t<T, \quad (\gamma,\theta)\in\mathcal{P}_2(\reels^{2d})\times \reels^n \quad Z_\beta(t,\theta,\gamma)\geq 0,\]
    for $Z_\beta$ defined as in \eqref{def: Z beta b(p)}.
\end{lemma}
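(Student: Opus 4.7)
The plan is to combine the viscosity supersolution property of $Z_\beta$ established in Lemma \ref{lemma: Z supersol b(p) W2} with a comparison argument in the spirit of Lemma \ref{smooth comparison with 0}. The function $\beta$ will be chosen so that simultaneously (i) $Z_\beta(0,\noise,\gamma)\geq 0$ for all $(\noise,\gamma)$, which by the strong monotonicity \eqref{Hyp: W0 b(p) W2} is automatic as soon as $\beta(0)\leq\alpha$, and (ii) the right-hand side of equation \eqref{eq: Z supersol b(p) W2} is pointwise non-negative. Once both hold, the maximum principle propagates non-negativity of $Z_\beta$ from the initial time through the whole of $[0,T)$.

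To establish (ii), one reads the integrals in \eqref{eq: Z supersol b(p) W2} as $L^2$ inner products with $(X,Y)\sim\gamma$: the joint monotonicity assumption \eqref{hyp: (F,G) b(p) W2} applied to $U=W^X$, $V=W^Y$ yields
\[\int(F^u-F^v)\cdot(W^u-W^v)\gamma(du,dv)+\int(G^u-G^v)\cdot(u-v)\gamma(du,dv)\geq\alpha M^2,\]
where $M^2$ denotes the minimum over $\lambda\in[0,1]$ appearing in the right-hand side of \eqref{hyp: (F,G) b(p) W2}. Using the $p$-Lipschitz continuity of $G$ (constant $L$ from Hypothesis \ref{hyp: lip W2 b(p)}) together with a telescoping decomposition at the optimal $\lambda^*$ gives $\int|G^u-G^v|^2\gamma\leq CM^2+CL^2\int|W^u-W^v|^2\gamma$. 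Applying Young's inequality to the cross term $-2\beta(t)\int(G^u-G^v)\cdot(W^u-W^v)\gamma$ with a parameter scaled linearly in $\beta(t)$ (rather than taken constant), the $M^2$ contributions cancel against $\alpha M^2$ and the RHS of \eqref{eq: Z supersol b(p) W2} is bounded from below by $-(cL\beta(t)+\beta'(t))\int|W^u-W^v|^2\gamma$, provided $\beta(t)$ remains below a fixed threshold of the form $\alpha L/\sqrt{c}$. Property (ii) thus reduces to the linear differential inequality $\beta'(t)\leq -cL\,\beta(t)$, which admits the strictly positive solution $\beta(t):=\min(\alpha,\alpha L/\sqrt{c})\,e^{-cLt}$ valid on all of $[0,+\infty)$.

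The final step is to run the comparison argument at the viscosity level, following closely the proof of Lemma \ref{smooth comparison with 0}. Arguing by contradiction and fixing $\delta>0$ together with $T'<T$, the penalized function $\tilde Z_\delta(t,\noise,\gamma):=Z_\beta(t,\noise,\gamma)+\delta e^{\Lambda t}(1+|\noise|^3+E_3^3(\gamma))+\delta/(T'-t)$ attains its infimum at an interior point $(t^*,\noise^*,\gamma^*)$ of a $\mathcal{W}_3$-compact set, by the very tightness and Prokhorov argument used in Lemma \ref{smooth comparison with 0}. The remark following Lemma \ref{lemma: Z supersol b(p) W2} permits taking the smooth penalization as a lower test function for $Z_\beta$ at $(t^*,\noise^*,\gamma^*)$; the resulting viscosity inequality, combined with the pointwise non-negativity of the RHS of \eqref{eq: Z supersol b(p) W2} and a suitably large $\Lambda$, produces a contradiction, so $\tilde Z_\delta\geq 0$. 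Letting $\delta\to 0$ and $T'\nearrow T$ yields the conclusion. The main difficulty lies in the calibration of Young's inequality: a parameter independent of $\beta$ produces a Riccati-type constraint on $\beta$ that is only compatible with strict positivity on a finite interval, so scaling the Young parameter proportionally to $\beta$ is essential to replace this Riccati by a benign linear ODE and keep $\beta$ strictly positive throughout $[0,T)$.
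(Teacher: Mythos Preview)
Your proof is correct and follows the same strategy as the paper: choose $\beta(t)=\beta_0 e^{-kt}$ with $\beta_0\leq\alpha$ and $k$ large enough so that both $Z_\beta|_{t=0}\geq 0$ and the right-hand side of \eqref{eq: Z supersol b(p) W2} are non-negative, then run the penalized comparison with $0$ from Lemma \ref{smooth comparison with 0} at the viscosity level (justified by the remark after Lemma \ref{lemma: Z supersol b(p) W2}). The only difference is in how the cross term is handled. The paper telescopes $(G^u-G^v)\cdot(W^u-W^v)$ \emph{directly} and obtains
\[
\int_{\reels^{2d}} (G^u-G^v)\cdot(W^u-W^v)\,\gamma(du,dv)\leq M^2+(2\|G\|_{Lip}+1)\int_{\reels^{2d}}|W^u-W^v|^2\,\gamma(du,dv)
\]
with a \emph{constant} Young parameter, which already yields the linear constraint $\beta'\leq-(1+4\|G\|_{Lip})\beta$. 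Your route---first bounding $\int|G^u-G^v|^2\gamma$ and then applying Young to the full cross term---is equally valid, but the ``Riccati difficulty'' you emphasize at the end is an artifact of this two-step organization rather than an intrinsic feature of the problem: the paper's direct bound never encounters it and needs no $\beta$-dependent parameter.
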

\begin{proof}
We first make the following observation: fixing $(t,\gamma,\theta)$, for any $\lambda \in[0,1]$
 \begin{gather*}
 \int_{\reels^{2d}} ( G^{u}-G^{v})\cdot (W^{u}-W^{v}) \gamma(du,dv)\\
 \leq \int_{\reels^{2d}} |G(u,\pi_d \gamma,\noise,\lambda W^{u}+(1-\lambda )W^{v})-G(v,\pi_d \gamma,\noise,\lambda W^{u}+(1-\lambda )W^{v})|^2m(du,dv)\\
 +(2\|G\|_{Lip}+1)\int_{\reels^{2d}}|W^{u}-W^{v}|^2\gamma(du,dv),
 \end{gather*}
 with the notation $W^u=W(t,u,\theta,\pi_d\gamma), W^v=W(t,v,\theta,\pi_{-d}\gamma)$ that we remind. 
 As a consequence, under Hypothesis \ref{hyp: (G,F,w0) b(p) W2}, for any $(t,\gamma,\noise)\in[0,T_c)\times\mathcal{P}_2(\reels^{2d})\times\reels^n$ the following holds
\[
\begin{array}{c}
\displaystyle\int_{\reels^{2d}} (F^{u}-F^{v})\cdot(W^{u}-W^{v})d\gamma+\int_{\reels^{2d}}(G^{u}-G^{v})\cdot(u-v)\gamma(du,dv)\\
\displaystyle-2\beta(t) \int_{\reels^{2d}} \langle G^{u}-G^{v},W^{u}-W^{v}\rangle \gamma(du,dv)-\frac{d\beta}{dt}\int_{\reels^{2d}} |W^{u}-W^{v}|^2\gamma(du,dv)\\
\geq 0,
\end{array}
\]
for  $\beta(t)=\beta_0e^{-k t}$ with $(\beta_0,k)$ chosen such that:
\[
\begin{array}{c}
     \beta_0\leq \alpha,\\
     k\geq 1+4\|G\|_{Lip}.
\end{array}
\]
For such a $\beta$, we recover using Lemma \ref{eq: Z supersol b(p) W2} that $Z_\beta$ is a viscosity supersolution of 
\begin{gather*}
\partial_t Z_\beta(t,\noise,m)+b(\noise)\cdot \nabla_\noise Z_\beta-\sigma_\noise \Delta_\noise Z_\beta\\
\nonumber+\int_{\reels^{2d}}\left(\begin{array}{c}
     F^{m_1}  \\
     F^{m_2}
\end{array}
\right)
\cdot D_\gamma Z_\beta(t,\noise,m,u,v) m(du,dv)-\sigma_x\int_{\reels^{2d}}\text{Tr}\left(BD_{(u,v)} D_mZ_\beta(t,\noise,m,u,v)\right)m(du,dv)\\
\geq 0,
\end{gather*}
with
\[Z_\beta|_{t=0}\geq 0.\]
Since $W$ is Lipschitz, $Z_\beta$ is a continuous function with at most quadratic growth. Fixing $t<T$ and using the same reasoning as in the proof of Lemma \ref{smooth comparison with 0}, this means the function
\[Z_\alpha: (s,\theta,\gamma)\to Z_\beta(s,\theta,\gamma)+\alpha e^{\kappa s}(1+E_3^3(\gamma)+|\theta|^3)+\frac{\alpha}{t-s},\]
reaches a minimum $(s^*,\theta^*,\gamma^*)$ with $s^*<t$, for any positive $\alpha,\kappa$. Fixing $\alpha>0$, we now assume by contradiction that there exists a point $(s,\theta,\gamma)$ with $s<t$ such that $Z_\alpha(s,\theta,\gamma)<0$ . By assumption \ref{hyp: (G,F,w0) b(p) W2}, this implies the minimum of $Z_\alpha$ is not reached for $s^*=0$. Using the viscosity property at this point of minimum, we find that there exists a constant $c$ depending on $t$ and the linear growth of $F,G,b,W$ such that 
\[0\geq \frac{\alpha}{(t-s^*)^2}+\alpha e^{\kappa s^*}(\kappa-c)(1+|\theta^*|^3+E_3^3(\gamma^*)).\]
For $\kappa>c$ we get a contradiction yielding 
\[\underset{[0,t)\times\reels^n\times \mathcal{P}_2(\reels^{2d})}{\inf} Z_\alpha \geq 0. \]
Since this is true for any $\alpha>0$ and $t<T$, the lemma is proved. 
\end{proof}
It now remains to show that the non negativity of $Z_\beta$ is a sufficiently strong property to get Lipschitz estimates on the solution $W$. We first remark that this gives directly an estimate on the Lipschitz norm in the state variable $x$.
\begin{lemma}
\label{lemma: estimate in x b(p) W2}
    Let $W:\reels^d\times  \mathcal{P}_2(\reels^d)\to \reels^d$ be a continuous function such that for some $\beta>0$ and for all $(X,Y)\in (L^2(\Omega,\reels^d))^2$ 
    \begin{equation}
    \label{eq: space estimate lip sol b(p)}
    \esp{(W(X,\mathcal{L}(X))-W(Y,\mathcal{L}(Y)))\cdot (X-Y)}\geq \beta \esp{|W(X,\mathcal{L}(X))-W(Y,\mathcal{L}(Y))|^2},\end{equation}
    then 
    \begin{enumerate}
    \item[-] $\forall (X,Y)\in (L^2(\Omega,\reels^d))^2 \quad \esp{|W(X,\mathcal{L}(X))-W(Y,\mathcal{L}(Y))|^2}\leq\frac{1}{\beta^2} \esp{|X-Y|^2}$
        \item[-] $\|W(\cdot)\|_{Lip(x)}:=\underset{\mu\in \mathcal{P}_2(\reels^d)}{\sup}\|W(\cdot,\mu))\|_{Lip} \leq \frac{1}{\beta}$
    \end{enumerate}
\end{lemma}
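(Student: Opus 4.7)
The plan is to deduce both items from the hypothesis \eqref{eq: space estimate lip sol b(p)} by a Cauchy--Schwarz inequality, and then pass from an $L^2$ estimate to a pointwise Lipschitz bound using Proposition \ref{prop: expectation to pointwise}.

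\textbf{Step 1: the $L^2$ estimate.} Starting from \eqref{eq: space estimate lip sol b(p)} and applying the Cauchy--Schwarz inequality in $L^2(\Omega)$ on the right-hand side, for any $(X,Y)\in (L^2(\Omega,\reels^d))^2$ I obtain
\[
\beta\,\esp{|W(X,\mathcal{L}(X))-W(Y,\mathcal{L}(Y))|^2}\leq \sqrt{\esp{|W(X,\mathcal{L}(X))-W(Y,\mathcal{L}(Y))|^2}}\,\sqrt{\esp{|X-Y|^2}}.
\]
Dividing by the square root of $\esp{|W(X,\mathcal{L}(X))-W(Y,\mathcal{L}(Y))|^2}$ (and treating the trivial case where this quantity vanishes separately) and squaring yields the first claim.

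\textbf{Step 2: from $L^2$ to pointwise Lipschitz.} Fix $\mu\in\mathcal{P}_2(\reels^d)$. Specialising Step 1 to random variables $X,Y$ that both have law $\mu$ gives
\[
\esp{|W(X,\mu)-W(Y,\mu)|^2}\leq \frac{1}{\beta^2}\,\esp{|X-Y|^2}.
\]
Consider the function
\[
f(\mu,x,y)=\frac{1}{\beta^2}|x-y|^2-|W(x,\mu)-W(y,\mu)|^2,
\]
which is continuous in $(\mu,x,y)$, symmetric in $(x,y)$, vanishes on the diagonal $x=y$, and satisfies $\esp{f(\mu,X,Y)}\geq 0$ whenever $X,Y\sim\mu$. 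Proposition \ref{prop: expectation to pointwise} then yields $f(\mu,x,y)\geq 0$ pointwise, that is $|W(x,\mu)-W(y,\mu)|\leq \frac{1}{\beta}|x-y|$. Taking the supremum over $x\neq y$ and then over $\mu\in\mathcal{P}_2(\reels^d)$ gives the announced bound on $\|W(\cdot)\|_{Lip(x)}$.

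There is no real obstacle here: the only subtle point is the passage from an $L^2$ inequality to a pointwise Lipschitz estimate, but this is exactly what Proposition \ref{prop: expectation to pointwise} is designed for, and the hypothesis \eqref{eq: space estimate lip sol b(p)} is strong enough to produce the required non-negative symmetric function vanishing on the diagonal.
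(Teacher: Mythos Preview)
Your proof is correct and follows essentially the same approach as the paper: Cauchy--Schwarz for the first item, then Proposition~\ref{prop: expectation to pointwise} for the second. The only cosmetic difference is that the paper applies Proposition~\ref{prop: expectation to pointwise} to $f(\mu,x,y)=(W(x,\mu)-W(y,\mu))\cdot(x-y)-\beta|W(x,\mu)-W(y,\mu)|^2$ (the hypothesis itself) and then uses Cauchy--Schwarz again pointwise, whereas you apply it to $f(\mu,x,y)=\frac{1}{\beta^2}|x-y|^2-|W(x,\mu)-W(y,\mu)|^2$ (the conclusion of Step~1), which gives the Lipschitz bound directly.
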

\begin{proof}
    The first part of the claim is just a direct consequence of applying Cauchy-Schwartz inequality in $(L^2(\Omega,\reels^d),\langle \cdot,\cdot\rangle|_{L^2})$ to \eqref{eq: space estimate lip sol b(p)}. As for the second part, we define 
    \[f(\mu,x,y)=(W(x,\mu)-W(y,\mu))\cdot(x-y)-\beta|W(x,\mu)-W(y,\mu)|^2.\]
As a direct consequence of the above inequality, for any $X,Y$ both of law $\mu$
\[\esp{f(\mu,X,Y)}\geq 0.\]
Let us also observe that $f$ is a continuous, symmetric function of $ (x,y)\in\reels^{2d}$ such that \[\forall \mu \in\mathcal{P}(\reels^d)\quad \forall x\in\reels^d \quad f(\mu,x,x)=0.\] 
We may now call upon Proposition \ref{prop: expectation to pointwise} to conclude that 
\[\forall \mu \in \mathcal{P}(\reels^d),\quad \forall (x,y)\in \reels^{2d} \quad f(\mu,x,y)\geq 0.\]
From the non-negativity of $f$ and an application of Cauchy-Schwartz inequality we finally conclude that
\[\|W(\cdot)\|_{Lip(x)}:=\underset{\mu\in \mathcal{P}_2(\reels^d)}{\sup}\|W(\cdot,\noise,\mu))\|_{Lip} \leq \frac{1}{\beta}.\]
\end{proof}
The non negativity of $Z_\beta$ for a Lipschitz solution defined on $[0,T)$ is equivalent to $\forall t<T, \theta\in \reels^n, (X,Y)\in (L^2(\Omega,\reels^d))^2, X\sim \mu, Y\sim \nu$
\[\esp{(W(t,X,\theta,\mu)-W(t,Y,\theta,\nu))\cdot(X-Y)}\geq \beta(t) \esp{|W(t,X,\theta,\mu)-W(t,Y,\theta,\nu)|^2}.\]
Fixing $(t,\theta)$, the above lemma can be applied to deduce an estimate on the Lipschitz norm of $W$ in $x$. To show that $W$ is also Lipschitz in $\mathcal{P}_2(\reels^d)$, the trick is to notice that the inequality implied by the first claim
\begin{equation}
\label{inequality: W Lipschitz in L2}
\forall (X,Y)\in (L^2(\Omega,\reels^d))^2 \quad  \esp{|W(t,X,\noise,\mathcal{L}(X))-W(t,Y,\noise,\mathcal{L}(Y))|^2}\leq \frac{1}{\beta^2(t)}\esp{|X-Y|^2},
\end{equation}
is sufficient to get a Lipschitz estimate in $\mathcal{W}_2$ for Lipschitz solutions. 
\begin{thm}
\label{thm: b(p) W2}
Under Hypothesis \ref{hyp: lip W2 b(p)} and Hypothesis \ref{hyp: (G,F,w0) b(p) W2}, there exists a unique Lipschitz solution to \eqref{eq: ME W2 Lip b(p)} on $[0,+\infty)$.
\end{thm}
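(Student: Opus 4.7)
The plan is to combine the local existence and blow-up criterion from Theorem~\ref{lip sol b(p)} with the propagation of $L^2$-monotonicity of Lemma~\ref{Z beta positive b(p)} in order to rule out any finite-time blow-up of the Lipschitz norm of $W$. Uniqueness is inherited directly from Theorem~\ref{lip sol b(p)}; the whole task is therefore to show that the maximal existence time $T_c$ equals $+\infty$, which by the blow-up alternative amounts to bounding $\|W(t,\cdot)\|_{Lip}$ on every bounded interval $[0,T]$.

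Under Hypothesis~\ref{hyp: lip W2 b(p)} a unique Lipschitz solution $W$ exists on $[0,T_c)$ for some $T_c>0$. By Lemma~\ref{Z beta positive b(p)}, on this interval one has $Z_\beta(t,\noise,\gamma)\geq 0$ with $\beta(t)=\beta_0 e^{-kt}>0$, which rewrites as
\[
\esp{(W(t,X,\noise,\mathcal{L}(X))-W(t,Y,\noise,\mathcal{L}(Y)))\cdot(X-Y)} \geq \beta(t)\,\esp{|W(t,X,\noise,\mathcal{L}(X))-W(t,Y,\noise,\mathcal{L}(Y))|^2}
\]
for every $(X,Y,\noise)\in (L^2(\Omega,\reels^d))^2\times\reels^n$. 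Fixing $(t,\noise)$ and applying Lemma~\ref{lemma: estimate in x b(p) W2} then simultaneously yields $\|W(t,\cdot,\noise,\mu)\|_{Lip(x)}\leq 1/\beta(t)$ uniformly in $(\noise,\mu)$ and the $L^2$ contraction~\eqref{inequality: W Lipschitz in L2}.

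For the $\noise$ and $\mathcal{W}_2$ directions, I would use the fixed-point representation of Lipschitz solutions. In the $\noise$ direction, since $b$ depends only on $\noise$ the noise process is autonomous and the bound $\|W(t,\cdot)\|_{Lip(x)}\leq 1/\beta(t)$ together with the Lipschitz constants of $F,G,b$ gives, via a standard Grönwall estimate on the characteristics driven by the same Brownian motions starting at $(x,\noise,\mu)$ and $(x,\noiseb,\mu)$, the bound $|W(t,x,\noise,\mu)-W(t,x,\noiseb,\mu)|\leq C(t)|\noise-\noiseb|$. For the $\mathcal{W}_2$ direction, I would compare two characteristic systems starting from $(x,\noise,\mu)$ and $(x,\noise,\nu)$ driven by the same Brownian motions, with initial data $X_0\sim\mu$, $Y_0\sim\nu$ coupled so as to realize $\mathcal{W}_2(\mu,\nu)$. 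Because $X_u\sim m_u$ and $Y_u\sim n_u$ along the characteristics, the contraction~\eqref{inequality: W Lipschitz in L2} applies at time $t-u$ to give
\[
\esp{|W(t-u,X_u,\noise_u,m_u)-W(t-u,Y_u,\noise_u,n_u)|^2}\leq \beta(t-u)^{-2}\esp{|X_u-Y_u|^2}.
\]
Plugging this into the integral expression for $X_s-Y_s$, using the Lipschitz character of $F$ and applying Grönwall's lemma yields $\esp{|X_s-Y_s|^2}\leq C(t)\mathcal{W}_2^2(\mu,\nu)$, which fed back into the fixed-point representation of $W$ delivers the pointwise estimate $|W(t,x,\noise,\mu)-W(t,x,\noise,\nu)|\leq C(t)\mathcal{W}_2(\mu,\nu)$.

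The delicate step is this last $\mathcal{W}_2$ estimate: the contraction~\eqref{inequality: W Lipschitz in L2} is only an inequality in expectation over couplings, and a priori does not imply a pointwise Lipschitz bound on $\mu\mapsto W(t,x,\noise,\mu)$. The key observation that makes the argument close is that along the characteristics generated by a Lipschitz solution, the marginals of $X_s,Y_s$ are exactly $m_s,n_s$, so~\eqref{inequality: W Lipschitz in L2} feeds without loss into the Grönwall loop. Combining the three Lipschitz estimates yields a uniform bound on $\|W(t,\cdot)\|_{Lip}$ on every $[0,T]$, contradicting the blow-up criterion of Theorem~\ref{lip sol b(p)}; hence $T_c=+\infty$.
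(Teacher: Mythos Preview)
Your overall strategy --- local existence plus blow-up criterion from Theorem~\ref{lip sol b(p)}, propagation of $L^2$-monotonicity via Lemma~\ref{Z beta positive b(p)}, then Lipschitz estimates in each variable --- is exactly the paper's approach, and the estimate in $x$ via Lemma~\ref{lemma: estimate in x b(p) W2} is correct.

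There is, however, a genuine gap in your $\mathcal{W}_2$ step. You correctly obtain $\esp{|X_s-Y_s|^2}\leq C(t)\,\mathcal{W}_2^2(\mu,\nu)$ for the McKean--Vlasov processes with random initial data $X_0\sim\mu$, $Y_0\sim\nu$; this yields stability of the measure flow, $\mathcal{W}_2(m_s,n_s)\leq C\,\mathcal{W}_2(\mu,\nu)$. But the fixed-point representation~\eqref{Def: Lipschitz solution b(p)} you then invoke for $W(t,x,\noise,\mu)-W(t,x,\noise,\nu)$ runs characteristics from a \emph{deterministic} point $x$: the right-hand side contains, through $F$ and $G$, terms of the form
\[
W(t-s,X_s^{x,\mu},\noise_s,m_s)-W(t-s,X_s^{x,\nu},\noise_s,n_s).
\]
Since $X_s^{x,\mu}$ does not have conditional law $m_s$, inequality~\eqref{inequality: W Lipschitz in L2} does not apply to this difference. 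Splitting it by the triangle inequality, the $x$-part is indeed controlled by $1/\beta(t-s)$, but the measure-part brings in $\|W(t-s,\cdot)\|_{Lip(\mathcal{W}_2)}\,\mathcal{W}_2(m_s,n_s)$, i.e.\ the very quantity you are trying to bound. The paper closes this circularity with a \emph{second} Gr\"onwall loop: the representation formula first yields
\[
|W(t,x,\noise,\mu)-W(t,x,\noise,\nu)|\leq C\Bigl(1+t+\int_0^t\|W(s,\cdot)\|_{Lip(\mathcal{W}_2)}\,ds\Bigr)\mathcal{W}_2(\mu,\nu),
\]
and then Gr\"onwall is applied to $t\mapsto\|W(t,\cdot)\|_{Lip(\mathcal{W}_2)}$. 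Your sentence ``fed back into the fixed-point representation of $W$ delivers the pointwise estimate'' skips this loop; without it the argument does not close. The same issue affects your $\noise$-estimate, since changing $\noise$ changes the measure flow through $F$, so $\mathcal{W}_2(m_s^{\noise},m_s^{\noiseb})$ appears and again feeds $\|W\|_{Lip(\mathcal{W}_2)}$ into the loop.
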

\begin{proof}
Local existence of a Lipschitz solution on $[0,T_c)$ for some $T_c>0$ follows from Theorem \ref{lip sol b(p)}. Thanks to Lemma \ref{Z beta positive b(p)} and \ref{lemma: estimate in x b(p) W2} we know there exists a function $\beta$ of the form $\beta(t)=\beta_0e^{-\kappa t}$ such that \[\forall t<T_c \quad \|W(t,\cdot)\|_{Lip(x)}\leq \frac{1}{\beta(t)},\]
and \eqref{inequality: W Lipschitz in L2} holds. We are now going to use this inequality to get a stability estimate with respect to initial conditions for the following stochastic Fokker-Planck equation
\[s\leq t<T_c, \quad  \left\{\begin{array}{l}
 d\mu_s^{\mu_0}=\left(-\text{div}\left(F(x,\noise_s,W(t-s,x,\noise_s,\mu_s^{\mu_0})\right)+\sigma_x \Delta \mu_s^{\mu_0}\right)ds \quad 
\mu_0\in\mathcal{P}_2(\reels^d),\\
d\noise_s=-b(\noise_s)ds+\sqrt{2\sigma_\noise}dB^\noise_s \quad \noise_0=\noise\in\reels^n.\\
\end{array}\right.\]
Since $t<T_c$, $W$ is Lipschitz in both the space and the measure variable and so existence of a strong solution to this system is not a problem. Consider two initial conditions $\mu_0,\nu_0\in\mathcal{P}_2(\reels^d)$ and let $\rho_0\in \Gamma(\mu_0,\nu_0)$ be chosen as an optimal coupling for $\mathcal{W}_2$. We define $(\rho_s)_{s\in[0,t]}$ by
\[ d\rho_s=\left(-\text{div}\left(\left(\begin{array}{c}F(x,\noise_s,\pi_d\rho_s,W(t-s,x,\noise_s,\pi_d\rho_s))\\ F(y,\noise_s,\pi_{-d}\rho_s,W(t-s,y,\noise_s,\pi_{-d}\rho_s)) \end{array}\right)\rho_s\right)+\sigma_x \text{Tr}\left(BD^2_{(x,y)}\rho_s\right)\right)ds.\]
By pathwise almost sure uniqueness \cite{probabilistic-mfg}, it holds that 
\[\forall s\leq t \quad \pi_d\rho_s=\mu_s^{\mu_0} \quad \pi_{-d} \rho_s= \mu_s^{\nu_0} \quad a.s. \]
Let us now fix $\omega\in \Omega$ up to a negligible set. Since $\rho_0\in\mathcal{P}_2(\reels^{2d})$ so does $\rho_s(\omega):=\rho^\omega_s$. Using the definition of a weak solution combined with the dominated convergence theorem we get that for any $s\leq t$
\begin{align*}
&\int_{\reels^{2d}} |x-y|^2 \rho^\omega_s(dx,dy)=\mathcal{W}^2_2(\mu_0,\nu_0)\\
&+\int_0^s\int_{\reels^{2d}}\left(F(x,\noise_u,\pi_d\rho^\omega_u,W(t-u,x,\noise_u\pi_d\rho^\omega_u))-F(y,\noise_u,\pi_{-d}\rho^\omega_u,W(t-u,\noise_u,\pi_{-d}\rho^\omega_u)\right)\cdot(x-y)\rho^\omega_u(dx,dy)du.
\end{align*}
Using \eqref{inequality: W Lipschitz in L2} we deduce that 
\[\int_{\reels^{2d}} |x-y|^2 \rho^\omega_s(dx,dy)\leq \mathcal{W}_2^2(\mu_0,\nu_0)+(1+\|F\|_{Lip}+\frac{1}{\beta^2(s)})\int_0^s\int_{\reels^{2d}} |x-y|^2 \rho^\omega_u(dx,dy)du.\]
Since this is true for almost every $\omega$ Grönwall's Lemma allows us to conclude that there exists a constant $C$
\[\forall s\leq t \quad \mathcal{W}_2(\mu^{\mu_0}_s,\mu^{\nu_0}_s)\leq C\mathcal{W}_2(\mu_0,\nu_0) \quad a.s\]
The strength of this estimate is that the constant C does not depend directly on the Lipschitz constant of $W$ in $\mathcal{W}_2$ but rather on the Lipschitz constant of its lift 
\[\tilde{W}: \left\{\begin{array}{c}
     L^2(\Omega,\reels^d)\to L^2(\Omega,\reels^d) \\
     X\mapsto W(t,X,\theta,\mathcal{L}(X))
\end{array}\right.
\] 
on $L^2(\Omega,\reels^d)$ which we know is bounded by $\frac{1}{\beta(t)}$ thanks to \eqref{inequality: W Lipschitz in L2}. As $W$ is given by the representation formula \eqref{Def: Lipschitz solution b(p)}, thanks to the above estimates we deduce that 
\[|W(t,x,\noise,\mu_0)-W(t,x,\noise,\nu_0)|\leq C\left(1+t+\int_0^t\|W(s,\cdot)\|_{Lip(\mathcal{W}_2)}ds\right)\mathcal{W}_2(\mu_0,\nu_0), \]
for a constant C depending on $\|F\|_{Lip},\|W_0\|_{Lip},\|G\|_{Lip}$ and $\|W(t,\cdot)\|_{Lip(x)}$.
Dividing by $\mathcal{W}_2(\mu_0,\nu_0)$ and taking the supremum for $\mu_0\neq \nu_0$, we apply Grönwall's Lemma to conclude that there exists another constant such that
\[\forall s\leq t \quad  \|W(s,\cdot)\|_{Lip(\mathcal{W}_2)}\leq C.\]
Let us remark that since $(\noise_s)_{s\geq 0}$ is an autonomous process with a Lipschitz driver, and $F,G,W_0$ are also Lipschitz in $\noise$, a similar argument gives estimates on $\|W(t,\cdot)\|_{Lip(\noise)}$

\quad

\noindent \textit{Conclusion}\\

Now that we have estimates on the Lipschitz constant of $W$ in $\reels^d\times\reels^n\times \mathcal{P}_2(\reels^d)$, we may conclude to the existence of a Lipschitz solution on any time interval. Let us assume by contradiction that $T_c<T$ for some $T>0$. Then 
\[\underset{t\to T_c}{\lim} \|W(t,\cdot)\|_{Lip}=+\infty\]
must hold. However, we have a uniform estimate on the Lipschitz semi-norm of $W$ in time that holds for any $t<T_c$ with a constant depending on $T,\|F\|_{Lip},\|b\|_{Lip},\|W_0\|_{Lip},\alpha$ only. Obviously, this contradicts the claim that $T_c<T$. Because it is true for any $T<\infty$, we conclude to the existence of a Lipschitz solution on any time interval. 
\end{proof}

\quad

\subsection{Comparison with previous results on MFG master equations}
As mentioned before, the above existence result applies to the MFG master equation \eqref{eq: ME W2 Lip} whenever $F=D_p H,G=-D_xH$ and $W_0=\nabla_x U_0$. However, the hypotheses we made are different from the conditions found in the literature on displacement monotone mean field games \cite{disp-monotone-1,disp-monotone-2,globalwellposednessdisplacementmonotone}. Forgetting the common noise $\noise$, the usual assumptions are as follows
 \begin{hyp}
 \label{hyp: disp monotone (F,G,W0)}
     $\forall (X,Y,U,V)\in\reels^n\times \left(L^2(\Omega,\reels^d)\right)^4$, 
\begin{equation*}
    \begin{array}{c}
         \esp{\left(W_0(X,\mathcal{L}(X)-W_0(Y,\mathcal{L}(Y))\right)\cdot(X-Y)}\geq 0,  \\
    \esp{F(X,\mathcal{L}(X),U)-F(Y,\mathcal{L}(Y),V)\cdot(U-V)+(G(X,\mathcal{L}(X),U)-G(Y,\mathcal{L}(Y),V))\cdot(X-Y)}\geq 0,
    \end{array}
\end{equation*}
\begin{equation}
\label{strong monotonicity F disp monotone}
\exists \alpha >0, \quad \forall(x,p,q,m)\in\reels^{3d}\times \mathcal{P}_2(\reels^d) \quad (F(x,m,p)-F(x,m,q))\cdot(p-q)\geq  \alpha |p-q|^2 
\end{equation}
 \end{hyp}
Obviously, Hypothesis \ref{hyp: (G,F,w0) b(p) W2} and Hypothesis \ref{hyp: disp monotone (F,G,W0)} are in dichotomy. This is mainly due to two differences, the first one is that we did not assume that the functions $(F,G,W_0)$ were gradients, to account for mean field forward backward differential equations. This has its importance as the following holds:
\begin{lemma}
    Let $x\mapsto U(x)$ be a $C^1$ function with a Lipschitz continuous gradient, the following two propositions are equivalent
    \begin{enumerate}
        \item[(i)]$\forall (x,y)\in (\reels^d)^2$ \[(\nabla U(x)-\nabla U(y))\cdot(x-y)\geq 0.\]
        \item[(ii)]$\exists \alpha>0, \quad \forall (x,y)\in (\reels^d)^2$ \[(\nabla U(x)-\nabla U(y))\cdot(x-y)\geq \alpha |\nabla_x U(x)-\nabla U(y)|^2.\]
    \end{enumerate}
    \begin{proof}
        $(ii)\implies (i)$ is self-evident. As for the other implication, since $(i)$ implies that $x\mapsto U(x)$ is a convex function with a Lipschitz gradient, it is a classic result from convex analysis \cite{Convex_book} that 
        \[\forall (x,y)\in (\reels^d)^2 \quad (\nabla U(x)-\nabla U(y))\cdot(x-y)\geq \frac{1}{L}|\nabla U(x)-\nabla U(y)|^2,\]
        where $L=\|\nabla U\|_{Lip}$.
    \end{proof}
\end{lemma}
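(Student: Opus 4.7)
The plan is to observe that $(ii)\Rightarrow(i)$ is immediate since the right-hand side of $(ii)$ is nonnegative. The content lies in $(i)\Rightarrow(ii)$, which is the Baillon--Haddad co-coercivity theorem for convex $C^1$ functions with Lipschitz gradient. The argument rests on two classical ingredients. First, $(i)$ says that $\nabla U$ is monotone, which by restriction to any affine line reduces to the one-dimensional criterion for convexity; hence $U$ is convex. Second, the $L$-Lipschitz continuity of $\nabla U$ with $L=\|\nabla U\|_{Lip}$ yields the descent inequality
\[U(z)\leq U(w)+\nabla U(w)\cdot(z-w)+\frac{L}{2}|z-w|^2\quad\forall\,z,w\in\reels^d,\]
obtained by applying the fundamental theorem of calculus to $t\mapsto U(w+t(z-w))$ and using $|\nabla U(w+t(z-w))-\nabla U(w)|\leq Lt|z-w|$.

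To execute the proof I would fix $(x,y)\in(\reels^d)^2$ and introduce the auxiliary function $\phi(z)=U(z)-\nabla U(x)\cdot z$. By construction $\phi$ is convex with $L$-Lipschitz gradient, and $\nabla\phi(x)=\nabla U(x)-\nabla U(x)=0$, so $x$ is a global minimizer of $\phi$. Applying the descent inequality to $\phi$ at the point $y-\tfrac{1}{L}\nabla\phi(y)$ (one step of gradient descent from $y$) gives
\[\phi(x)\leq\phi\!\left(y-\tfrac{1}{L}\nabla\phi(y)\right)\leq\phi(y)-\frac{1}{2L}|\nabla\phi(y)|^2.\]
Unfolding the definition of $\phi$ and using $\nabla\phi(y)=\nabla U(y)-\nabla U(x)$, this rearranges into
\[U(y)-U(x)-\nabla U(x)\cdot(y-x)\geq\frac{1}{2L}|\nabla U(y)-\nabla U(x)|^2.\]
Swapping the roles of $x$ and $y$ and repeating the argument with $\psi(z)=U(z)-\nabla U(y)\cdot z$ yields the symmetric inequality
\[U(x)-U(y)-\nabla U(y)\cdot(x-y)\geq\frac{1}{2L}|\nabla U(x)-\nabla U(y)|^2,\]
and summing the two produces
\[(\nabla U(y)-\nabla U(x))\cdot(y-x)\geq\frac{1}{L}|\nabla U(y)-\nabla U(x)|^2,\]
so $\alpha=1/L$ works.

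I do not expect any genuine obstacle here: both ingredients (monotonicity of $\nabla U$ being equivalent to convexity of $U$, and the descent lemma coming from the Lipschitz gradient) are elementary facts from one-variable calculus and convex analysis. The only real idea is the affine shift $\phi(z)=U(z)-\nabla U(x)\cdot z$, used to convert an arbitrary endpoint $x$ into a minimizer, so that the descent inequality produces a lower bound involving $|\nabla U(y)-\nabla U(x)|^2$ rather than the less useful $|y-x|^2$. An alternative route would be to quote Baillon--Haddad directly, but unwinding the two-line shift-and-descent argument above makes the proof self-contained and keeps the explicit constant $\alpha=1/L$ visible.
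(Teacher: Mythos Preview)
Your proof is correct and follows the same overall strategy as the paper: both observe that $(ii)\Rightarrow(i)$ is trivial and that $(i)\Rightarrow(ii)$ is the co-coercivity (Baillon--Haddad) inequality for convex functions with $L$-Lipschitz gradient, yielding $\alpha=1/L$. The only difference is that the paper simply cites this as a classical result from convex analysis, whereas you supply a self-contained proof via the descent lemma and the affine shift $\phi(z)=U(z)-\nabla U(x)\cdot z$; this adds detail but does not change the argument.
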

The other point being that the strategy of proof is slightly different in \cite{disp-monotone-1,disp-monotone-2}. The strong joint $L^2-$monotonicity Hypothesis \ref{hyp: (G,F,w0) b(p) W2} gives us directly an estimate on the Lipschitz norm of $W$ in $\mathcal{W}_2$ while in their approach it is obtained by a weaker monotonicity assumption combined with strong convexity of the Hamiltonian in the non-linearity. A reasonable question is whether we can present a long time existence result for Lipschitz solutions to \eqref{eq: ME W2 Lip b(p)} that resolves to previous results given in the displacement monotone framework whenever $(F,G,W_0)$ are gradients. 
\begin{thm}
    Under Hypothesis \eqref{hyp: monotonie not strong b(p)}, if $\forall (x,y,p,q,\noise,\mu)\in(\reels^d)^4\times \reels^n\times \mathcal{P}_2(\reels^d)$
\begin{equation}
\label{storng monotonicity in p}
F(x,\noise,\mu,p)-F(y,\noise,\mu,q))\cdot (p-q)+(G(x,\noise,\mu,p)-G(y,\noise,\mu,q))\cdot(x-y)\geq \alpha_H|p-q|^2,\end{equation}
Then there exists a unique Lipschitz solution to \eqref{eq: ME W2 Lip b(p)} on $[0,+\infty)$.
\end{thm}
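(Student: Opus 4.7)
The plan is to follow the strategy of Theorem \ref{thm: b(p) W2}, where the Lipschitz estimate on $W$ was extracted by showing non-negativity of the auxiliary function $Z_\beta$ of \eqref{def: Z beta b(p)}. Local existence on some $[0,T_c)$ together with the blow-up criterion come from Theorem \ref{lip sol b(p)}, so the task reduces to producing a uniform-in-time Lipschitz bound on $W$. A first step is to verify that the weak joint $L^2$-monotonicity of Hypothesis \ref{hyp: monotonie not strong b(p)} propagates: reusing Lemma \ref{lemma: Z supersol b(p) W2} with $\beta \equiv 0$, the function $Z := Z_0$ is a viscosity supersolution of an equation whose source is exactly $\int [(F^u-F^v)\cdot(W^u-W^v) + (G^u-G^v)\cdot(u-v)]\,d\gamma$, which is non-negative by Hypothesis \ref{hyp: monotonie not strong b(p)}. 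Together with $Z(0,\cdot)\geq 0$ from $L^2$-monotonicity of $W_0$, the viscosity comparison argument of Lemma \ref{smooth comparison with 0} yields $Z\geq 0$ on $[0,T_c)$; this propagates $L^2$-monotonicity but is not yet a Lipschitz estimate.

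For the Lipschitz bound I would then pick $\beta(t) = \beta_0 e^{-\kappa t}$ for small $\beta_0$ and large $\kappa$, and try to establish $Z_\beta \geq 0$. The supersolution source from Lemma \ref{lemma: Z supersol b(p) W2} reads
\[
S_\beta = \int\bigl[(F^u-F^v)(W^u-W^v) + (G^u-G^v)(u-v)\bigr]d\gamma - 2\beta\int(G^u-G^v)(W^u-W^v)d\gamma - \beta'\int|W^u-W^v|^2 d\gamma.
\]
Decomposing $F^u-F^v = (F(u,\mu,W^u)-F(v,\mu,W^v)) + (F(v,\mu,W^v)-F(v,\nu,W^v))$ with $\mu=\pi_d\gamma,\nu=\pi_{-d}\gamma$, and analogously for $G$, the first integral splits into a same-measure contribution, on which the new pointwise strong monotonicity \eqref{storng monotonicity in p} gives the lower bound $\alpha_H\int|W^u-W^v|^2 d\gamma$, plus a measure-discrepancy residual controlled by the $\mathcal{W}_2$-Lipschitz regularity of $F,G$. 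Tuning $\beta_0,\kappa$ against $\|G\|_{Lip}$ and $\alpha_H$, the cross-term $-2\beta(G^u-G^v)(W^u-W^v)$ should then be absorbed into the $\alpha_H|W^u-W^v|^2$ contribution via Young's inequality, and the viscosity comparison argument used in Lemma \ref{Z beta positive b(p)} would then give $Z_\beta \geq 0$.

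The hard part will be the extra $\int|u-v|^2 d\gamma$ contribution generated both by Young's absorption of $-2\beta(G^u-G^v)(W^u-W^v)$ and by the measure-discrepancy residual, which has no direct positive counterpart in $S_\beta$ since \eqref{storng monotonicity in p} only bounds quadratically in $p$, not in $x$. I expect to resolve it by leveraging the already-propagated inequality $Z \geq 0$ --- possibly by adding a small multiple of $Z$ as an extra regularizer in the auxiliary function --- or by first establishing the bound on couplings of a measure with itself, a subset which the underlying SPDE leaves invariant, and extending by continuity in the $\mathcal{W}_2$ argument. Once $Z_\beta\geq 0$ is obtained, Lemma \ref{lemma: estimate in x b(p) W2} yields the Lipschitz bound of $W$ in $x$ and, via the lift to $L^2(\Omega,\reels^d)$, in $\mathcal{W}_2$; the $\noise$-estimate follows from Grönwall with the autonomous Lipschitz drift, and the blow-up criterion of Theorem \ref{lip sol b(p)} rules out finite-time singularity, giving global existence on $[0,+\infty)$.
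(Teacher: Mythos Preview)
Your approach via $Z_\beta$ differs from the paper's and runs into a real obstruction that your speculative fixes do not resolve. The residual $\int|u-v|^2\,d\gamma$ you flag is genuine: condition \eqref{storng monotonicity in p} is pointwise for a \emph{fixed} measure $\mu$, so after your decomposition the measure-discrepancy terms in $(F,G)$ and the Young absorption of $-2\beta\int(G^u-G^v)(W^u-W^v)\,d\gamma$ both produce negative contributions of order $\int|u-v|^2\,d\gamma$ with no positive counterpart anywhere in $S_\beta$. Adding a multiple of $Z$ does not help, since $Z$ carries no quadratic-in-$(u-v)$ lower bound. Your fix (b) --- restricting to couplings with $\pi_d\gamma=\pi_{-d}\gamma$ --- does give $Z_\beta\geq 0$ on that invariant set and hence $\|W(t,\cdot)\|_{Lip(x)}\leq 1/\beta(t)$ via Lemma \ref{lemma: estimate in x b(p) W2}; but ``extending by continuity'' is not a quantitative bound, and once off the diagonal the same $\|u-v\|^2$ residual reappears, so $Z_\beta\geq 0$ cannot be recovered for general $\gamma$ this way.

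The paper avoids this by decoupling the two estimates rather than extracting both from $Z_\beta$. First, the bound on $\|W\|_{Lip(x)}$ is obtained directly from the characteristics with the \emph{same} $(\noise,\mu)$: since the processes $(\noise_s,\mu_s)$ then coincide, condition \eqref{storng monotonicity in p} applies along the trajectories with no measure discrepancy, and the argument is the classical one from \cite{Lions-college,noise-add-variable} (equivalent in content to your fix (b)). Second, with $\|W\|_{Lip(x)}$ in hand, the $\mathcal{W}_2$-Lipschitz bound is obtained not from $Z_\beta$ but by adapting Proposition 3.4 of \cite{globalwellposednessdisplacementmonotone}: the propagated weak $L^2$-monotonicity (your $Z\geq 0$ step) combined with the strong convexity in $p$ yields an integrated $L^2$ control of $W(\cdot,\mu_s)-W(\cdot,\nu_s)$ along characteristics, in the spirit of Lemma \ref{lemma: monotonicity estimate in wasserstein 1 b(p)} transposed to $L^2$, which feeds into a Gr\"onwall estimate on $\mathcal{W}_2(\mu_s,\nu_s)$ and closes the bound on $\|W\|_{Lip(\mathcal{W}_2)}$. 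The $\noise$-estimate then follows as you say. So your first step ($Z\geq 0$) and your fix (b) are both on track, but the missing ingredient is a separate argument for the $\mathcal{W}_2$-bound, which the $Z_\beta$ machinery alone does not supply under the present hypotheses.
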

\begin{proof}
The fact that under \eqref{storng monotonicity in p} we can estimate the Lipschitz norm of $W$ in $x$ by coming back to the characteristics of $W$ is well known \cite{Lions-college,noise-add-variable}. Once the Lipschitz norm in $x$ is bounded, an estimate in $\mathcal{W}_2$ follows from adapting Proposition 3.4 of \cite{globalwellposednessdisplacementmonotone} to Lipschitz solutions.
\end{proof}

We now show that whenever $(F,G)$ are gradients \eqref{storng monotonicity in p} holds under Hypothesis \ref{hyp: disp monotone (F,G,W0)}.
\begin{lemma}
    Let $F,G$ be continuous functions such that 
    $\forall (x,y,p,q)\in (\reels^d)^4$ 
    \begin{equation}
    \label{eq: (F,G) monotone+ grad = strong monotone}
    (F(x,p)-F(y,q))\cdot (p-q)+(G(x,p)-G(y,q))\cdot(x-y)\geq 0.
    \end{equation}
    If there exists a function $H:(\reels^d)^2\to \reels$ such that $F=D_p H,G=-D_x H$ and $H$ is $\alpha_H-$strongly convex in $p$ uniformly in $x$, then 
    $\forall (x,y,p,q)\in (\reels^d)^4$ 
    \begin{gather*}
    (F(x,p)-F(y,q))\cdot (p-q)+(G(x,p)-G(y,q))\cdot(x-y)\\
    \geq \alpha_H|p-q|^2.
    \end{gather*}
\end{lemma}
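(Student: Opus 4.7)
The plan is to exploit the gradient structure $F=D_pH$, $G=-D_xH$ to rewrite the quantity
\[
\Phi(x,y,p,q):=(F(x,p)-F(y,q))\cdot(p-q)+(G(x,p)-G(y,q))\cdot(x-y)
\]
as a one-dimensional integral of quadratic forms in the Hessian of $H$ along the segment joining $(y,q)$ to $(x,p)$. The desired lower bound will then follow by combining the strong convexity of $H$ in $p$ with a concavity property of $H$ in $x$ that is silently encoded in the joint monotonicity hypothesis itself.

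I parametrize the segment by $x_t:=y+t(x-y)$, $p_t:=q+t(p-q)$ for $t\in[0,1]$ and consider
\[
A(t):=F(x_t,p_t)\cdot(p-q)+G(x_t,p_t)\cdot(x-y),
\]
so that $\Phi(x,y,p,q)=A(1)-A(0)=\int_0^1 A'(t)\,dt$. Assuming for the moment $C^2$ regularity of $H$, a direct chain-rule computation of $A'(t)$ produces four terms; two of them are the diagonal contributions $D^2_{pp}H(x_t,p_t)(p-q)\cdot(p-q)$ and $-D^2_{xx}H(x_t,p_t)(x-y)\cdot(x-y)$, while the two remaining cross-contributions involving the mixed Hessian cancel identically by the Schwarz symmetry of $\nabla^2H$. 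This yields the key identity
\[
\Phi(x,y,p,q)=\int_0^1\!\left[D^2_{pp}H(x_t,p_t)(p-q)\cdot(p-q)-D^2_{xx}H(x_t,p_t)(x-y)\cdot(x-y)\right]dt.
\]

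To extract concavity of $H$ in $x$, I apply the monotonicity hypothesis \eqref{eq: (F,G) monotone+ grad = strong monotone} with the choice $q=p$: this gives $(G(x,p)-G(y,p))\cdot(x-y)\geq 0$ for every $x,y,p$, which says that $-D_xH(\cdot,p)$ is monotone, equivalently $H(\cdot,p)$ is concave, hence $D^2_{xx}H\leq 0$. Combining this with the standing hypothesis $D^2_{pp}H\geq \alpha_H I$ shows that the integrand in the identity above is pointwise bounded below by $\alpha_H|p-q|^2$, and integration over $t\in[0,1]$ delivers the conclusion $\Phi(x,y,p,q)\geq\alpha_H|p-q|^2$.

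The key obstacle is less about the monotonicity-to-strong-monotonicity step (which reduces to a one-line integral inequality once concavity in $x$ is noted) and more about the regularity of $H$: the statement only provides continuity of $F$ and $G$, so the pointwise Hessian computations are formal. This is handled by a standard convolution regularization $H^\varepsilon:=H\star\rho_\varepsilon$; the mollified pair $(F^\varepsilon,G^\varepsilon)=(D_pH^\varepsilon,-D_xH^\varepsilon)$ inherits the joint monotonicity of $(F,G)$ by Fubini, $H^\varepsilon$ remains $\alpha_H$-strongly convex in $p$, so the smooth argument applies to $(F^\varepsilon,G^\varepsilon)$, and the conclusion for $(F,G)$ follows by uniform convergence on compact sets as $\varepsilon\to 0$.
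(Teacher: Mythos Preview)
Your proof is correct and follows essentially the same route as the paper's: both write the quantity as an integral of the block Hessian along the segment, observe that the mixed $D^2_{xp}H$ terms cancel by symmetry, and then mollify to handle the merely $C^1$ case. The only difference is expository---you make explicit that concavity of $H$ in $x$ (needed to drop the $-D^2_{xx}H$ term) follows from the joint monotonicity hypothesis with $q=p$, whereas the paper passes over this point silently.
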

\begin{proof}
    We first assume that $H$ is $C^2$. 
    \begin{align*}
    &(F(x,p)-F(y,q))\cdot (p-q)+(G(x,p)-G(y,q))\cdot(x-y)\\
&=\int_0^1 \left(\begin{array}{c}
         p-q  \\
         x-y 
    \end{array}\right)
    \cdot \left(\begin{array}{cc}
         D^2_p H & D^2_{xp} H  \\
         -D^2_{xp} H & -D^2_x H
    \end{array}\right)((1-t)x+ty,(1-t)p+tq)\cdot\left(\begin{array}{c}
         p-q  \\
         x-y 
    \end{array}\right) dt,\\
   &=\int_0^1 \left(\begin{array}{c}
         p-q  \\
         x-y 
    \end{array}\right)
    \cdot \left(\begin{array}{cc}
         D^2_p H & 0_{\mathcal{M}_d(\reels)}  \\
         0_{\mathcal{M}_d(\reels)} & -D^2_x H
    \end{array}\right)((1-t)x+ty,(1-t)p+tq)\cdot\left(\begin{array}{c}
         p-q  \\
         x-y 
    \end{array}\right) dt,\\
    &\geq \alpha_H |p-q|^2
    \end{align*}
    Whenever $H$ is only $C^1$, we consider a sequence $H_\varepsilon$ obtained by a monotonicity preserving regularization. This can be done by convolution with a smooth, compactly supported, positive kernel of mass 1. Since $H_\varepsilon$ satisfies \eqref{eq: (F,G) monotone+ grad = strong monotone}, is $\alpha_H-$strongly convex in $p$ and converges to $H$ in $C^1$, the result is obtained by taking the limit. 
\end{proof}
\begin{remarque}
    In fact, this shows that assuming $F,G$ are the gradient of an Hamiltonian is a very constraining assumption. In general, it is not sufficient for $G$ to be monotone in $x$ and $F$ in $p$ to get joint monotonicity which is much stronger. However, in the gradient case this becomes a sufficient assumption as the cross derivatives cancel out. 
\end{remarque}
\begin{remarque}
It might appear surprising that under strong monotonicity in $p$, we can show the existence of solutions under weaker assumptions on the monotonicity of $W_0$. It was already observed in \cite{Lions-college} that strong monotonicity in $p$ has a greater regularizing effect on the equation and allows starting from less regular initial data.
\end{remarque}
\subsection{Noise depending on the distribution of players}
\label{subsection: b(p,m) W2}
We now turn back to the original problem of \eqref{eq: ME W2 Lip}
\begin{equation*}
\left\{
\begin{array}{c}
\displaystyle\partial_t W+F(x,\noise,m,W)\cdot \nabla_x W+b[W](t,\noise,m)\cdot\nabla_\noise W-\sigma_x \Delta_x W-\sigma_\noise \Delta_\noise W\\
\displaystyle +\int_{\reels^d} F(y,\noise,m,W)\cdot D_m W(t,x,\noise,m)(y)m(dy)-\sigma_x\int_{\reels^d} \text{div}_y (D_mW(t,x,\noise,m)(y))m(dy)\\
\displaystyle =G(x,\noise,m,W) \text{ in } (0,T)\times\reels^d\times\reels^n\times\mathcal{P}_2(\reels^d),\\
\displaystyle W(0,x,\noise,m)=W_0(x,\noise,m) \text{ for } (x,\noise,m)\in\reels^d\times\reels^n\times\mathcal{P}_2(\reels^d).
 \end{array}
 \right.
\end{equation*}
We now make a precise statement on the regularity of $b$ with respect to its functionnal argument
\begin{hyp}
\label{hyp: Lipschitz W2}
$\exists C>0 \quad \forall (x,y)\in(\reels^d)^2, (\noise,\noiseb)\in(\reels^n)^2, (\mu,\nu)\in(\mathcal{P}_2(\reels^d))^2, (u,v)\in(\reels^d)^2,$
    \begin{enumerate}
    \item[-] $|W_0(x,\noise,\mu)-W_0(y,\noiseb,\nu)|\leq C\left(|x-y|+|\noise-\noiseb|+\mathcal{W}_2(\mu,\nu)\right).$
        \item[-] $|F(x,\noise,\mu,u)-F(y,\noiseb,\nu,v)|+|G(x,\noise,\mu,u)-G(y,\noiseb,\nu,v)|\leq C\left(|x-y|+|\noise-\noiseb|+|u-v|+\mathcal{W}_2(\mu,\nu)\right).$
        \item[-]For any two continuous functions $f,g: \reels^d\to \reels^d$ and any coupling $\gamma\in \Gamma(\mu,\nu)$
        \[|b(\noise,\mu,f(\cdot))-b(\noiseb,\nu,g(\cdot))|\leq C\left(|\noise-\noiseb|+\mathcal{W}_2(\mu,\nu)+\sqrt{\int_{(\reels^d)^2}|f(x)-g(y)|^2\gamma(dx,dy)}\right).\]
    \end{enumerate}
\end{hyp}
Obviously, this restricts quite a bit how $b$ may depend on $W$ in general. From a probabilistic point of view, it is natural to assume that the dependency of $b$ on $W$ be of the form 
\[b(\theta,\mu,W(t,\cdot,\theta,\mu))=\tilde{b}(\theta,\mu, W(t,\cdot,\theta,\mu)_\# \mu).\]
If $\tilde{b}$ is Lipschitz for $\mathcal{W}_2$ in its last variable, then Hypothesis \ref{hyp: Lipschitz W2} holds. We do not require $b$ to be of this form and keep only this continuity assumption.
\begin{exemple}
Hypothesis \ref{hyp: Lipschitz W2} holds for coefficient $b$ of the form
\[b(\noise,\mu,W(t,\cdot,\noise,\mu))=\tilde{b}\left( \noise,\mu,\displaystyle\int_{\reels^d}h(y,\noise,\mu,W(t,y,\noise,\mu))\mu(dy)\right),\]
whenever $\tilde{b}$ and $h$ are Lipschitz functions in all variable. In particular, if we come back to the original problem this allows to take a $b$ that depends on the average control of the players 
\[\int_{\reels^d}\nabla_pH(y,\noise,\mu,\nabla_x U(t,y,\noise,\mu))\mu(dy).\]
\end{exemple}
\begin{remarque}
    Results obtained in this section still hold if $b$ depends on $x$ so long as it is in a Lipschitz fashion. We do not treat directly this case since this changes completely the interpretation of the noise process $\theta$ and such situation does not correspond to any mean field game no matter the coefficients. Nevertheless, it is a natural extension for mean field forward-backward systems. 
\end{remarque}
\subsubsection{Existence of solutions}
Just as the finite dimensional counterpart of this equation, some results were already given on the associated mean field forward backward stochastic differential equation under $G-$monotonicity \cite{mean-field_G-monotonicity,G-monotonicity} whenever $\sigma_x=0$. However, we here also treat situations in which $\sigma_x>0$, which raises some issues and does not allow us to use directly those results. We give new existence results for this equation, in dichotomy with G-monotonicity, in particular those results may be of interest for mean field forward backward stochastic differential equations outside mean field games theory. They consist in an extension to nonlinear systems of transport equations on the space of measures of a method we first developed in \cite{noise-add-variable} for nonlinear systems of transport equations. Whenever $b$ depends on $m$ (whether it be directly or through $W$), the evolution of the distribution of players is coupled with the noise process and its dynamic cannot be considered independently. This raise the question of what kind of monotonicity is natural in this case. If the solution $W$ of \eqref{eq: ME W2 Lip} were to take value in $\reels^{n+d}$ instead of $\reels^d$, then at least formally by rewriting the equation in function a new state variable $\tilde{x}=(x,\theta)$ we expect to fall back in the case of \eqref{eq: ME W2 Lip b(p)} that we treated. For mean field games we know that this is not true, however it is always possible to find a function $(t,x,\theta,\mu)\mapsto V(t,x,\theta,\mu)\in\reels^n$ satisfying 
\begin{equation}
\label{def: V for b(m,p)}
\left\{
\begin{array}{c}
\displaystyle\partial_t V+F(x,\noise,m,W)\cdot \nabla_x V+b[W](t,\noise,m)\cdot\nabla_\noise V-\sigma_x \Delta_x V-\sigma_\noise \Delta_\noise V\\
\displaystyle +\int_{\reels^d} F(y,\noise,m,W)\cdot D_m V(t,x,\noise,m)(y)m(dy)-\sigma_x\int_{\reels^d} \text{div}_y (D_mV(t,x,\noise,m)(y))m(dy)\\
\displaystyle =G_V(x,\noise,m,W,V) \text{ in } (0,T)\times\reels^d\times\reels^n\times\mathcal{P}_2(\reels^d),\\
\displaystyle V(0,x,\noise,m)=V_0(x,\noise,m) \text{ for } (x,\noise,m)\in\reels^d\times\reels^n\times\mathcal{P}_2(\reels^d),
 \end{array}
 \right.
\end{equation}
for some coefficients $(V_0,G_V)$, so as to complete in $\reels^{d+n}$ the equation satisfied by $W$ in $\reels^d$. The remaining question is to know under which condition does there exist a $V$ satisfying \eqref{def: V for b(m,p)} and such that the couple $(W,V)$ is jointly $L^2-$monotone in the sense that $\forall (X,Y)\in (L^2(\Omega,\reels^d))^2, (\noise,\noiseb)\in (\reels^n)^2$
\[\esp{(W(t,X,\theta,\mathcal{L}(X))-W(t,Y,\noiseb,\mathcal{L}(Y)))\cdot(X-Y)+(V(t,X,\noise,\mathcal{L}(X))-V(t,Y,\noiseb,\mathcal{L}(Y)))\cdot(\noise-\noiseb)}\geq 0.\]
Whenever such $V$ can be constructed, we expect some regularity can be obtained on solutions by working as we did in the autonomous case but on the couple $(W,V)$ instead of just $W$. This gives a different interpretation to the completion argument we presented in the previous section. We first focus on the simplest possible choice of such function $V\equiv Ap$ for some positive matrix $A$, as we did in section \ref{section LL monotone solutions}. Expending on the idea we presented on the autonomous case, we define
\begin{gather}
\nonumber Z^A_\beta(t,\gamma,\noise,\noiseb)=\\
\nonumber
\frac{1}{2}(\noise-\noiseb)\cdot A\cdot (\noise-\noiseb)
\nonumber +\int_{\reels^{2d}}\left((W(t,u,\noise,\pi_d\gamma)-W(t,v,\noiseb,\pi_{-d}\gamma))\cdot(u-v)\right)\gamma(du,dv)\\
\label{def: Z beta b(p,m)}
-\beta(t)\int_{\reels^{2d}}|W(t,u,\noise,\pi_d\gamma)-W(t,v,\noiseb,\pi_{-d}\gamma)|^2 \gamma(du,dv)
 \end{gather}
We observe that this new function also satisfy an equation in the viscosity sense.
\begin{lemma}
\label{lemma: Z supersol b(p,m) W2}
Consider $W$ a Lipschitz solution on $[0,T)$ for some $T>0$ and  let $Z^A_\beta$ be defined as in \eqref{def: Z beta b(p,m)} for some $A\in \mathcal{S}_n(\reels)$ and a $C^1$ function of time $\beta$. Then $Z^A_\beta$ is a viscosity supersolution of 
\begin{gather}
\label{eq: Z supersol b(p,m) W2}
\nonumber \partial_t Z^A_\beta(t,\noise,\noiseb,m)+b^{\noise}\cdot \nabla_\noise Z^A_\beta+b^{\noiseb}\cdot \nabla_{\noiseb} Z^A_\beta-\sigma_\noise \text{Tr}\left(B_nD^2_{(\noise,\noiseb)} Z^A_\beta\right)\\
\nonumber+\int_{\reels^{2d}}\left(\begin{array}{c}
     F^{u}  \\
     F^{v}
\end{array}
\right)
\cdot D_m Z^A_\beta(t,\noise,\noiseb,m,u,v) m(du,dv)-\sigma_x\int_{\reels^{2d}}\text{Tr}\left(B_d D_{(u,v)} D_mZ^A_\beta(t,\noise,\noiseb,m,u,v)\right)m(du,dv)\\
=\int_{\reels^{2d}} (F^{u}-F^{v})\cdot(W^{u}-W^{v})m(du,dv)+\int_{\reels^{2d}}(G^{u}-G^{v})\cdot (u-v)m(du,dv)\\
\nonumber +A(b^\noise-b^{\noiseb} )\cdot (\noise-\noiseb)\\
\nonumber-2\beta(t) \int_{\reels^{2d}} ( G^{u}-G^{v})\cdot(W^{u}-W^{v}) m(du,dv)-\frac{d\beta}{dt}\int_{\reels^{2d}} |W^{u}-W^{v}|^2m(du,dv)\\
\nonumber\text{ for }(t,\noise,\noiseb,m)\in (0,T)\times\reels^{2m}\times\mathcal{P}_2(\reels^{2d}).
\end{gather}
with $W^{u}=W(t,u,\noise,\pi_d m)$, $W^{v}=W(t,v,\noiseb,\pi_{-d} m)$, $F^{u}=F(u,\noise,\pi_d m,W^{u})$, $G^{v}=G(v,\noiseb,\pi_{-d}m,W^{v})$, $b^\noise=b[W](t,\noise,\pi_d m)$ and so on.
\end{lemma}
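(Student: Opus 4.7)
The plan is to mirror the two-step strategy used in Lemma \ref{lemma: Z supersol b(p) W2}, with the key additional ingredient being an Itô-type expansion of the quadratic term $\frac{1}{2}(\noise - \noiseb) \cdot A (\noise - \noiseb)$ along coupled characteristics. First I would fix $(t, x_0, y_0, \noise_0, \noiseb_0, \gamma_0)$ with $t < T$ and introduce the coupled system
\[
\begin{array}{l}
d\noise_s = -b[W](t-s, \noise_s, \mu_s)\,ds + \sqrt{2\sigma_\noise}\,dB^\noise_s, \quad \noise_{s=0} = \noise_0,\\
d\noiseb_s = -b[W](t-s, \noiseb_s, \nu_s)\,ds + \sqrt{2\sigma_\noise}\,dB^\noise_s, \quad \noiseb_{s=0} = \noiseb_0,\\
dX_s = -F(X_s, \noise_s, \mu_s, W(t-s, X_s, \noise_s, \mu_s))\,ds + \sqrt{2\sigma_x}\,dB_s, \quad X_0 = x_0,\\
dY_s = -F(Y_s, \noiseb_s, \nu_s, W(t-s, Y_s, \noiseb_s, \nu_s))\,ds + \sqrt{2\sigma_x}\,dB_s, \quad Y_0 = y_0,\\
d\gamma_s = \text{coupled Fokker--Planck drift under } (\noise_s, \noiseb_s),
\end{array}
\]
where crucially both noise processes share the same Brownian motion $(B^\noise_s)$ but have drifts depending on their respective marginal measures, so $\pi_d \gamma_s = \mu_s$ and $\pi_{-d} \gamma_s = \nu_s$ almost surely by pathwise uniqueness.

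Second, since $\noise_s - \noiseb_s$ has vanishing martingale part (the $dB^\noise$ cancels), the chain rule gives
\[
\tfrac{1}{2}(\noise_0 - \noiseb_0) \cdot A (\noise_0 - \noiseb_0) = \tfrac{1}{2}(\noise_{t-s} - \noiseb_{t-s}) \cdot A (\noise_{t-s} - \noiseb_{t-s}) + \int_0^{t-s} (b^{\noise}_u - b^{\noiseb}_u) \cdot A (\noise_u - \noiseb_u)\,du,
\]
with the obvious notation. Combining this with the dynamic programming principle \eqref{DPP b(p)} for $V$ applied to the couple $(\noise, \mu)$ versus $(\noiseb, \nu)$, and with the Jensen-type inequality \eqref{square martingale inequality b(p)} used to bound $|W(t,x_0,\noise_0,\mu_0) - W(t,y_0,\noiseb_0,\nu_0)|^2$ from above by the expectation of the corresponding squared martingale increment, yields an inequality for the integrand $\Lambda^A$ defined as the sum of the three terms appearing in \eqref{def: Z beta b(p,m)} before integration. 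Integrating against $\gamma_0$ and using the conditional flow identity $\int \E[f((X,Y)_s^{(x_0,y_0)})]\,\gamma_0(dx_0,dy_0) = \E[\int f\,d\gamma_s]$ that we established in the proof of Lemma \ref{lemma: Z supersol b(p) W2} produces a dynamic programming inequality for $Z^A_\beta$ whose infinitesimal form is exactly \eqref{eq: Z supersol b(p,m) W2}.

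Third, to upgrade this dynamic programming inequality into the viscosity supersolution statement, I would take $\varphi \in H_{test}$ and a minimum point $(t^*, \noise^*, \noiseb^*, \gamma^*)$ of $Z^A_\beta - \varphi$, normalized to value zero. Since $Z^A_\beta(s, \noise_{t^* - s}, \noiseb_{t^* - s}, \gamma_{t^* - s}) \geq \varphi(s, \noise_{t^* - s}, \noiseb_{t^* - s}, \gamma_{t^* - s})$ with equality at $s = t^*$, the same inequality is transferred to $\varphi$; then dividing by $t^* - s$, letting $s \uparrow t^*$, and passing to the limit via dominated convergence (using the uniform $H_{test}$ bounds and Lemma 5.15 of \cite{convergence-problem} for the measure-derivative terms) gives the required supersolution inequality at $(t^*, \noise^*, \noiseb^*, \gamma^*)$. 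The main novelty compared to the autonomous case is solely bookkeeping: the term $A(b^\noise - b^{\noiseb}) \cdot (\noise - \noiseb)$ appears on the right-hand side of \eqref{eq: Z supersol b(p,m) W2} precisely as the contribution of the new quadratic term, and the fact that $b$ now depends on $W$ and $m$ poses no extra difficulty since $W$ is Lipschitz on $[0, T)$ so the coupled SDE--SPDE system is well-posed by \cite{probabilistic-mfg}. I expect the most delicate point to be justifying the conditional flow identity in the present coupled setting where both marginals of $\gamma_s$ evolve under distinct, interacting noise-dependent drifts, but this follows from the same almost-sure pathwise uniqueness argument used previously.
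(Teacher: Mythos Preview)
Your proposal is correct and follows essentially the same approach as the paper's proof. The paper likewise introduces the doubled noise variables $(\noise_s,\noiseb_s)$ driven by the \emph{same} Brownian motion $B^\noise$, derives the identical chain-rule identity for $\tfrac{1}{2}(\noise-\noiseb)\cdot A(\noise-\noiseb)$ (the martingale parts cancelling), and then refers the remaining computations back to Lemma~\ref{lemma: Z supersol b(p) W2}; your write-up is in fact more detailed than the paper's, which simply notes that the integration against $\gamma_0$ poses no difficulty because $(\noise_s,\noiseb_s)$ does not depend on $(x_0,y_0)$.
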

\begin{proof}
Because the proof is very similar to the one of Lemma \ref{lemma: Z supersol b(p) W2}, we focus on the new part only, the rest following from previously introduced arguments. For $t<T$, we define
\[
\left\{
\begin{array}{c}
d\noise_s=-b[W](t-s,\noise_s,\mu_s)ds+\sqrt{2\sigma_\noise}dB^\noise_s \quad \noise_{t=0}=\noise_0,\\
d\noiseb_s=-b[W](t-s,\noiseb_s,\nu_s)ds+\sqrt{2\sigma_\noise}dB^\noise_s \quad \noiseb_{t=0}=\noiseb_0,\\
dX_s=-F(X_s,\noise_s,\mu_s,W(t-s,X_s,\noise_s,\mu_s))ds+\sqrt{2\sigma_x} dB_s \quad X_{t=0}=x_0,\\
dY_s=-F(Y_s,\noiseb_s,\nu_s,W(t-s,Y_s,\noiseb_s,\nu_s))ds+\sqrt{2\sigma_x} dB_s \quad Y_{t=0}=y_0,\\
     dm_s=(-\text{div}\left(\left(\begin{array}{c}F(x,\noise_s,\mu_s,W(t-s,x,\noise_s,\mu_s))\\ F(y,\noiseb_s,\nu_s,W(t-s,y,\noiseb_s,\nu_s)) \end{array}\right)m_s\right)+\sigma_x \text{Tr}\left(BD^2_{(x,y)}m_s\right))ds \quad m_{t=0}=m_0.\\
\end{array}
\right.
\]
We still use the same notation $W_s^x=W(t-s,\noise_s,X_s,\mu_s)$, however now $W_s^y=W(t-s,\noiseb_s,Y_s,\nu_s)$ and $b^x_s=b(\noise_s,\mu_s,W(t-s,\cdot,\noise_s,\mu_s))$ even though it does not depend on $x$. We have to introduce a doubling of variable in $\noise$ as we did in the measure variable to account for the fact that $b$ is not autonomous anymore. 
By Ito's lemma, for any $s\leq t$
\[\frac{1}{2}(\noise_0-\noiseb_0)\cdot A\cdot (\noise_0-\noiseb_0)=\frac{1}{2}(\noise_{t-s}-\noiseb_{t-s})\cdot A\cdot (\noise_{t-s}-\noiseb_{t-s})+\int_0^{t-s}A(b^x_u-b^y_u)\cdot (\noise_{u}-\noiseb_{u})du,\]
as $(\theta_s)_{s\leq t}$,$(\noiseb_s)_{s\leq t}$ have been generated with the same Brownian motion. In this way, we treated the new term associated to $A$. There is no particular difficulty in extended the computations of Lemma \ref{lemma: Z supersol b(p) W2} to the two others terms involving $W$. When integrating with respect to $m_0$ there is no further difficulty as $(\noise_s,\noiseb_s)_{s\in[0,t]}$ does not depend on the initial conditions $(x_0,y_0)$ of $(X_s,Y_s)_{s\in[0,t]}$.
\end{proof}
To propagate the non-negativity of $Z^A_\beta$, we are going to need some monotonicity assumptions not just on $(G,F)$ but rather on $(G,F,Ab)$. At least formally, if we can find a $A$ for which furthermore $Z^A_\beta|_{t=0}\geq 0$, then we expect that the long time existence of a Lipschitz solution can be proved. This is exactly the monotonicity we ask in the following Hypothesis,
\begin{hyp}
\label{hyp: (F,G,W_0) b(p,m) W2}
There exists $A\in\mathcal{S}_n(\reels)$ and an $\alpha>0$, such that for any $(\noise,\noiseb,X,Y) \in (\reels^{n})^2\times (L^2(\Omega,\reels^{d}))^4$ and $f,g\in C_b(\reels^d,\reels^d)$
\begin{gather}
         \nonumber \esp{\frac{1}{2}(\noise-\noiseb)\cdot A\cdot (\noise-\noiseb)+\left(W_0(X,\noise,\mathcal{L}(X)-W_0(Y,\noiseb,\mathcal{L}(Y))\right)\cdot(X-Y)}\\
         \label{Hyp: W0 b(p,m) W2}\geq \alpha \left(\|W_0(X,\noise,\mathcal{L}(X))-W_0(Y,\noiseb,\mathcal{L}(Y))\|^2_{L^2}\right),
\end{gather}
\begin{gather}
    \nonumber \esp{(F(X,\noise,\mathcal{L}(X),f(X))-F(Y,\noiseb,\mathcal{L}(Y),g(Y))\cdot(f(X)-g(Y))}\\
    \label{hyp: (F,G) b(p,m) W2}+\esp{(G(X,\noise,\mathcal{L}(X),f(X))-G(Y,\noiseb,\mathcal{L}(Y),g(Y)))\cdot(X-Y)}\\
    \nonumber +(b(\noise,\mathcal{L}(X),f(\cdot))-b(\noiseb,\mathcal{L}(Y),g(\cdot))\cdot A\cdot  (\noise-\noiseb)\\
   \nonumber \geq  \alpha \underset{t\in[0,1]}{\min}\|G(X,\noise,\mathcal{L}(X),tf(X)+(1-t)g(Y)-G(Y,\noiseb,\mathcal{L}(Y),tf(X)+(1-t)g(Y))\|^2_{L^2}.
\end{gather}
\end{hyp}
\begin{remarque}
The second part of this assumption is stated slightly differently than its counterpart in Hypothesis \ref{hyp: (G,F,w0) b(p) W2}. Essentially, it is still the same idea, except we now require the random variables corresponding to the non-linearity to be functions of $X$ and $Y$ to account for the functional dependency of $b$. As in Hypothesis \ref{hyp: separated Hamiltonian +LL monotonicity b(p,m)} this joint monotonicity assumption is in general not going to be satisfied unless each of the coefficients is in some sense strongly monotone.
 \end{remarque}
 \begin{exemple}
    Let\[G(x,\theta,\mu,p)=\tilde{G}(x,\theta),\] for a function $\tilde{G}$ such that $\frac{1}{2}\left(D_x \tilde{G}+D_x\tilde{G}^T\right)\geq \alpha_G I_d$ and \[F(x,\theta,\mu,p)=\alpha_F p,\] for some $\alpha_F>0$ (which correspond to a Hamiltonian with a purely quadratic non-linearity). Take 
    \[b(\theta,\mu,f(\cdot)=\tilde{b}(\theta,f_\#\mu),\] for a function $\tilde{b}$ satisfying $\frac{1}{2}(D_\theta b+D_\theta b^T)\geq \alpha_b I_n$ and Lipschitz in $\mathcal{W}_2$ in the second variable. If
    \begin{equation}
    \label{eq: example joint monotonocity}
    4 \alpha_G\alpha_F (\alpha_b)^2> \|D_\theta G\|_\infty^2\|b\|_{Lip(\mathcal{W}_2)}^2,\end{equation}
    then there exists an $a>0$ and a matrix $A=aI_n$ such that \eqref{hyp: (F,G) b(p,m) W2} holds. 
 \end{exemple}
Indeed, we are looking for a constant $a>0$ such that the matrix 
\[
\left(\begin{array}{ccc}
    \alpha_G-\varepsilon & 0 & -\frac{1}{2}\|D_\theta G\|_\infty  \\
      0& \alpha_F & -\frac{a}{2}\|b\|_{Lip(\mathcal{W}_2)}\\
      -\frac{1}{2}\|D_\theta G\|_\infty & -\frac{a}{2}\|b\|_{Lip(\mathcal{W}_2)} &a\alpha_b-\varepsilon
\end{array}\right)\geq 0,\]
for some $\varepsilon>0$.
For such $a$ to exists, a sufficient condition is to ask for both matrix
\[
\left(\begin{array}{cc}
    \alpha_G& -\frac{1}{2}\|D_\theta G\|_\infty  \\
      -\frac{1}{2}\|D_\theta G\|_\infty &\frac{1}{2}a\alpha_b
\end{array}\right), 
\left(\begin{array}{ccc}
      \alpha_F & -\frac{a}{2}\|b\|_{Lip(\mathcal{W}_2)}\\
-\frac{a}{2}\|b\|_{Lip(\mathcal{W}_2)} &\frac{1}{2}a\alpha_b
\end{array}\right),\]
to be definite positive. This is true as soon as 
\[\frac{2\alpha_F\alpha_b}{\|b\|^2_{Lip(\mathcal{W}_2)}}>a>\frac{\|D_\theta G\|_\infty^2}{2\alpha_G\alpha_b}.\]
Let us insist on the fact that this not a smallness assumption on $b$. Replacing $b$ with $\eta b$ for some $\eta << 1$ yields the same condition \eqref{eq: example joint monotonocity} as $\eta$ cancels out in this equation.

 \begin{lemma}
 \label{Z b(p,m) non negative}
     Let $W$ be a Lipschitz solution to \eqref{eq: ME W2 Lip} on $[0,T)$. Under Hypothesis \ref{hyp: Lipschitz W2}, if there exists $A\in \mathcal{S}_n(\reels)$ such that Hypothesis \ref{hyp: (F,G,W_0) b(p,m) W2} holds then there exists a strictly positive function of time $\beta:[0,T)\to\reels^{+*}$ such that 
     \[\forall t<T, (\noise,\noiseb,\gamma)\in (\reels^{n})^2\times \mathcal{P}_2(\reels^{2d}) \quad Z^A_\beta(t,\noise,\noiseb,\gamma)\geq 0,\]
     for $Z^A_\beta$ as defined in \eqref{def: Z beta b(p,m)}.
 \end{lemma}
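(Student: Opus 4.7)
The plan is to mirror the argument of Lemma \ref{Z beta positive b(p)}, using Lemma \ref{lemma: Z supersol b(p,m) W2} as the viscosity supersolution identity and then running a maximum-principle argument adapted to the doubled $\theta$ variable.

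First I would exploit the joint monotonicity in Hypothesis \ref{hyp: (F,G,W_0) b(p,m) W2} to kill the right-hand side of the viscosity equation for $Z^A_\beta$. Writing $G^u_\lambda = G(u,\noise,\pi_d\gamma,\lambda W^u+(1-\lambda)W^v)$ and similarly $G^v_\lambda$, the Lipschitz assumption on $G$ gives the identity
\[
\int_{\reels^{2d}}(G^u-G^v)\cdot(W^u-W^v)\gamma(du,dv) \leq \int_{\reels^{2d}}|G^u_\lambda-G^v_\lambda|^2\gamma(du,dv) + (2\|G\|_{Lip}+1)\int_{\reels^{2d}}|W^u-W^v|^2\gamma(du,dv).
\]
Combining this with \eqref{hyp: (F,G) b(p,m) W2} (taking for $f,g$ the appropriate restrictions of $W(t,\cdot,\noise,\pi_d\gamma)$ and $W(t,\cdot,\noiseb,\pi_{-d}\gamma)$ and applying Proposition \ref{prop: expectation to pointwise} style arguments to lift the $L^2$ inequality to the $\gamma$-integral), the right-hand side of \eqref{eq: Z supersol b(p,m) W2} is bounded below by
\[
(\alpha - 2\beta(t))\min_{\lambda\in[0,1]}\int|G^u_\lambda-G^v_\lambda|^2\gamma(du,dv) - \bigl(2\beta(t)(2\|G\|_{Lip}+1)+\beta'(t)\bigr)\int|W^u-W^v|^2\gamma(du,dv).
\]
Choosing $\beta(t)=\beta_0 e^{-k t}$ with $\beta_0\leq \alpha/2$ and $k\geq 4\|G\|_{Lip}+2$ makes both coefficients of the right sign, so this quantity is non-negative, and $Z^A_\beta$ becomes a viscosity supersolution of the homogeneous equation.

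Next I would run a comparison with $0$, adapting Lemma \ref{smooth comparison with 0} to the doubled state $(\noise,\noiseb)$. Fix $t<T$ and, for $\alpha>0$ and $\kappa>0$ to be chosen, set
\[
Z_\alpha(s,\noise,\noiseb,\gamma) = Z^A_\beta(s,\noise,\noiseb,\gamma) + \alpha e^{\kappa s}\bigl(1+|\noise|^3+|\noiseb|^3+E_3^3(\gamma)\bigr) + \frac{\alpha}{t-s}.
\]
Since $W$ is Lipschitz on $[0,T)$, the map $Z^A_\beta$ has at most quadratic growth in $(\noise,\noiseb,\gamma)$, so the penalization in cube ensures coercivity. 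Using the Prokhorov/moment argument from Lemma \ref{smooth comparison with 0} (third-moment bounds give $\mathcal{W}_2$-compactness together with the lower semicontinuity of $m\mapsto \int|z|^3 m(dz)$), the infimum of $Z_\alpha$ is achieved on a $\mathcal{W}_2$-compact set at some $(s^*,\noise^*,\noiseb^*,\gamma^*)$ with $s^*<t$.

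The heart of the argument is then the contradiction: assuming $\inf Z_\alpha<0$, by \eqref{Hyp: W0 b(p,m) W2} we get $s^*>0$, so the penalization is an admissible test function in a neighbourhood of $(s^*,\noise^*,\noiseb^*,\gamma^*)$ (as per the remark following Lemma \ref{lemma: Z supersol b(p) W2}). Plugging it into the viscosity supersolution inequality and using the linear growth of $b,F,G$ together with the cubic lower bound, one obtains
\[
0 \geq \frac{\alpha}{(t-s^*)^2} + \alpha e^{\kappa s^*}(\kappa - c)\bigl(1+|\noise^*|^3+|\noiseb^*|^3+E_3^3(\gamma^*)\bigr),
\]
for a constant $c$ depending only on the Lipschitz data, $t$, and the Lipschitz norm of $W$ on $[0,t]$. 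Picking $\kappa>c$ gives the contradiction, so $\inf Z_\alpha\geq 0$; sending $\alpha\to 0$ and then $t\to T$ yields $Z^A_\beta\geq 0$ on $[0,T)\times(\reels^n)^2\times\mathcal{P}_2(\reels^{2d})$.

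The main obstacle I expect is bookkeeping in the maximum-principle step. In particular, one must ensure that the cubic penalty in $\noise,\noiseb,\gamma$ qualifies as a valid test function near the minimum (so that the supersolution inequality applies), and that the cross term $A(b^\noise-b^{\noiseb})\cdot(\noise-\noiseb)$ from \eqref{eq: Z supersol b(p,m) W2} integrates cleanly with the monotonicity estimate \eqref{hyp: (F,G) b(p,m) W2}. The latter is precisely why Hypothesis \ref{hyp: (F,G,W_0) b(p,m) W2} bundles $(F,G,Ab)$ into a single joint monotonicity condition rather than treating them separately.
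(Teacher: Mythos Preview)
Your proposal is correct and follows essentially the same route as the paper: bound the cross term $\int(G^u-G^v)\cdot(W^u-W^v)\,d\gamma$ using the Lipschitz constant of $G$, absorb the right-hand side of \eqref{eq: Z supersol b(p,m) W2} via Hypothesis \ref{hyp: (F,G,W_0) b(p,m) W2} for an exponentially decaying $\beta$, and then run the penalized comparison with $0$ exactly as in Lemma \ref{Z beta positive b(p)} but with the doubled variable $(\noise,\noiseb)$. One small remark: there is no need to invoke Proposition \ref{prop: expectation to pointwise} to ``lift the $L^2$ inequality to the $\gamma$-integral''---the hypothesis is already stated for arbitrary $(X,Y)\in(L^2)^2$, so taking $(X,Y)$ with joint law $\gamma$ makes the expectation in \eqref{hyp: (F,G) b(p,m) W2} equal to the $\gamma$-integral directly.
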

\begin{proof}
    Observe that we still have for any $\lambda \in[0,1]$
 \begin{gather*}
 \int_{\reels^{2d}} \langle G^{u}-G^{v},W^{u}-W^{v}\rangle m(du,dv)\\
 \leq\frac{1}{2} \int_{\reels^{2d}} |G(u,\pi_d m,\noise,\lambda W^{u}+(1-\lambda )W^{v})-G(v,\pi_{-d},\noiseb,\lambda W^{u}+(1-\lambda)W^{v})|^2m(du,dv)\\
 +(2\|G\|_{Lip}+\frac{1}{2})\int_{\reels^{2d}}|W^{u}-W^{v}|^2m(du,dv).
 \end{gather*}
As a consequence, under Hypothesis \ref{hyp: (F,G,W_0) b(p,m) W2}, for $\beta(t)=\beta_0e^{-\gamma t}$ with $\beta_0$ sufficiently small and $\gamma$ sufficiently big, $Z^A_\beta$ is a viscosity supersolution of 
 \begin{gather*}
 \nonumber \partial_t Z^A_\beta(t,\noise,\noiseb,m)+b^{u}\cdot \nabla_\noise Z^A_\beta+b^{v}\cdot \nabla_{\noiseb} Z^A_\beta-\sigma_\noise \text{Tr}\left(B_mD^2_{(\noise,\noiseb)} Z^A_\beta\right)\\
\nonumber+\int_{\reels^{2d}}\left(\begin{array}{c}
     F^{u}  \\
     F^{v}
\end{array}
\right)
\cdot D_m Z^A_\beta(t,\noise,\noiseb,m,u,v) m(du,dv)-\sigma_x\int_{\reels^{2d}}\text{Tr}\left(B_d D_{(u,v)} D_mZ^A_\beta(t,\noise,\noiseb,m,u,v)\right)m(du,dv)\geq 0,
 \end{gather*}
 satisfying $Z^A_\beta|_{t=0}\geq 0$ on $(0,T)$. By a comparison principle with $0$ as conducted in Lemma \ref{Z beta positive b(p)}, we deduce that 
 \[\forall t<T\quad  \forall (\noise,\noiseb,m)\in\reels^{2m}\times \mathcal{P}_2(\reels^{2d}) \quad Z^A_\beta(t,\noise,\noiseb,m)\geq 0.\]
\end{proof}
It now remains to show that this monotonicity estimate is sufficient to get Lipschitz estimates on solutions of \eqref{eq: ME W2 Lip}.
 \begin{thm}
 \label{thm: existence b(p,m) W2}
     Under Hypothesis \ref{hyp: Lipschitz W2} and Hypothesis \ref{hyp: (F,G,W_0) b(p,m) W2} there exists a unique Lipschitz solution $W$ to \eqref{eq: ME W2 Lip} on $[0,+\infty)$.
 \end{thm}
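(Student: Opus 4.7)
The plan is to parallel the strategy of Theorem \ref{thm: b(p) W2}, using the auxiliary function $Z^A_\beta$ in place of $Z_\beta$ to accommodate the coupled noise dynamics. Local existence on some interval $[0,T_c)$ follows from an adaptation of Theorem \ref{lip sol b(p)} under Hypothesis \ref{hyp: Lipschitz W2}. I shall show that the maximal existence time satisfies $T_c=+\infty$ by proving uniform-in-time Lipschitz estimates on $W$ on $\reels^d\times\reels^n\times\mathcal{P}_2(\reels^d)$.

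The key first step is to extract pointwise consequences from Lemma \ref{Z b(p,m) non negative}. The inequality $Z^A_\beta(t,\noise,\noiseb,\gamma)\geq 0$ translates, for any $(X,Y)\in(L^2(\Omega,\reels^d))^2$ with $X\sim\mu$, $Y\sim\nu$, into
\[
\beta(t)\,\esp{|W(t,X,\noise,\mu)-W(t,Y,\noiseb,\nu)|^2}\leq \tfrac{1}{2}(\noise-\noiseb)\cdot A(\noise-\noiseb)+\esp{(W(t,X,\noise,\mu)-W(t,Y,\noiseb,\nu))\cdot(X-Y)}.
\]
Applying Cauchy-Schwarz to the last term and Young's inequality, I obtain
\[
\esp{|W(t,X,\noise,\mu)-W(t,Y,\noiseb,\nu)|^2}\leq C(t)\bigl(|\noise-\noiseb|^2+\esp{|X-Y|^2}\bigr),
\]
for some $C(t)$ locally bounded on $[0,T_c)$. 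Specialising $\noise=\noiseb$, $Y\equiv x'$ deterministic, and $X\equiv x$, Proposition \ref{prop: expectation to pointwise} (applied as in Lemma \ref{lemma: estimate in x b(p) W2}) yields a bound on $\|W(t,\cdot)\|_{Lip(x)}$, and similarly taking $X=Y$, $\mu=\nu$ yields a bound on $\|W(t,\cdot)\|_{Lip(\noise)}$, both uniform on compact subintervals of $[0,T_c)$.

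The second step is to upgrade this $L^2$-lift Lipschitz estimate into a $\mathcal{W}_2$ Lipschitz estimate. Following the final step of Theorem \ref{thm: b(p) W2}, I fix $t<T_c$ and two initial conditions $(\noise,\mu_0),(\noiseb,\nu_0)$, consider an optimal $\mathcal{W}_2$-coupling $\rho_0\in\Gamma(\mu_0,\nu_0)$, and propagate it through the coupled SDE-SPDE system of characteristics
\[
\left\{
\begin{array}{l}
d\noise_s=-b[W](t-s,\noise_s,\mu_s)ds+\sqrt{2\sigma_\noise}dB^\noise_s,\\
d\noiseb_s=-b[W](t-s,\noiseb_s,\nu_s)ds+\sqrt{2\sigma_\noise}dB^\noise_s,\\
d\rho_s=\bigl(-\text{div}\bigl(F^{\noise,\mu}\oplus F^{\noiseb,\nu}\,\rho_s\bigr)+\sigma_x\text{Tr}(B_d D^2_{(x,y)}\rho_s)\bigr)ds,
\end{array}\right.
\]
with $\pi_d\rho_s=\mu_s$ and $\pi_{-d}\rho_s=\nu_s$ a.s. by pathwise uniqueness. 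Using the $L^2$-lift estimate just obtained on $W$ (which controls the discrepancy between the two drifts in terms of $|\noise_s-\noiseb_s|+\mathcal{W}_2(\mu_s,\nu_s)$) together with Hypothesis \ref{hyp: Lipschitz W2} on $b$, a pathwise Grönwall argument in $\int|x-y|^2\rho_s(dx,dy)+|\noise_s-\noiseb_s|^2$ delivers
\[
\esp{|\noise_s-\noiseb_s|^2+\mathcal{W}_2^2(\mu_s,\nu_s)}\leq C\bigl(|\noise-\noiseb|^2+\mathcal{W}_2^2(\mu_0,\nu_0)\bigr),\qquad s\leq t,
\]
with $C$ depending only on the data and $t$, not on $\|W(t,\cdot)\|_{Lip(\mathcal{W}_2)}$. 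Plugging this into the fixed-point representation formula for the Lipschitz solution $W$, together with the already-established estimates on $\|W(t,\cdot)\|_{Lip(x)}$ and $\|W(t,\cdot)\|_{Lip(\noise)}$, and applying Grönwall once more to $s\mapsto\|W(s,\cdot)\|_{Lip(\mathcal{W}_2)}$ yields a locally bounded estimate of $\|W(t,\cdot)\|_{Lip(\mathcal{W}_2)}$ on $[0,T_c)$.

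Finally, the blow-up criterion of Theorem \ref{lip sol b(p)} forces $T_c=+\infty$. Uniqueness is an immediate consequence of the Lipschitz nature of the solution concept. The main obstacle I expect is the second step: because the noise dynamics depend on both $\mu$ and $W$, the stability estimate for the characteristic system has to be closed jointly in $(\noise,\mu)$, and the $L^2$-lift Lipschitz bound coming from $Z^A_\beta\geq 0$ must be strong enough to dominate the functional dependence of $b$ on $W$ under Hypothesis \ref{hyp: Lipschitz W2}. This is precisely why the penalisation matrix $A$ appears in the joint monotonicity condition \eqref{hyp: (F,G) b(p,m) W2} rather than a simple diagonal one.
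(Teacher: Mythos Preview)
Your proof sketch is correct and follows essentially the same route as the paper's own argument: local existence, then the monotonicity estimate $Z^A_\beta\geq 0$ from Lemma \ref{Z b(p,m) non negative} yields $L^2$-lift Lipschitz bounds, which are upgraded to genuine $\mathcal{W}_2$ and $\noise$ Lipschitz estimates via stability of the coupled characteristic system and Gr\"onwall, after which the blow-up criterion concludes. One small imprecision: when you take $X=Y$, $\mu=\nu$ in your displayed $L^2$ inequality you obtain a bound on $\|W(t,\cdot)\|_{Lip(L^2),\noise}$ (the lift semi-norm), not on the pointwise $\|W(t,\cdot)\|_{Lip(\noise)}$; the paper is explicit about this distinction and recovers the pointwise $\noise$-Lipschitz bound only in the second step, together with the $\mathcal{W}_2$ estimate, via the representation formula and the stability of $(\noise_s,\mu_s)$ with respect to initial conditions.
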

 \begin{proof}
Local existence of a Lipschitz solution $W$ on a time interval $[0,T_c)$ is a direct consequence of Hypothesis \ref{hyp: Lipschitz W2}. We now show that $W$ does not blow up in finite time.

 \noindent\textit{Step 1: Lipschitz estimate in $L^2$}\\
 From Lemma \ref{Z b(p,m) non negative}, we now that there exists a function $\beta:\reels^+\to \reels^{+*}$ such that $Z^A_\beta$ is positive for any $t<T_c$. For $\noise=\noiseb$ this gives that for any $t<T_c$, $(\noise,X,Y)\in \reels^n\times (L^2(\Omega,\reels^d))^2$
 \begin{gather*}
 \esp{(W(t,X,\noise,\mathcal{L}(X))-W(t,Y,\noise,\mathcal{L}(Y)))\cdot (X-Y)}\geq \beta(t)\|W(t,X,\noise,\mathcal{L}(X))-W(t,Y,\noise,\mathcal{L}(Y))\|_{L^2}.
 \end{gather*}
 By Lemma \ref{lemma: estimate in x b(p) W2}, this implies that 
 \[\|W(t,\cdot)\|_{Lip(x)}\leq \frac{1}{\beta(t)}.\]
It only remains to get a Lipschitz estimate in the noise variable $\noise$ and the measure argument.  Let us first introduce an alternate Lipschitz semi-norm in $L^2$, for a function $L^2(\Omega,\reels^d)\times\reels^n\ni(X,\noise)\mapsto U(X,\noise)$ we denote 
 \[\|U\|_{Lip(L^2)}:=\underset{(X,Y,\noise,\noiseb)}{\sup}\frac{\|U(X,\noise)-U(Y,\noiseb)\|_{L^2}}{|\noise-\noiseb|+\|X-Y\|_{L^2}}.\]
We also denote  
\[\|U\|_{Lip(L^2),x}:=\underset{(X,Y,\noise)}{\sup}\frac{\|U(X,\noise)-U(Y,\noise)\|_{L^2}}{\|X-Y\|_{L^2}},\]
and similarly we define $\|U\|_{Lip(L^2),\theta}$ by  
\[\|U\|_{Lip(L^2),\theta}=\underset{(X,\noise,\noiseb)}{\sup}\frac{\|U(X,\noise)-U(X,\noiseb)\|_{L^2}}{|\noise-\noiseb|}\]
We are now going to show that $W$ is Lipschitz with respect to this $L^2$ norm\footnote{Since this is the semi-norm of the lift of $W$, $(X,\noise)\mapsto W(t,X,\theta,\mathcal{L}(X))$, it is weaker than the usual Lipschitz semi-norm. See for exemple \[W(X,\theta,\mathcal{L}(X))=\frac{X-\esp{X}}{1+\sqrt{\esp{(X-\esp{X})^2}}} \theta,\] seen as a fonction $(x,\theta,\mu)\mapsto U(x,\theta,\mu)$ this function is only locally Lipschitz but $\|W\|_{Lip(L^2),\theta}=1$  }. Remember that the non negativity of $Z^A_\beta$ is equivalent to
\begin{gather}
\label{Z b beta L2} \forall (t,X,Y,\noise,\noiseb)\in (0,T_c)\times L^2(\Omega,\reels^{2d})\times \reels^{2m}\\
         \nonumber \frac{1}{2}(\noise-\noiseb)\cdot A\cdot (\noise-\noiseb)+\esp{\left(W(t,X,\noise,\mathcal{L}(X)-W(t,Y,\noiseb,\mathcal{L}(Y))\right)\cdot(X-Y)}\\
         \nonumber\geq \beta(t)\|W(t,X,\noise,\mathcal{L}(X))-W(t,Y,\noiseb,\mathcal{L}(Y))\|^2_{L^2}.
\end{gather}
Taking this expression for $\noise=\noiseb$ and $X=Y$ gives
\[\|W(t,\cdot)\|_{Lip(L^2),\noise},\|W(t,\cdot)\|_{Lip(L^2),x}\leq \frac{1+\|A\|}{\beta(t)}.\]
\noindent\textit{Step 2: Lipschitz estimate in $\reels^n\times \mathcal{P}_2(\reels^d)$}\\
It is a priori not trivial that the Lipschitz estimates in $L^2$ we just got translates into Lipschitz estimates for the norms of interest. To get estimates for the Euclidean and Wasserstein distance we proceed as we did in Theorem \ref{thm: b(p) W2}: by proving that this $L^2$ Lipschitz norm is sufficient to get Lipschitz continuity in $\mathcal{W}_2$ with respect to initial condition for the following SDE-SPDE system
\[\left\{\begin{array}{l}
d\mu^{\mu_0,\noise}_s=\left(-\text{div}\left(F(x,\noise_s,\mu_s^{\mu_0,\noise},W(t-s,x,\noise_s,\mu_s^{\mu_0,\noise}))\right)+\sigma_x\Delta_x \mu^{\mu_0,\noise}_s\right)ds \quad \mu_0\in\mathcal{P}_2(\reels^d),\\
d\noise^{\mu_0,\noise}_s=-b[W](t-s,\noise^{\mu_0,\noise}_s,\mu_s^{\mu_0,\noise})ds+\sqrt{2\sigma_x}dB^\noise_s \quad \noise_0=\noise\in\reels^n.\\
\end{array}\right.\]
There is no difficulty in extending the previous result by looking at the quantity 
\[\esp{\mathcal{W}_2^2(\mu^{\mu_0,\noise_0},\mu^{\nu_0,\noiseb_0})+|\noise_s^{\mu_0,\noise_0}-\noiseb_s^{\nu_0,\noiseb_0}|^2},\]
and applying Grönwall's Lemma. It is however important to remark that this is where the dynamics we imposed on $b$ in Hypothesis \ref{hyp: Lipschitz W2} comes into play. Indeed, this proof only works because 
\begin{gather*}
|b[W](t-s,\noise^{X_0,\noise_0}_s,\mu_s^{X_0,\noise_0})-b[W](t-s,\noise^{Y_0,\noiseb_0}_s,\mu_s^{Y_0,\noiseb_0})|\\
\leq C\left(|\noise^{X_0,\noise_0}-\noise^{Y_0,\noiseb_0}|+\|X^{X_0,\noise_0}-X^{Y_0,\noiseb_0}\|_{L^2}+\|W(t-s,X^{X_0,\noise_0},\noise^{X_0,\noise_0})-W(t-s,X^{Y_0,\noiseb_0},\noise^{Y_0,\noiseb_0})\|_{L^2}\right).
\end{gather*}
Once the Lipschitz continuity of this system with respect to initial conditions is established, an estimate on $\|W\|_{Lip(\mathcal{W}_2)}$ and $\|W\|_{Lip,\theta}$ is simply given by Grönwall's Lemma. 

\noindent\textit{Step 3: Conclusion}\\
Since we now have Lipschitz estimates on $W$ on any time interval, we get by a proof by contradiction that $T_c=+\infty$ must hold. As a consequence, there is indeed existence of a unique Lipschitz solution on $[0,+\infty)$.

 \end{proof}
 \subsubsection{Extensions and applications}
 We now come back on a possible application of those results outside of mean field games: the solvability of forward-backward mean field systems. Whenever $W$ is smooth the fact that it is a decoupling field for one such system is a direct consequence of Ito's lemma. We claim that it is still true for Lipschitz solutions. In particular, the existence of a Lipschitz solution to \eqref{eq: ME W2 Lip} gives an existence result for the associated FBSDE
\begin{lemma}
\label{lemma: lip sol FBSDE}
    Suppose there exists a Lipschitz solution $W$ to \eqref{eq: ME W2 Lip} on $[0,T]$ and that $b$ is of the form 
    \[b(\noise,\mathcal{L}(X),f(\cdot))=\tilde{b}(\noise,\mathcal{L}(X,f(X))).\]
    We define 
\begin{equation*}
\left\{
\begin{array}{l}
   \displaystyle X_t=x_0-\int_0^t F(X_s,\noise_s,\mathcal{L}(X_s|(\noise_u)_{u\leq s}),W(T-s,X_s,\noise_s,\mathcal{L}(X_s|(\noise_u)_{u\leq s})))ds+\sqrt{2\sigma_x}B_t,\\
   \displaystyle \noise_t=\noise_0-\int_0^t b(\noise_s,\mathcal{L}(X_s|(\noise_u)_{u\leq s}),W(T-s,\cdot,\noise_s,\mathcal{L}(X_s|(\noise_u)_{u\leq s})))ds+\sqrt{2\sigma_\noise}B^\noise_t, \\
\end{array}
\right.
\end{equation*}
for $(B_s,B^\noise_s)_{s\geq 0}$ a Brownian motion on $\reels^{d+n}$, $(x_0,\noise_0)\in\reels^{d+n}$.
Letting $W_t=W(T-t,X_t,\noise_t)$ and $\mathcal{F}$ be the natural filtration associated to $(B_s,B^\noise_s)_{s\geq 0}$, there exists a square integrable $\mathcal{F}$-predictable process $(Z_s)_{s\in[0,T]}$ such that $\forall t\in[0,T]$
    \begin{equation}
    \label{eq: mean field FBSDE}
\left\{
\begin{array}{l}
   \displaystyle X_t=x_0-\int_0^t F(X_s,\noise_s,\mathcal{L}(X_s|(\noise_u)_{u\leq s}),W_s)ds+\sqrt{2\sigma_x}B_t,\\
   \displaystyle \noise_t=\noise_0-\int_0^t \tilde{b}(\noise_s,\mathcal{L}(X_s,W_s)|(\noise_u)_{u\leq s})ds+\sqrt{2\sigma_\noise}B^\noise_t, \\
   \displaystyle W_t=W_0(X_T,\noise_T,\mathcal{L}(X_T|(\noise_u)_{u\leq T}))+\int_t^T G(X_s,\noise_s,\mathcal{L}(X_s|(\noise_u)_{u\leq s}),W_s)ds-\int_t^T Z_s\cdot d(B_s,B^\noise_s),
\end{array}
\right.
\end{equation}
\end{lemma}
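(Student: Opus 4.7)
The plan is to exhibit the process $M_t := W(T-t, X_t, \noise_t, m_t) + \int_0^t G(X_s, \noise_s, m_s, W_s)\, ds$ (with $m_s = \mathcal{L}(X_s \mid (\noise_u)_{u \leq s})$) as a martingale with respect to the filtration $\mathcal{F}$ generated by $(B, B^\noise)$, and then apply the martingale representation theorem to obtain the predictable square-integrable process $(Z_s)_{s\in[0,T]}$. Rearranging the resulting decomposition between times $t$ and $T$, and using that $M_T = W_0(X_T, \noise_T, m_T) + \int_0^T G_s\, ds$ since $W(0, \cdot) = W_0$, yields exactly the stated BSDE \eqref{eq: mean field FBSDE}.

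The martingale property of $M$ is a direct consequence of the dynamic programming principle for Lipschitz solutions, in the spirit of Lemma \ref{prop: representation of lipschitz solutions} adapted to equation \eqref{eq: ME W2 Lip}: for every $s \leq t \leq T$ and every $(x, \noise, \mu)$,
\[
W(T-s, x, \noise, \mu) = \esp{W(T-t, \tilde{X}_{t-s}, \tilde{\noise}_{t-s}, \tilde{m}_{t-s}) + \int_s^t G(\tilde{X}_{r-s}, \tilde{\noise}_{r-s}, \tilde{m}_{r-s}, W(T-r, \tilde{X}_{r-s}, \tilde{\noise}_{r-s}, \tilde{m}_{r-s}))\, dr},
\]
where $(\tilde{X}, \tilde{\noise}, \tilde{m})$ is the characteristic SDE--SPDE system started from $(x, \noise, \mu)$ at time $s$. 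Evaluating this identity at $(x, \noise, \mu) = (X_s, \noise_s, m_s)$ and conditioning on $\mathcal{F}_s$, pathwise uniqueness of the SDE--SPDE \cite{probabilistic-mfg} identifies $\tilde{X}_{r-s} = X_r$, $\tilde{\noise}_{r-s} = \noise_r$, $\tilde{m}_{r-s} = m_r$ almost surely for $r \in [s,t]$, and one reads off $\espcond{M_t}{\mathcal{F}_s} = M_s$. Square integrability of $M$ on $[0,T]$ follows from the Lipschitz and linear growth estimates on $W$ and $G$ guaranteed by Hypothesis \ref{hyp: Lipschitz W2} together with standard moment bounds on $(X_t, \noise_t)$.

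The main obstacle is the justification of this flow/uniqueness step in presence of the functional drift $b[W]$: the drift of $(\noise_t)$ depends on $W(T-t, \cdot, \noise_t, m_t)$ as an element of $C(\reels^d, \reels^d)$, which is not directly covered by classical McKean--Vlasov well-posedness. The structural assumption $b(\noise, \mu, f(\cdot)) = \tilde{b}(\noise, (\mathrm{id}, f)_\# \mu)$ is precisely what reduces the drift to a continuous function of the joint conditional law of $(X_t, W_t)$, so that the coupled system for $(X_t, \noise_t, m_t)$ becomes a genuine McKean--Vlasov system with Lipschitz coefficients in $(\reels^d \times \reels^n, \mathcal{W}_2)$, for which pathwise uniqueness in the conditional sense holds. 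Once this is secured, the martingale representation theorem applied to $M$ provides $(Z_s)$, and a formal sanity check via Itô's formula on the smooth approximations $W^\varepsilon$ (obtained by mollifying the data as in Step 3 of Lemma \ref{Lemma: gradient U=W}) shows that in the smooth case $Z_s$ is given explicitly by $(\sqrt{2\sigma_x}\, \nabla_x W_s,\sqrt{2\sigma_\noise}\, \nabla_\noise W_s)$, confirming the computation.
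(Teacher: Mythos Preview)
Your proof is correct and follows the same approach as the paper: define $M_t = W(T-t, X_t, \noise_t, m_t) + \int_0^t G_s\, ds$, show it is a square-integrable $\mathcal{F}$-martingale via the dynamic programming principle of Lemma \ref{prop: representation of lipschitz solutions} together with Gr\"onwall moment bounds, then apply the martingale representation theorem and rearrange between $t$ and $T$. One small clarification: the structural assumption on $b$ is not actually needed for well-posedness of the forward system (since $W$ is a fixed Lipschitz function, $b[W]$ is already Lipschitz in $(\noise,m)$ by Hypothesis~\ref{hyp: Lipschitz W2}), but rather so that the drift of $\noise$ can be rewritten in the closed form $\tilde b(\noise_s,\mathcal{L}((X_s,W_s)\mid(\noise_u)_{u\le s}))$ that appears in the stated FBSDE~\eqref{eq: mean field FBSDE}.
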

\begin{proof}
    Since $W$ is Lipschitz on $[0,T]$, $(X_s,\noise_s)_{s\in[0,T]}$ is well-defined (see \cite{Carmona-major-minor}). Let us also recall that following the proof of Proposition \ref{prop: representation of lipschitz solutions} the stochastic process $(M_s)_{s\in[0,T]}$ defined by 
    \[M_t= W(T-t,X_t,\noise_t,\mathcal{L}(X_t|(\noise_u)_{u\leq t}))+\int_0^t G(X_s,\noise_s,\mathcal{L}(X_s|(\noise_u)_{u\leq s}),W(t-s,X_s,\noise_s,\mathcal{L}(X_s|(\noise_u)_{u\leq s})))ds,\]
    is a $\mathcal{F}-$martingale. Grönwall's Lemma yields estimates on the second moments of $(X_s,\noise_s)_{s\in[0,T]}$ and so $(M_s)_{s\in[0,T]}$ is a square integrable martingale. By the martingale representation theorem, there exists $(Z_s)_{s\in[0,T]}$, a square integrable, predictable, $\mathcal{F}-$adapted process such that 
    \[\forall t\leq T\quad  M_t=M_0+\int_0^t Z_s\cdot d(B_s,W_s).\]
    Using the equality 
    \[M_T=M_t+\int_t^TZ_sd(B_s,W_s),\]
    for this process $(Z_s)_{s\in[0,T]}$, we get exactly 
    \[W_t=W_0(X_T,\noise_T,\mathcal{L}(X_T|(\noise_u)_{u\leq T}))+\int_t^T G(X_s,\noise_s,\mathcal{L}(X_s|(\noise_u)_{u\leq s}),W_s)ds-\int_t^T Z_sd(B_s,W_s),\]
    which ends the proof. 
\end{proof}
\begin{remarque}
Local solvability of forward-backward stochastic systems with Lipschitz coefficients is already well known, Lipschitz solutions appear as another way to prove this result by showing directly the existence of a Lipschitz decoupling field. In the particular case of mean field games, the stochastic forward backward differential system \eqref{eq: mean field FBSDE} corresponds to the optimality conditions given by Pontryagin maximum principle.
\end{remarque}
 Many extensions and adaptations of Theorem \ref{thm: existence b(p,m) W2} are also possible. In particular, strong monotonicity in the state variable in \eqref{hyp: (F,G) b(p,m) W2} can always be traded for strong monotonicity in $W$ as in \cite{Lions-college,noise-add-variable,disp-monotone-2}. We focus rather on the choice of a function $V$ to complete \eqref{eq: ME W2 Lip}. In the above section, we explored the most simple case of a linear function of $\theta$. We now present how this idea can be extended to more general functions, leading to different monotonicity conditions. A simple general choice is to take 
 \[V(t,x,\noise,\mu)=A_\noise \noise+A_W W(t,x,\noise,\mu)+A_x x,\]
 for some matrix $(A_x,A_W)\in \left(\mathcal{M}_{d\times n}(\reels)\right)^2, A_\theta \in \mathcal{M}_n(\reels)$. In which case $V$ satisfy \eqref{def: V for b(m,p)} (in the sense of Lipschitz solution) for 
 \[G_V\equiv A_x F+A_W G+A_\noise b.\]
 
Since both $G_V$ and $V$ are explicit, Hypothesis \ref{hyp: (G,F,w0) b(p) W2} can be checked easily for the couple $(W,V)$.  A notable extension being the situation 
\[V=A_W W,\]
which corresponds, up to rescaling of $W$ by a $d\times d$ matrix $M$ (which is always possible as the general form of the equation is conserved by this linear shift), to $G-$monotonicity for mean field forward backward differential equations \cite{G-monotonicity,mean-field_G-monotonicity}.  
\begin{remarque}
    Different choice of functions $V$ are also possible in the results we presented in Section \ref{section LL monotone solutions}. The proof can be adapted to consider a more general function $V:\reels^+\times\reels^n\times \mathcal{P}_1(\tor^d)\to \reels^n$ instead of $\noise\mapsto A\noise$ for $V$ Lipschitz solution to  
\begin{gather*}\partial_t V(t,m,\noise)+b\cdot\nabla_\noise V-\sigma_\noise \Delta_\noise V\\
+\int_{\tor^d} \nabla_pH\cdot D_mV m(dy)-\sigma_x \int_{\tor^d}\text{div}_y\left( D_mV\right)m(dy)=R(\noise,m,\psi,U),
\end{gather*}
provided a monotonicity estimate \textit{à la} \eqref{LL monotinicity + in p U0} is satisfied for $(U_0,V)$ and $(b,R,H,f)$. 
\end{remarque}

\section{Presence of additive common noise}
 \label{section common noise}
 In this section we make the link between common noise through a noise process impacting the coefficients of the game as presented in the master equation \eqref{eq: general ME} and the more usual additive common noise that has already been widely studied. In particular, this will help us show how every result presented in this paper extend to the presence of a Brownian additive common noise $(B^c_t)_{t\geq 0}$ even when it is correlated with the Brownian motion driving the noise process $(B^\noise_t)_{t\geq 0}$.

 \subsection{From additive common noise to noise as an additional variable }
 Consider the following master equation associated to a mean field game with common noise 
 \begin{equation}
 \label{eq:ME with common noise}
\left\{
\begin{array}{c}
\displaystyle -\partial_t \mathcal{U}-(\sigma_x+\beta)\Delta_x \mathcal{U}+H(x,m,D_x\mathcal{U})\\
\displaystyle-(\sigma_x+\beta) \int \text{div}_{y}[D_m\mathcal{U}](t,x,m,y)]m(dy)+\int D_m \mathcal{U}\cdot D_p Hdm\\
\displaystyle -2\beta \int \text{div}_{x}[D_m\mathcal{U}](t,x,m,y)]m(dy)-\beta \int \text{Tr}\left[D^2_{mm}U(t,x,m,y,y')\right]m(dy)m(dy')\\
\text{ for } (t,x,m) \in (0,T)\times \reels^d\times  \mathcal{P}(\reels^d),\\
\text{ with }\mathcal{U}(T,x,m)=\mathcal{U}_0(x,m),
\end{array}
\right.
\end{equation}
Due to the presence of additive common noise, a second order term in the measure argument appears in the equation. Strong assumptions on the data are usually necessary to be able to define a solution to the problem twice differentiable in the measure argument \cite{convergence-problem}. However, we claim that by adding another variable to the equation it is possible to rewrite this equation without second order derivatives in the measure argument. For that, let us first come back to the game representation of a mean field game associated to \eqref{eq:ME with common noise}. Given the distribution of players at all times $(\mu_t)_{t\in[0,T]}$, each player solves the following optimal control problem 
\begin{gather*}
\underset{\alpha}{\sup} \quad \esp{\mathcal{U}_0(X^\alpha_T,\mu_T)-\int_0^T L(X_s^\alpha,\alpha_s,\mu_s)ds},\\
dX_t^\alpha=-h(X_t^\alpha,\alpha_t,\mu_t) dt+\sqrt{2\sigma_x} dW_t+\sqrt{2\beta} dB^c_t,
\end{gather*}
for $(W_s)_{s\geq 0}$ a Brownian motion independent of $(B^c_s)_{s\geq 0}$. By analogy with the previous sections let us define
\[\noise_t=\sqrt{2\beta} B^c_t.\]
Introducing $Y_t^\alpha=X^\alpha_t-\noise_t$, for any control $\alpha$ we observe that
\[
\begin{array}{c}
     dY^\alpha_t=-h(Y^\alpha_t+\noise_t,\alpha_t,\mu_t)dt+\sqrt{2\sigma_x} dW_t , \\
     d\noise_t=\sqrt{2\beta} dB^c_t.\\
\end{array}
\]
This simple computation allows to rewrite the optimal control problem in function of the controlled process $(Y_t)_{t\in[0,T]}$ and the noise process $(\noise_t)_{t\in[0,T]}$. By definition of $(\noise_t)_{t\in[0,T]}$ it is progressively measurable with respect to the filtration associated to the common noise, as such for any control $\alpha$
\[\mathcal{L}\left(X^\alpha_t|(B^c_s)_{s\in[0,t]}\right)=(id_{\reels^d}+\noise_t)_\#\mathcal{L}\left(Y^\alpha_t|(B^c_s)_{s\in[0,t]}\right).\]
Using the characterisation of the mean field equilibrium, this heuristic hints at the idea of studying 
\[V(t,y,\noise,m)=\mathcal{U}(t,y+\noise,(id_{\reels^d}+\noise)_\#m).\]
Indeed, we observe that if $\mathcal{U}$ is a smooth solution of \eqref{eq:ME with common noise} then $V$ solves 
\begin{equation}
\label{eq: ME common noise transform}
\left\{
\begin{array}{c}
\displaystyle -\partial_t V-\sigma_x\Delta_y V-\beta \Delta_\noise V+H(y+\noise,(id_{\reels^d}+\noise)_\#m,D_yV)\\
\displaystyle-\sigma_x \int \text{div}_{z}[D_mV](t,y,\noise,m,z)]m(dz)+\int D_m V(t,y,\noise,m,z)\cdot D_p H(z+\noise,(id_{\reels^d}+\noise)_\#m,\nabla_y V)m(dz)\\
\text{ for } (t,y,\noise,m) \in (0,T)\times \reels^d\times \reels^d\times \mathcal{P}(\reels^d),\\
\text{ with }V(T,y,\noise,m)=\mathcal{U}_0(y+\noise,(id_{\reels^d}+\noise)_\#m),
\end{array}
\right.
\end{equation}
which is exactly an equation of the same form as \eqref{eq: general ME}, with the new hamiltonian 
\[\tilde{H}(y,\noise,m,p)=H(y+\noise,(id_{\reels^d}+\noise)_\#m,p).\]
\begin{remarque}
Observe that monotone functions are left monotone through this transformation. Indeed, assume that $U:\reels^d\times\mathcal{P}_1(\reels^d)\to \reels$ is flat monotone, then for any $(\mu,\nu)\in \left(\mathcal{P}_1(\reels^d)\right)^2$
\begin{align*}
   &\langle U(\cdot+\theta,(id_{\reels^d}+\noise)_\#\mu)-U(\cdot+\theta,(id_{\reels^d}+\noise)_\#\nu),\mu-\nu\rangle\\
   =&\langle U(\cdot,(id_{\reels^d}+\noise)_\#\mu)-U(\cdot,(id_{\reels^d}+\noise)_\#\nu),(id_{\reels^d}+\noise)_\#\mu-(id_{\reels^d}+\noise)_\#\nu\rangle\geq 0.
\end{align*}
A simple calculation shows that this is also true for $L^2-$monotone functions. This is not the first time transformations of this kind are used in mean field games, in fact this has become a somewhat classic argument in the study of the stochastic forward backward mean field game system, see \cite{convergence-problem}) in which it was observed that a similar transformation leads to a system with a forward stochastic (Fokker–Planck) equation with finite variation. 
\end{remarque}
We believe such a transformation is particularly suited for the study of weak solutions to the master equation, in particular those based on comparison argument as in \cite{bertucci-monotone} or this work, as this bypass the need to tackle second order derivatives on the space of measures. Even for smooth solution, it is relevant.  Indeed, assuming that 
\[\noise\mapsto f((id_{\reels^d}+\noise)_\#m),\]
is a $C^2$ function is weaker than the existence of a second derivative in the measure argument, in this way we may define solutions of mean field games that are smooth in the sense that \eqref{eq: ME common noise transform} is satisfied without needing the existence of a smooth second derivative in the measure argument. For a solution $\mathcal{U}$ that is in fact smooth in the measure argument
\begin{gather*}
\Delta_\noise \mathcal{U}(t,x+\noise,(id_{\reels^d}+\noise)_\#m)|_{\noise=0}\\
=\Delta_x \mathcal{U}(t,y,m)+\int_{\reels^d}\text{Tr}\left[D^2_{mm}\mathcal{U}\right](t,x,m,y,y')m(dy)m(dy')+2\int_{\reels^d}\text{div}_x\left[D_m\mathcal{U}\right](t,x,m,y)m(dy).
\end{gather*}
Obviously if all derivatives are smooth then $\Delta_\noise \mathcal{U}$ is well-defined, the observation we are here making is just that for it to be well-defined we do not need to be able to make sense of each of the integrated term on their own.
\begin{exemple}
Consider the average distance between two players
    \[U(m)=\int_{\reels^{2d}}|y-y'|m(dy)m(dy').\]
For measures in $\mathcal{P}_1(\reels^d)$, $U$ is well-defined and continuous (it is even Lipschitz in $\mathcal{W}_2$). However, it does not admit a continuous Wasserstein derivative $y\mapsto D_m U(m,y)$ at every point (for example for $m=\delta_x$ for some $x\in \reels^d$), and much less a continuous second order derivative with respect to measures. Nevertheless, $\theta\mapsto U((id_{\reels^d}+\theta)_\#m)$ belongs to $C^\infty$ and
\[\Delta_\noise  U((id_{\reels^d}+\noise)_\#m)|_{\theta=0}\equiv 0.\]
\end{exemple}

\subsection{Application to Lipschitz solutions}
 Let us now detail how to adapt this idea to Lipschitz solutions. Firstly, let us mention that Lipschitz solutions are easily adapted to the presence of additive common noise. We refer to the original paper \cite{lipschitz-sol} for the presentation of such results in details. In the context of the nonlinear transport equation
 \begin{equation}
 \label{eq: Lip sol Pq common noise}
\left\{
\begin{array}{c}
\displaystyle\partial_t W+F(x,m,W)\cdot \nabla_x W-(\sigma_x+\beta) \Delta_x W\\
\displaystyle +\int_{\reels^d} F(y,m,W)\cdot D_m W(t,x,m)(y)m(dy)-\sigma_x\int_{\reels^d} \text{div}_y (D_mW(t,x,m)(y))m(dy)\\
\displaystyle -2\beta \int \text{div}_{x}[D_mW ](t,x,m,y)]m(dy)-\beta \int \text{Tr}\left[D^2_{mm}W(t,x,m,y,y')\right]m(dy)m(dy')\\
\displaystyle =G(x,m,W) \text{ in } (0,T)\times\reels^d\times\mathcal{P}_q(\reels^d),\\
\displaystyle W(0,x,m)=W_0(x,m) \text{ for } (x,m)\in\reels^d\times\mathcal{P}_q(\reels^d).
 \end{array}
 \right.
\end{equation}
the representation formula for Lipschitz solution becomes
\begin{gather}
\label{Def: Lipschitz solution common noise}
\nonumber \displaystyle W(t,x,\mu)=\esp{W_0(X_t,m_t)+\int_0^t G(X_s,m_s,W(t-s,X_s,m_s))ds},\\
\displaystyle dX_s=-F(X_s,m_s,W(t-s,X_s,m_s))ds+\sqrt{2\sigma_x}dB_s+\sqrt{2\beta}dB^c_s \quad X_0=x,\\
\nonumber dm_s=\left(-\text{div}\left(F(x,m_s,W(t-s,x,m_s))m_s\right)+(\sigma_x+\beta) \Delta_x m_s\right)ds-\text{div}\left(\sqrt{2\beta}m_sdB^c_s\right) \quad m_0=\mu,
\end{gather}
for two independent Brownian motions $(B_s)_{s\geq 0}$ and $(B^c_s)_{s\geq 0}$. We make the heuristic presented above rigorous directly at the level of the nonlinear transport equation \eqref{eq: Lip sol Pq common noise} which includes \eqref{eq:ME with common noise} as a particular case.
\begin{lemma}
\label{lemma: equiv additive/ add variable}
    Let $W$ be a Lipschitz solution of 
    \begin{equation*}
\left\{
\begin{array}{c}
\displaystyle \partial_t W-\sigma_x\Delta_x W-\beta \Delta_\noise W+F(x+\noise,(id_{\reels^d}+\noise)_\#m,W)\cdot \nabla_x W\\
\displaystyle-\sigma_x \int \text{div}_{y}[D_mW](t,x,\noise,m,y)]m(dy)+\int D_m W(t,x,\noise,m,y)\cdot F(y+\noise,(id_{\reels^d}+\noise)_\#m,W)m(dy)\\
=G(x+\noise,(id_{\reels^d}+\noise)_\#m,W)
\text{ in } (0,T)\times \reels^d\times \reels^d\times \mathcal{P}_q(\reels^d),\\
\text{ with }W(0,x,\noise,m)=W_0(x+\noise,(id_{\reels^d}+\noise)_\#m),
\end{array}
\right.
\end{equation*}
    on $[0,T)$, then
    \begin{enumerate}
        \item[-] $\forall (t,x,m)\in[0,T)\times\reels^d\times \mathcal{P}_q(\reels^d), $
        \[\noise\mapsto W(t,x-\noise,\noise,(id_{\reels^d}-\noise)_\#m)\]
        is a constant function.
        \item[-] $W_c:(t,x,m)\mapsto W(t,x,0,m)$ is a Lipschitz solution of \eqref{eq: Lip sol Pq common noise} on $[0,T)$.
    \end{enumerate}
\end{lemma}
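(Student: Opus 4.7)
The strategy is to reduce both claims to the uniqueness of Lipschitz solutions of the equation satisfied by $W$. On a short time interval $[0,\tau)$ on which Lipschitz solutions to the common-noise equation \eqref{eq: Lip sol Pq common noise} exist and are unique (by the analog of Theorem \ref{lip sol b(p)} in the common-noise setting, whose representation formula is \eqref{Def: Lipschitz solution common noise}), let $V(t,x,m)$ denote the unique Lipschitz solution of \eqref{eq: Lip sol Pq common noise} with data $(F,G,W_0)$. Define the lift
\[\hat W(t,x,\noise,m):=V(t,x+\noise,(id_{\reels^d}+\noise)_\# m).\]
If one shows that $\hat W$ is a Lipschitz solution of the equation of the lemma on $[0,\tau)$ with the same data as $W$, then by uniqueness $\hat W\equiv W$, and both conclusions follow at once: on the one hand
\[W(t,x-\noise,\noise,(id-\noise)_\# m)=V(t,x,m),\]
which is independent of $\noise$, giving the first claim; on the other hand $W_c(t,x,m)=W(t,x,0,m)=V(t,x,m)$ is a Lipschitz solution of \eqref{eq: Lip sol Pq common noise}, giving the second.

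The verification that $\hat W$ satisfies the fixed-point formula characterizing Lipschitz solutions of the equation of the lemma proceeds by launching its characteristics $(\bar X_s,\bar\noise_s,\bar m_s)$ from $(x,\noise,m)$ and setting $Y_s:=\bar X_s+\bar\noise_s$, $\mu_s:=(id+\bar\noise_s)_\# \bar m_s$, so that $Y_0=x+\noise$ and $\mu_0=(id+\noise)_\# m$ are independent of $\noise$. A direct application of It\^o's formula to $Y_s$, combined with It\^o's formula for the pushforward measure (using that $\bar\noise_s$ carries the Brownian motion $\sqrt{2\beta}\,dB^\noise_s$ and that $\bar m_s$ is driftless in $\omega$), yields
\begin{align*}
dY_s&=-F(Y_s,\mu_s,V(t-s,Y_s,\mu_s))\,ds+\sqrt{2\sigma_x}\,dB_s+\sqrt{2\beta}\,dB^\noise_s,\\
d\mu_s&=\bigl(-\mathrm{div}(F(\cdot,\mu_s,V(t-s,\cdot,\mu_s))\mu_s)+(\sigma_x+\beta)\Delta\mu_s\bigr)ds-\mathrm{div}\bigl(\sqrt{2\beta}\mu_s\,dB^\noise_s\bigr),
\end{align*}
where we used the identity $\hat W(t-s,\bar X_s,\bar\noise_s,\bar m_s)=V(t-s,Y_s,\mu_s)$ that is built into the definition of $\hat W$. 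These are precisely the characteristics associated to $V$ as a Lipschitz solution of \eqref{eq: Lip sol Pq common noise}, with $B^\noise$ playing the role of the additive common noise, so the representation formula for $V$ gives
\[\mathbb{E}\!\left[W_0(Y_t,\mu_t)+\int_0^t G(Y_s,\mu_s,V(t-s,Y_s,\mu_s))\,ds\right]=V(t,x+\noise,(id+\noise)_\# m)=\hat W(t,x,\noise,m),\]
which is exactly the fixed-point identity required of $\hat W$ as a Lipschitz solution.

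Uniqueness on $[0,\tau)$ then closes the argument on that interval, and extension to $[0,T)$ is a standard bootstrapping: the identity $W_c=V$ propagates the Lipschitz bounds on $W$ to $V$, so the interval of existence of $V$ is at least that of $W$ and one can restart the procedure at any intermediate time. The main obstacle is the It\^o-type calculation for the pushforward measure $\mu_s=(id+\bar\noise_s)_\# \bar m_s$: one must carefully identify the additional $\beta\Delta\mu_s$ term coming from the quadratic variation of $\bar\noise_s$ pushed through, and the stochastic common-noise transport $-\mathrm{div}(\sqrt{2\beta}\mu_s\,dB^\noise_s)$ coming from the first-order $d\bar\noise_s$ contribution. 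This computation is essentially the reverse of the informal derivation sketched at the beginning of Section \ref{section common noise} and is well documented in the common-noise MFG literature, e.g.\ \cite{convergence-problem}; everything else is a routine consequence of the Lipschitz-solution framework already developed in Section \ref{section Lipschitz solutions}.
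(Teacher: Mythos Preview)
Your argument is correct and rests on the same core change-of-variables computation as the paper's, but you run it in the opposite direction. The paper never introduces the common-noise solution $V$ as a starting point: it works entirely with $W$, writes the fixed-point representation of $W(t,y-\noise,\noise,(id-\noise)_\#\mu)$, shifts the characteristics by $\noise$, and observes that both $(t,y,\bar\noise,m)\mapsto W(t,y,\bar\noise,m)$ and $(t,y,\bar\noise,m)\mapsto W(t,y-\noise,\bar\noise+\noise,(id-\noise)_\#m)$ are fixed points of the \emph{same} additional-variable operator, hence equal by uniqueness of Lipschitz solutions. The second claim is then derived from the first by plugging the invariance back into the representation formula and recognizing the stochastic Fokker--Planck for $(id+\noise_s)_\#m_s$.

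Your route---take the local common-noise solution $V$, lift it to $\hat W$, check the fixed-point identity for $\hat W$ via the same pushforward computation, and conclude $\hat W=W$---is arguably more transparent because the object $\hat W$ makes the invariance manifest from the outset. The price you pay is the bootstrapping step: you must first ensure $V$ exists on a short interval, identify $V=W_c$ there, and then use the Lipschitz bounds inherited from $W$ to push $V$ all the way to $T$. The paper's approach avoids this entirely since it never needs $V$ to exist a priori; uniqueness is invoked only for the additional-variable equation, on which $W$ is already given globally. Both arguments are valid and of comparable length; yours packages the structure more cleanly, the paper's is more self-contained.
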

\begin{proof}
The first claim is quite straightforward to prove, as it is just a simple matter of rewriting the system satisfied by $(X_s,\noise_s,m_s)_{s\in[0,t]}$. Indeed, for any $(t,\noise,y,\mu)$,
\[
\begin{array}{c}
\displaystyle W(t,y-\noise,\noise,(id_{\reels^d}-p)_\#\mu)=\esp{W_0(X_t+\noise_t,(id_{\reels^d}+\noise_t)_\#m_t)}\\
 \quad \quad\quad \quad \quad\quad \quad \quad\quad \quad \quad\quad \quad \quad\quad \quad \quad \displaystyle+\esp{\int_0^t G(X_s+\noise_s,(id_{\reels^d}+\noise_s)_\#m_s,W_s)ds},\\
\\
\displaystyle dX_s=-F(X_s+\noise_s,(id_{\reels^d}+\noise_s)_\#m_s,W_s)ds+\sqrt{2\sigma_x}dB_s \quad X_0=y-\noise,\\
\displaystyle d\noise_s=\sqrt{2\beta}dB^\noise_s \quad \noise_0=\noise,\\
dm_s=\left(-\text{div}\left(F(x+\noise_s,(id_{\reels^d}+\noise_s)_\#m_s,W(t-s,x,\noise_s,m_s))m_s\right)+\sigma_x \Delta_x m_s\right)ds \quad m_0=(id_{\reels^d}-\noise)_\#\mu,\\
W_s=W(t-s,X_s,\noise_s,m_s).
\end{array}
\]
By rewriting this system in function of $(X_s+\noise)_{s\geq 0}$ and $(m_s\#(id_{\reels^d}+\noise))_{s\geq 0}$ we get
\[
\begin{array}{c}
\displaystyle W(t,y-\noise,\noise,(id_{\reels^d}-\noise)_\#\mu)=\esp{ W_0(X_t+\noise_t,(id_{\reels^d}+\noise_t)_\#m_t)}\\
 \displaystyle\quad \quad\quad \quad \quad\quad \quad \quad\quad \quad \quad\quad \quad \quad\quad \quad \quad +\esp{\int_0^t G(X_s+\noise_s,(id_{\reels^d}+\noise_s)_\#m_s,W_s)ds},\\
\\
\displaystyle dX_s=-D_p H(X_s+\noise_s,(id_{\reels^d}+\noise_s)_\#m_s,W_s)ds+\sqrt{2\sigma_x}dB_s \quad X_0=y,\\
\displaystyle d\noise_s=\sqrt{2\beta}dB^\noise_s \quad \noise_0=0,\\
dm_s=\left(-\text{div}\left(D_p H(x+\noise_s,(id_{\reels^d}+\noise_s)_\#m_s,W(t-s,x-\noise,\noise_s+\noise,(id_{\reels^d}-\noise)_\#m_s))m_s\right)+\sigma_x \Delta_x m_s\right)ds,\\
m_0=\mu \quad W_s=W(t-s,X_s-\noise,\noise_s+\noise,(id_{\reels^d}-\noise)_\#m_s).
\end{array}
\]
Fixing $\noise$, let us observe that both $(t,y,\bar{\theta},m)\mapsto W(t,y-\noise,\bar{\noise}+\noise,(id_{\reels^d}-\noise)_\#m)$ and $(t,y,\bar{\noise},m)\mapsto W(t,y,\bar{\noise},m)$ are fixed points of this equation. By uniqueness of the fixed point for Lipschitz solution, we necessarily have 
\[\forall t<T, (y,m)\in \reels^d\times \mathcal{P}_q(\reels^d), \quad W(t,y,0,m)=W(t,y-\noise,\noise,(id_{\reels^d}-\noise)_\#m).\]
In particular, this implies that for any $t<T$, $(y,\noise,m)$
\[W(t,y,\noise,m)=W(t,y+\noise,0,(id_{\reels^d}+\noise)_\#m).\]
A result that we may apply directly to rewrite 
\[
\begin{array}{c}
\displaystyle W(t,y,0,\mu)=\esp{W_0(X_t,(id_{\reels^d}+\noise_t)_\#m_t)+\int_0^t G(X_s,(id_{\reels^d}+\noise_s)_\#m_s,W_s)ds},\\
\displaystyle dX_s=-F(X_s,(id_{\reels^d}+\noise_s)_\#m_s,W_s)ds+\sqrt{2\sigma_x}dB_s+\sqrt{2\beta}dB^\noise_s \quad X_0=y,\\
\displaystyle d\noise_s=\sqrt{2\beta}dB^\noise_s \quad \noise_0=0,\\
dm_s=\left(-\text{div}\left(F(x+\noise_s,(id_{\reels^d}+\noise_s)_\#m_s,W(t-s,x+\noise_s,0,(id_{\reels^d}+\noise_s)_\#m_s))m_s\right)+\sigma_x \Delta_x m_s\right)ds,\\
m_0=\mu \quad W_s=W(t-s,X_s,0,(id_{\reels^d}+\noise_s)_\#m_s).
\end{array}
\]
Since it is classical in MFG \cite{convergence-problem,lipschitz-sol} that under this definition,  $\tilde{m}_s=(id_{\reels^d}+\noise_s)_\#m_s$ is a solution of 
\[d\tilde{m}_s=\left(-\text{div}\left(D_p H(x,\tilde{m}_s,W(ft-s,x,\tilde{m}_s))m_s\right)+(\sigma_x+\beta) \Delta_x m_s\right)ds-\text{div}\left(\sqrt{2\beta}m_sdB^\noise_s\right) \quad \tilde{m}_0=\mu,\]
we conclude by uniqueness of Lipschitz solution that 
\[(t,x,m)\to W(t,x,0,m)\]
is indeed a Lipschitz solution of \eqref{eq: Lip sol Pq common noise}.
\end{proof}

We now explain how this Lemma can be used to show the addition of additive common noise in \eqref{eq: general ME} is non-consequential.
\begin{corol}
Under the same Hypothesis as in Theorem \ref{thm: existence b(p,m) W2} there exists a unique Lipschitz solution to the equation 
\begin{equation*}
\left\{
\begin{array}{c}
\displaystyle\partial_t W+F(x,\noise,m,W)\cdot \nabla_x W+b(\noise,m,W(t,\cdot,\noise,m))\cdot\nabla_\noise W-(\sigma_x+\beta) \Delta_x W-\sigma_\noise \Delta_\noise W\\
\displaystyle +\int_{\reels^d} F(y,\noise,m,W)\cdot D_m W(t,x,\noise,m)(y)m(dy)-\sigma_x\int_{\reels^d} \text{div}_y (D_mW(t,x,\noise,m)(y))m(dy)\\
\displaystyle -2\beta \int \text{div}_{x}[D_mW ](t,x,\noise,m,y)]m(dy)-\beta \int \text{Tr}\left[D^2_{mm}W(t,x,\noise,m,y,y')\right]m(dy)m(dy')\\
\displaystyle =G(x,\noise,m,W) \text{ in } (0,T)\times\reels^d\times\reels^m\times\mathcal{P}_2(\reels^d),\\
\displaystyle W(0,x,\noise,m)=W_0(x,\noise,m) \text{ for } (x,\noise,m)\in\reels^d\times\reels^m\times\mathcal{P}_2(\reels^d).
 \end{array}
 \right.
\end{equation*}
for any $T>0$.
\end{corol}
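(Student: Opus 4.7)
The strategy is to eliminate the additive common noise by a change of variable in the spirit of Lemma \ref{lemma: equiv additive/ add variable}, at the cost of introducing an auxiliary noise variable, and then invoke Theorem \ref{thm: existence b(p,m) W2} on the lifted equation. Concretely, I introduce a fresh variable $\theta' \in \reels^d$ (distinct from the endogenous $\theta$) and consider on $\reels^d\times\reels^n\times\reels^d\times\mathcal{P}_2(\reels^d)$ the auxiliary equation obtained from \eqref{eq: ME W2 Lip} by substituting the shifted data
\[
\tilde F(x,\theta,\theta',m,p)=F\bigl(x+\theta',\theta,(id_{\reels^d}+\theta')_\# m,p\bigr),\quad \tilde G(x,\theta,\theta',m,p)=G\bigl(x+\theta',\theta,(id_{\reels^d}+\theta')_\# m,p\bigr),
\]
with the analogous formulas for $\tilde W_0$ and a lifted drift $\tilde b$. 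The extended noise variable $(\theta,\theta')\in\reels^{n+d}$ carries drift $(b,0)$ and diffusion matrix $\mathrm{diag}(\sigma_\theta I_n,\beta I_d)$, so the auxiliary problem sits exactly in the framework of Section \ref{section L2 monotone solutions}.

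The first task is to transfer Hypothesis \ref{hyp: Lipschitz W2}: since the map $\theta'\mapsto (id_{\reels^d}+\theta')_\# m$ satisfies $\mathcal{W}_2\bigl((id+\theta')_\# m,(id+\bar\theta')_\# \nu\bigr)\leq |\theta'-\bar\theta'|+\mathcal{W}_2(m,\nu)$, all Lipschitz estimates are inherited up to an additive constant. The more delicate task is to verify the joint $L^2$-monotonicity \ref{hyp: (F,G,W_0) b(p,m) W2} for the lifted coefficients. I would choose the extended matrix $\tilde A=\mathrm{diag}(A,cI_d)$ with $c>0$ to be fixed and reduce the extended inequality to the original one via the substitution $\tilde X=X+\theta'$, $\tilde Y=Y+\bar\theta'$, $\tilde m=(id+\theta')_\# \mathcal{L}(X)$, $\tilde n=(id+\bar\theta')_\# \mathcal{L}(Y)$. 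The identity $X-Y=(\tilde X-\tilde Y)-(\theta'-\bar\theta')$ injects a cross-term $-\mathbb{E}[\tilde G^X-\tilde G^Y]\cdot(\theta'-\bar\theta')$ in the extended scalar product, which is split by Young's inequality: a fraction is absorbed into the $\min_t\|\cdot\|_{L^2}^2$ on the right-hand side (at the cost of halving $\alpha$), the rest into the quadratic form $\tfrac{c}{2}|\theta'-\bar\theta'|^2$ that appears once one takes $c\geq 1/\alpha$. The same calculation, applied to the initial condition \eqref{Hyp: W0 b(p,m) W2}, yields the lifted $\tilde W_0$ estimate.

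Once the hypotheses are verified, Theorem \ref{thm: existence b(p,m) W2} produces a unique global Lipschitz solution $\tilde W$ of the auxiliary equation on $[0,+\infty)$. The argument of Lemma \ref{lemma: equiv additive/ add variable}, transposed to the present situation (the additive common noise being carried by $\theta'$ while the endogenous $\theta$ plays a passive role in the lifting), shows that
\[
W(t,x,\theta,m):=\tilde W(t,x,\theta,0,m)
\]
is a Lipschitz solution of the equation in the corollary. Uniqueness is inherited from the Lipschitz solution framework (Theorem \ref{lip sol b(p)}), adapted in the standard way to include the additive common noise terms.

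The principal obstacle is the monotonicity transfer. The invariance remark following Lemma \ref{lemma: equiv additive/ add variable} shows that flat and $L^2$-monotonicity of a single function pass through the shift, but the joint monotonicity of \ref{hyp: (F,G,W_0) b(p,m) W2} couples $(F,G,W_0,b)$ in a non-trivial way and the shift produces genuinely new cross-terms in $\theta'-\bar\theta'$. Absorbing these via Young's inequality is elementary but requires a careful simultaneous tuning of the block $cI_d$ in $\tilde A$, of the constant $\alpha$, and of the auxiliary Lipschitz constants of $G$ in $p$; making this quantitative is the one step of the argument that genuinely departs from the autonomous case already handled in Section \ref{section L2 monotone solutions}.
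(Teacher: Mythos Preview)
Your overall strategy—lift the additive common noise to a new autonomous variable $\theta'$ via Lemma \ref{lemma: equiv additive/ add variable} and then appeal to the machinery of Section \ref{section L2 monotone solutions}—is the right one, and it is what the paper does. The gap is in how you propose to transfer the joint monotonicity condition \eqref{hyp: (F,G) b(p,m) W2}.

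You want to apply Theorem \ref{thm: existence b(p,m) W2} as a black box to the lifted equation with full noise variable $(\theta,\theta')\in\reels^{n+d}$ and matrix $\tilde A=\mathrm{diag}(A,cI_d)$. But the lifted drift is $\tilde b=(b,0)$: the $\theta'$-component is identically zero. Consequently, in the lifted version of \eqref{hyp: (F,G) b(p,m) W2} the term $(\tilde b^X-\tilde b^Y)\cdot\tilde A\bigl((\theta,\theta')-(\tilde\theta,\bar\theta')\bigr)$ reduces to $(b^X-b^Y)\cdot A(\theta-\tilde\theta)$ alone; the block $cI_d$ contributes nothing. So there is no $\tfrac{c}{2}|\theta'-\bar\theta'|^2$ available on the left-hand side of \eqref{hyp: (F,G) b(p,m) W2} to absorb the cross-term $-\mathbb{E}[\tilde G^X-\tilde G^Y]\cdot(\theta'-\bar\theta')$, and your Young's inequality split cannot be closed. (The block $cI_d$ does appear in the lifted \eqref{Hyp: W0 b(p,m) W2}, so your argument for $W_0$ is fine, but that is not where the difficulty lies.)

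The paper avoids this entirely by \emph{not} doubling $\theta'$: since its drift is autonomous (zero), it is handled as in Section \ref{subsection: autonomous W2}, where a single copy of the noise variable is used and both characteristics share the same $\theta^\beta_s$. Only the endogenous $\theta$ is doubled as in Section \ref{subsection: b(p,m) W2}. With a common shift $\theta^\beta$, one has $X-Y=\tilde X-\tilde Y$ exactly, no cross-term arises, and the monotonicity hypothesis for the lifted data at each fixed $\theta^\beta$ reduces to the original Hypothesis \ref{hyp: (F,G,W_0) b(p,m) W2} applied at the translated variables (this is the content of the invariance remark after Lemma \ref{lemma: equiv additive/ add variable}). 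One then reruns the argument of Lemma \ref{Z b(p,m) non negative} and Theorem \ref{thm: existence b(p,m) W2} with this mixed treatment, rather than invoking Theorem \ref{thm: existence b(p,m) W2} as a black box.
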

\begin{proof}
    Rather than working directly on the Lipschitz solution to this equation we may consider an equation with additional variables and without any additive common noise using Lemma \ref{lemma: equiv additive/ add variable}. The variable coming from common noise (say $\noise^\beta$) is treated like an autonomous noise process as in Section \ref{subsection: autonomous W2} and a doubling of variable ($\noise$,$\noiseb$) is made in the variable associated to the noise process $b$ only. From there on, there is no difficulty in extending the arguments of section \ref{subsection: b(p,m) W2}. 
\end{proof}
Obviously, this is also true for all other long time existence results presented in this paper, in particular Theorem \ref{thm: existence b(p,m) W1}.
\begin{remarque}
    This stays true even when the Brownian motion associated to the common noise $(B^c_s)_{s\geq 0}$ is correlated with the common noise from the noise process $(B^\noise_s)_{s\geq 0}$. Indeed, the only change in this case is that there is now a cross derivatives term 
    \[-\text{Tr}\left(\Gamma D^2_{(\noise^\beta,\noise,\noiseb)}W\right),\]
    with a matrix $\Gamma$ of the form 
    \[
    \left(
\begin{array}{ccc}
     \Gamma_\beta& \Gamma_{cross} & \Gamma_{cross}  \\
     \Gamma_{cross} & \Gamma_\noise &\Gamma_\noise\\
     \Gamma_{cross} & \Gamma_\noise &\Gamma_\noise\\
\end{array}
\right)
\geq 0.
\]
At a point of minimum this term with cross derivatives has a sign, and so there is no further difficulty in applying the maximum principle. 
\end{remarque}
\section*{acknowledgements}
\begin{center}
The two authors acknowledge a partial support from the chair FDD (Institut Louis Bachelier)
\end{center}
\nocite{*}
\printbibliography

\end{document}